\documentclass[11pt]{article}
\usepackage{amsmath,amsthm,amssymb}
\usepackage[toc,page]{appendix} 
\usepackage{mathtools}
\usepackage{stmaryrd}
\usepackage[colorlinks]{hyperref}
\usepackage[T1]{fontenc}
\usepackage[twoside]{geometry}
\usepackage{graphics}
\usepackage{booktabs} 
\headheight = -12pt



\newtheorem{theorem}{Theorem}[section]

\newtheorem{lemma}[theorem]{Lemma}
\newtheorem{corollary}[theorem]{Corollary}

\newtheorem{remark}[theorem]{Remark} 
 
\numberwithin{equation}{section}

\usepackage{amssymb, amscd, mathrsfs, wasysym,bbm} 
\usepackage{wasysym}

\newcommand \bei {\begin{itemize}}
\newcommand \eei {\end{itemize}}
\newcommand \be {\begin{equation}}
\newcommand \bel {\begin{equation}\label}
\newcommand \ee {\end{equation}}
\newcommand \del \partial
\newcommand \eps \epsilon

\newcommand \SL {\mathcal S}
\newcommand \SLO {\mathcal V}
\newcommand \DL {\mathcal D}
\newcommand \DLO {\mathcal K}
\newcommand \jump[1]{\llbracket #1 \rrbracket}

\newcommand{\Hh}{H^{\frac12}}
\newcommand{\Hnh}{H^{-\frac12}}

\newcommand{\Vn}[1]{V_0(#1)^3}
\newcommand{\Vns}[1]{V_0(#1)}

\newcommand{\id}{{\normalfont\hbox{1\kern-0.15em \vrule width.8pt depth-.5pt}}}
\newcommand{\ra}[1]{\renewcommand{\arraystretch}{#1}}

%

\begin{document}

\title{\bf \Large Boundary integral equations for  isotropic linear elasticity} 
\author{Benjamin Stamm$^1$, Shuyang Xiang$^1$
\\
{\footnotesize $^1$ MATHCCES, Department of Mathematics, RWTH Aachen University, Schinkelstrasse 2, D-52062 Aachen, Germany}
}
\maketitle
\begin{abstract}
This articles first investigates boundary integral operators for the three-dimensional isotropic linear elasticity of a biphasic model with piecewise constant Lam\'e coefficients in the form of a bounded domain of arbitrary shape surrounded by a background material.
In the simple case of a spherical inclusion, the vector spherical harmonics consist of eigenfunctions of the single and double layer boundary operators and we provide their spectra. 
Further, in the case of many spherical inclusions with isotropic materials, each with its own set of Lam\'e parameters, we propose an integral equation and a subsequent Galerkin discretization using the vector spherical harmonics and apply the discretization to several numerical test cases. 
\end{abstract}

\section{Introduction}
 We consider  three-dimensional boundary value or interface problems of the isotropic elasticity equation related to the following operator: 
\bel{elasticity}
{\bf L} u  :=  -  \mathop{\rm div} \Big(2\mu e(u)+  \lambda\mathop{\rm Tr} \big(e( u)\big) \mathop{\rm Id } \Big), 
\ee
where 
the strain tensor reads $e(u) = {\frac12}(\nabla u+\nabla u^\top)$. It is obvious to see that the operator ${\bf L}$ is self-adjoint on $L^2(\mathbb R^3)^3$. 

In the definition of the operator \eqref{elasticity},  $\mu, \lambda\in  \mathbb R, \mu>0, 2\mu+3\lambda>0$ are the so-called (constant) Lam\'e parameters. The parameter $\mu$ denotes the shear modulus which describes the tendency of the object to deform at a constant volume when being imposed with opposing forces. The other Lam\'e parameter $\lambda$ has no physical meanings but is introduced to simplify the definition of the operator \eqref{elasticity}.  Indeed, it is related to the bulk modulus $K$ through the relation 
\[
\lambda = K - \frac{2}{3} \mu,
\]
where  the bulk modulus $K$  represents the object's tendency to deform in all directions when acted on by opposing force from all directions. We refer to~\cite{P-S} for more detailed descriptions of the Lam\'e parameters. It is sometimes useful to introduce Poisson's ratio $\nu$ which is defined by 
\bel{Poisson-ratio}
\nu = {\lambda\over 2(\mu+\lambda)},
\ee
and whose admissible range is $(-1,1/2)$. The material is extremely compressible in the limit $\nu\to-1$ while extremely incompressible in the other limit $\nu\to 1/2$~\cite{PHCM}.

A model of linear elasticity with appropriate boundary conditions can be approximated by the classic finite element method, see for example~\cite{Leroy, falk} just to name a few contributions from an abundant body of literature, for the general case with non-homogeneous source term. 
On the other hand, displacement fields $u$ being homogeneous solutions, i.e., ${\bf L} u =0$ within a given domain,  can also be represented by isotropic elastic potentials~\cite{BUI,KUPR} and elasticity in piecewise constant isotropic media can then be treated as integral equations for specified interface conditions. 
At the origin of the integral formulation lies the definitions of layer potentials and their corresponding integral operators~\cite{BUI, sauter2010boundary} based on the Green's function~\cite{allan-bower} in the context of the isotropic linear elasticity. 

In particular, on a unit sphere, one can introduce the vector spherical harmonics forming an orthonormal basis of $[L^2(\mathbb S^2)]^3$ and which are eigenfunctions of the corresponding double and single layer boundary operators based on the Green's function~\cite{allan-bower} of isotropic linear elasticity.
The vector spherical harmonics were introduced in~\cite{Hill,weinberg} as an extension of the scalar spherical harmonics~\cite{MacRobert,hobson} to the vectorial case. 
They were further used in the discretization of different physical models such as the Navier-Stokes equations~\cite{Corona-Veera} or Maxwell's equations~\cite{EELL, BEG}. 
However, they are not widely used and only sparely reported in literature, in particular in the context of isotropic elasticity. We demonstrate in this article that the corresponding integral operators have interesting spectral properties which can be made explicit by employing the vector spherical harmonics.

Our main motivation for this work is the derivation of an integral equation to model elastic materials represented by piecewise constant Lam\'e constants with spherical inclusions following similar principles that were presented in \cite{ManyBodyPolTheory,PART1,PART2} in the case of scalar diffusion.
The particular choice of the vector spherical harmonics as basis functions for a Galerkin discretization thereof leads then to an efficient and stable numerical scheme by exploiting the spectral properties of the involved integral operators.
A similar physical model was introduced in ~\cite{Urklain} with an algebraic formula of the approximate solution. However, with the spectral properties of the layer potentials and integral operators at hand, our approach first introduces an integral formulation for the exact solution and thus a rigorous mathematical framework.
In a second step, we then propose the Galerkin discretization.
The mathematical framework lays out the basis to derive a rigorous error analysis which we plan in the future.

We summarize the main contributions and organization of this work as follows:
\bei
\item In Section~\ref{sec:preliminaries} and~\ref{sec:layer-potentials}, we give an introduction and overview of the layer potentials and corresponding boundary integral operators of the isotropic linear elasticity operator \eqref{elasticity} on an arbitrary bounded domain with Lipschitz boundary which are sparely reported in the literature. 
\item 
Analytical properties of layer potentials and boundary integral operators are presented and proven in Section~\ref{sec:properties}. 
\item On the unit sphere, we introduce the vector spherical harmonics in Section~\ref{sec:VSH} and prove spectral properties of the boundary operators and layer potentials of this particular basis. 
\item As an application, we consider a piecewise constant elastic model with spherical inclusions and derive a integral equation in Section~\ref{sec:case-study} that is then discretized by means of the vector spherical harmonics and tested numerically in Section~\ref{sec:8}. 
\eei

%

\section{Preliminaries}
Denote $\mathbb S^2$  the unit sphere and $B $ the unit ball in $\mathbb R^3$. 
Let throughout this paper $\Omega^-\subset \mathbb R^3$ denote a bounded domain with Lipschitz boundary $\Gamma=\del \Omega^-$ and outward pointing normal vector field ${\bf n}: \Gamma\to \mathbb S^2$. Further, we 
denote by $\Omega^+$ the unbounded set  $\mathbb R^3\backslash \overline {\Omega^-}$.

\label{sec:preliminaries}
\subsection{Notations}
\label{ssec:not}
We will first introduce some standard notions in the context of integral equations which can be found in standard textbooks (see, for example, \cite{McLean,sauter2010boundary,steinbach2007numerical}).

Let $\Omega$ be a domain with Lipschitz boundary, e.g., $\Omega=\Omega^-$ or $\Omega=\Omega^+$ (unbounded).
Following the conventions and notation of~\cite{sauter2010boundary}, we define for $s\in\mathbb R$
\begin{align}
	\label{s-loc}
	H_{\mathop{\rm loc}}^s (\Omega)
	&= 
	\left\{ u\in\big(C_{\rm comp}^\infty(\Omega)\big)^* \;\middle|\; \forall \chi \in C_{\rm comp}^\infty(\Omega): \chi u \in H^\ell(\Omega) \right\},
\end{align}
see Definition 2.6.1 in~\cite{sauter2010boundary}, and note that this consist of a slightly unconventional definition of  $H_{\mathop{\rm loc}}^\ell(\Omega)$, see also Remark 2.6.2. 
We further define, see Definition 2.6.5 in~\cite{sauter2010boundary}, for $s\in\mathbb R$
\bel{s-comp}
	H_{\mathop{\rm comp}}^{s} (\Omega) = \bigcup\limits_K \left\{u \in H_{\mathop{\rm loc}}^{s}(\Omega)\;\middle|\; \mathop{\rm supp} (u)\subset K \right\},
\ee
where the union is taken over all relatively compact subsets $K\subset \Omega$, and introduce
\be
\Vns{\Omega^-} = \left\{ v\in H^1(\Omega^-) \;\middle|\; \int_{\Omega^-} v = 0\right\}.
\ee

\noindent
Next, we denote by $\Hh(\Gamma)^3$ the Sobolev space of order $\frac{1}{2}$ with the usual Sobolev-Slobodeckij norm 
$\Vert \lambda \Vert^2_{\Hh(\Gamma)} 
:= 
\sum_{k=1}^3 \Vert \lambda_k \Vert^2_{\Hh(\Gamma)}$
for $\lambda = (\lambda_1,\lambda_2,\lambda_3)^\top$ and with
\[
\Vert \lambda_k \Vert^2_{\Hh(\Gamma)}
:=\Vert \lambda_k \Vert^2_{L^2(\Gamma)} 
+ \int_{\Gamma} \int_{\Gamma} \frac{\vert\lambda_k(x)-\lambda_k(y)\vert^2}{\vert x - y \vert^3 } \, dx dy.
\]
Moreover, we define $\Hnh(\Gamma)^3:=\left(\Hh(\Gamma)^3\right)^*$ and we equip this Sobolev space with the canonical dual norm $\Vert \cdot \Vert_{\Hnh(\Gamma)}$. 
We introduce 
\bel{trace}
\gamma^\mp \colon H_{\mathop{\rm loc}}^1(\Omega^{\mp})^3 \rightarrow \Hh(\Gamma)^3
\ee
as the continuous, linear and surjective interior and exterior Dirichlet trace operators respectively, 
see Theorem 2.6.8~\cite{sauter2010boundary}, and define the jump operator by 
\bel{jump-operator}
\jump{\varphi} = \gamma^-\varphi - \gamma^+\varphi.
\ee
Further, let $\gamma \colon H_{\mathop{\rm loc}}^1(\mathbb R^3)^3 \rightarrow \Hh(\Gamma)^3$ be given by $\gamma\varphi=\gamma^-\varphi =\gamma^+\varphi$ almost everywhere.

Consider now the stress tensor $\mathcal T$ associate with ${\bf L}$, as is  defined by \eqref{elasticity}, reading 
\bel{stress}
\mathcal T\varphi:= 2\mu e(\varphi)+ \lambda \mathop{\rm Tr} e(\varphi )  \mathop{\rm Id}, \quad \varphi \in H^1_{\mathop {\rm loc}}(\Omega^\mp )^3. 
\ee
For the domains $\Omega^\mp$, 
the classical normal derivative operator, satisfying 
\bel{conormal-strong}
	\mathcal T^\mp_{\bf n} \varphi:= \gamma^\mp (\mathcal T \varphi  {\bf n}),
\ee
for regular $\varphi$, can be extended to an operator $\mathcal T^\mp_{\bf n}: H^{1}_{\bf L}(\Omega^\mp)^3
\to  \Hnh(\Gamma)^3$,
with $H^{1}_{\bf L}(\Omega)^3 = \{ u\in H^{1}(\Omega)^3 \;|\; \mathbf L u \in L^2_{\mathop {\rm loc}}(\Omega)^3 \}$, based on Green's first identity.
We then define the corresponding jump operator by
\bel{jump-operator2}
\jump{\mathcal T \varphi} =\mathcal T^-_{\bf n} \varphi -\mathcal T^+_{\bf n}\varphi.
\ee
Further, define $\mathcal T_{\bf n}: H^1_{\bf L}(\mathbb R^3 )^3\to  \Hnh(\Gamma)^3$ the global normal derivative operator given by $\mathcal T_{\bf n} \varphi =\mathcal T^-_{\bf n} \varphi =\mathcal T^+_{\bf n}\varphi$.

\subsection{Fundamental solutions}
Consider the matrix-valued  fundamental solution $G= (G_{ij})_{ij}$  to the linear isotropic elasticity equation such that $G_i$, the $i$-th column of the matrix $G$  satisfies the following identity: 
\bel{fundamental} 
{\bf L}G_i (x)= \delta(x) \, {\bf e}_i,
\ee
with  ${\bf L}$ defined by \eqref{elasticity},  $\delta $  being the Dirac distribution at the origin and  ${\bf e}_i$ the   canonical basis in $\mathbb R^3$. The Green's function  $G$ is given by~\cite{McLean,steinbach2007numerical}: 
\bel{Green}
G_{ij}(x):= {1 \over 8 \pi \mu  |x|} \left({\lambda+3\mu \over \lambda+2\mu}   \delta_{ij} +{\lambda+\mu \over \lambda+2\mu}{ x_ix_j  \over |x| ^2  }  \right), 
\ee
where we recall that $\mu, \lambda$ are the Lam\'e constants and $\delta_{ij}$ is the Kronecker symbol.
\subsection{Rigid displacement}
For a given domain $\Omega\subset \mathbb R^3$, we consider the following problem: find $u \in H^1_{\mathop{\rm loc}}(\Omega)^3$ such that 
\bel{simple}
{\bf L} u =0,\quad\mbox{in}~H^{-1}(\Omega)^3
\ee
with appropriate boundary conditions. Equation \eqref{simple} holds obviously if  $e(u)=0$. Indeed, 
we call the displacement $u\in H^1_{\mathop{\rm loc}}(\Omega)^3$ a \emph{rigid displacement} if $e(u)=0$. It is well-known that the displacement~$u$ is a rigid displacement if and only if it has the form $u=Ax+b$ where $A\in \mathbb R^{3\times 3}$ is a constant skew matrix  and $b\in\mathbb R^3$ a constant vector (see, for example, \cite{JKO,McLean}).   
\section{Layer potentials}
\label{sec:layer-potentials}
In this section, we introduce the layer potentials and associated boundary operators which only have been sparsely reported in the literature for the operator ${\bf L}$. We therefore provide a complete overview. 
\subsection{Single layer potentials}

Using the fundamental solution \eqref{Green}, we can now define the single layer potential $\SL: \Hnh(\Gamma)^3 \to H^1 (\mathbb R^3 \textbackslash \Gamma)^3 $  associated to  the isotropic elasticity operator ${\bf  L}$: 
\bel{single-layer}
( \SL \phi) (x):= \int_\Gamma G(x-y) \, \phi(y ) \, dy, \quad x \in \mathbb R ^3\textbackslash \Gamma.   
\ee
Further, see e.g.,~\cite{BUI}, such function $\SL \phi $ defined on $\mathbb R^3 \textbackslash \Gamma$ is continuous across the interface $\Gamma$, i.e. $\llbracket \SL \phi \rrbracket = 0 $  and a single layer boundary operator  $\SLO: \Hnh(\Gamma) ^3\to \Hh(\Gamma) ^3$ can  be defined by restricting the single layer potential to the boundary $\Gamma$:
\bel{single-layer-boundary}
(\SLO \phi) (x) := \int_\Gamma G(x-y) \, \phi (y) \, dy, \quad x \in \Gamma,  
\ee
so that $\gamma \SL \phi =  \SLO \phi$. 
The following result is obvious: 
\begin{lemma}
\label{lemma-single}
For $ \phi\in \Hnh(\Gamma)^3$ and $\SL \phi $ defined by \eqref{single-layer}, let ${\bf  L}$ be the isotropic elasticity operator \eqref{elasticity} and 
 we have 
\[
{\bf L}\SL \phi= 0 \quad \mbox{in } \mathbb R^3\textbackslash\Gamma. 
\]
\end{lemma}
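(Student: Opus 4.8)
The plan is to exploit the defining property of the fundamental solution together with the fact that the singularity of $G(x-y)$ sits on $\Gamma$, which is disjoint from any point $x \in \mathbb{R}^3 \setminus \Gamma$. Concretely, fix $x_0 \in \mathbb{R}^3 \setminus \Gamma$. Since $\Gamma$ is closed and $x_0 \notin \Gamma$, there is a neighbourhood $U$ of $x_0$ with $\overline{U} \cap \Gamma = \emptyset$, so that on $U$ the integrand $G(x-y)\phi(y)$ is smooth in $x$ uniformly for $y \in \Gamma$. Hence one may differentiate under the integral sign and apply the operator $\mathbf{L}$ (which acts in the $x$-variable) inside the integral:
\[
\mathbf{L}(\SL\phi)(x) = \int_\Gamma \big(\mathbf{L}_x\, G(x-y)\big)\,\phi(y)\,dy, \qquad x \in U.
\]

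Next I would invoke the defining identity \eqref{fundamental}, column by column: for each $i$, $\mathbf{L}_x G_i(x-y) = \delta(x-y)\,{\bf e}_i$ in the sense of distributions. For $x \in U$ and $y \in \Gamma$ we have $x \neq y$, so $\delta(x-y) = 0$ there; that is, $\mathbf{L}_x G(x-y) = 0$ pointwise for $(x,y) \in U \times \Gamma$. Therefore the integrand above vanishes identically and $\mathbf{L}(\SL\phi)(x) = 0$ for all $x \in U$, in particular at $x_0$. Since $x_0 \in \mathbb{R}^3 \setminus \Gamma$ was arbitrary, this gives $\mathbf{L}\SL\phi = 0$ in $\mathbb{R}^3 \setminus \Gamma$.

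The only genuine subtlety — and the step I would be most careful about — is the justification of differentiation under the integral sign and, relatedly, making precise in what sense the equation $\mathbf{L}\SL\phi = 0$ holds. For $\phi \in \Hnh(\Gamma)^3$ the integral in \eqref{single-layer} is really a duality pairing $\langle \phi, G(x-\cdot)\rangle$, and since $y \mapsto G(x-y)$ is $C^\infty$ on $\Gamma$ for $x \notin \Gamma$ with all $x$-derivatives depending continuously (indeed smoothly) on $x$ in $U$, the map $x \mapsto (\SL\phi)(x)$ is smooth on $\mathbb{R}^3 \setminus \Gamma$ and its derivatives are obtained by pairing $\phi$ against the corresponding derivatives of $G(x-\cdot)$. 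Thus $\mathbf{L}\SL\phi$ is a genuine (smooth) function on $\mathbb{R}^3 \setminus \Gamma$, equal to $\langle \phi, \mathbf{L}_x G(x-\cdot)\rangle = 0$. If one prefers to stay at the level of $\phi \in L^2(\Gamma)^3$ or smoother and pass to the general case by density, the dominated-convergence estimates are entirely routine given the distance bound $|x - y| \geq \operatorname{dist}(x_0,\Gamma) > 0$ on $U \times \Gamma$; either way no real obstacle arises, and the lemma is indeed, as stated, essentially immediate.
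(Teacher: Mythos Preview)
Your argument is correct and is precisely the standard one: the paper itself gives no proof at all, merely declaring the result ``obvious'' before the statement. What you have written is exactly the justification that underlies that remark, including the care you take with the duality interpretation of the integral for $\phi\in\Hnh(\Gamma)^3$.
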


\subsection{Double layer potential}
We  introduce the double layer potential $\DL:\Hh(\Gamma)^3\to H^1(\mathbb R^3\backslash \Gamma)^3$,  by
\bel{double-layer}
\DL\varphi (x) =  \int _\Gamma \mathcal T_{{\bf n},y}(G) (x-y) \,  \varphi (y) \, dy, \quad x\in \mathbb R^3\backslash \Gamma,
\ee
where  the subscript $y$ means that the normal derivative operator $\mathcal T_{\bf n}$, defined in Section~\ref{ssec:not}, is taken with respect to the $y$-variable. We define the double layer boundary operator $\DLO :\Hh(\Gamma)^3\to \Hh(\Gamma)^3$ by 
\bel{double-boundary}
(\DLO \varphi)(x) =  \int _ \Gamma \mathcal T_{{\bf n},y} (G) (x-y) \,  \varphi(y) \, dy, \quad x\in \Gamma, 
\ee
in the sense of principal value.  Further, the adjoint double layer boundary operator  $\DLO ^*:\Hnh(\Gamma)^3\to \Hnh(\Gamma)^3$ is given as
\bel{double-boundary*}
(\DLO^*\phi)(x) = \int _ \Gamma \left(\mathcal T_{{\bf n}, x} (G) \right)^\top (x-y) \, \phi(y) \, dy, \quad x\in \Gamma,
\ee 
Similar to Lemma~\ref{lemma-single}, the following result is obvious: 
\begin{lemma}
\label{lemma-double}
For $ \varphi\in \Hh(\Gamma)^3$ and $\DL \varphi $ defined by \eqref{double-layer}, we have 
\[
{\bf  L}\DL \varphi= 0 \quad \mbox{in } \mathbb R^3\textbackslash\Gamma, 
\]
where ${\bf  L}$ is the isotropic elasticity operator \eqref{elasticity}.
\end{lemma}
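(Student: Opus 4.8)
The plan is to reduce the statement to the defining property \eqref{fundamental} of the fundamental solution, exploiting that ${\bf L}$ differentiates in the evaluation variable $x$ whereas the conormal operator $\mathcal T_{{\bf n},y}$ appearing in \eqref{double-layer} differentiates in $y$.

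First I would fix an arbitrary $x_0\in\mathbb R^3\setminus\Gamma$ and pick a bounded open neighbourhood $U\ni x_0$ with $\overline U\cap\Gamma=\emptyset$, so that $d:=\mathrm{dist}(\overline U,\Gamma)>0$. On $U$ the kernel $k(x,y):=\mathcal T_{{\bf n},y}(G)(x-y)$ is $C^\infty$ in $x$, and it, together with each of its $x$-derivatives, is bounded on $U\times\Gamma$, since $G$ and its derivatives are smooth on $\{|z|\ge d\}$ and $\Gamma$ is compact. Because $\varphi\in\Hh(\Gamma)^3\hookrightarrow L^2(\Gamma)^3$, the integral in \eqref{double-layer} is a classical Lebesgue integral and one may differentiate under the integral sign, obtaining $\DL\varphi\in C^\infty(U)^3$ and
\[
({\bf L}\,\DL\varphi)(x)=\int_\Gamma \big({\bf L}_x\, k(x,y)\big)\,\varphi(y)\,dy,\qquad x\in U,
\]
where ${\bf L}_x$ denotes ${\bf L}$ acting on the $x$-variable.

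It then remains to show that ${\bf L}_x\,k(x,y)=0$ for all $x\in U$ and $y\in\Gamma$. For fixed $y$, using $\partial_{y_k}\big(G(x-y)\big)=-\partial_{x_k}\big(G(x-y)\big)$ together with \eqref{stress}--\eqref{conormal-strong}, the map $x\mapsto k(x,y)$ is obtained from $x\mapsto G(x-y)$ by applying, columnwise, a first-order differential operator in $x$ whose coefficients depend only on ${\bf n}(y)$, $\mu$, $\lambda$, hence not on $x$. Since ${\bf L}_x$ is itself a constant-coefficient operator, the two commute, so ${\bf L}_x\,k(x,y)$ is obtained by applying that same first-order operator to $x\mapsto {\bf L}_x G(x-y)$, which vanishes on $U\times\Gamma$ by \eqref{fundamental} (as $x\neq y$ there). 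Hence the integrand above is identically zero, ${\bf L}\,\DL\varphi=0$ on $U$, and since $x_0$ was arbitrary, ${\bf L}\,\DL\varphi=0$ in $\mathbb R^3\setminus\Gamma$.

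The only point requiring care is the double interchange of ${\bf L}_x$ with the boundary integral and with the $y$-derivatives concealed in $\mathcal T_{{\bf n},y}$; both are justified by the uniform smoothness and boundedness of $k$ on $U\times\Gamma$, after which the argument is the same bookkeeping of constant-coefficient differential operators in the variable $x-y$ combined with \eqref{fundamental} that underlies Lemma~\ref{lemma-single}. No new ingredients are needed.
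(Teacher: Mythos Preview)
Your outline --- differentiate under the integral and reduce to ${\bf L}_xG(x-y)=0$ --- is the right one, and coincides in spirit with what the paper has in mind when it declares the lemma ``obvious'' (no proof is given there; the result is standard in the cited references, and the cleanest justification is via the Newton-potential identity \eqref{other-def}--\eqref{equivalent}). There is, however, a genuine gap in your commutation step. When you convert $\mathcal T_{{\bf n},y}$ into a first-order $x$-operator $P$ and assert that ``${\bf L}_x$ is itself a constant-coefficient operator, so the two commute'', you are implicitly treating ${\bf L}$ and $P$ as scalar operators. For the elasticity system they are \emph{matrix}-valued constant-coefficient operators, and such operators do not commute in general: the symbols
\[
L(\xi)=\mu|\xi|^2 I+(\mu+\lambda)\,\xi\xi^\top,
\qquad
T(\xi)=\mu({\bf n}\!\cdot\!\xi)\,I+\mu\,\xi{\bf n}^\top+\lambda\,{\bf n}\xi^\top
\]
fail to commute whenever ${\bf n}$ and $\xi$ are not parallel (compute $[\,\xi\xi^\top,\xi{\bf n}^\top\,]=|\xi|^2\xi{\bf n}^\top-({\bf n}\!\cdot\!\xi)\,\xi\xi^\top$). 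So the passage from ${\bf L}_x\,k(x,y)$ to ``$P$ applied to ${\bf L}_xG$'' is not justified as written.

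The repair is short. Do not convert $\mathcal T_{{\bf n},y}$ into an $x$-operator; instead push the \emph{scalar} entries $L_{pi}(\partial_x)$ of ${\bf L}$ through the boundary integral and through $\mathcal T_{{\bf n},y}$ one at a time (a scalar $x$-operator trivially commutes with any $y$-operator). Summing over $i$ and using the symmetry $G_{ki}=G_{ik}$ of \eqref{Green}, one is left with $\mathcal T_{{\bf n},y}$ acting on a vector whose $k$-th component is $[{\bf L}_xG_k(x-\cdot)]_p$, which vanishes on $U\times\Gamma$ by \eqref{fundamental}. Equivalently, and more directly, use $\DL=\mathcal N\,\mathcal T_{\bf n}^{\,*}$ from \eqref{other-def} together with \eqref{equivalent}: then ${\bf L}\,\DL\varphi=\mathcal T_{\bf n}^{\,*}\varphi$ is a distribution supported on $\Gamma$, hence zero in $\mathbb R^3\setminus\Gamma$. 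Either route closes the gap without changing the structure of your argument.
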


\subsection{Newton potential}
Finally, for sake of completeness, we also give the  Newton potential associated to the isotropic elasticity operator \eqref{elasticity}. Define $ \mathcal N: H_{\rm comp}^{s}(\mathbb R^3)^3\to H_{\rm loc}^{s+2}(\mathbb R^3)^3$ for $s\in\mathbb R$:
\bel{Newton}
\mathcal N \psi(x) = \int_{\mathbb R^3} G(x-y) \psi (y) \, dy, \quad x\in \mathbb R^3,  
\ee
where $G$ is the Green's function defined by \eqref{Green}. Following the definition for the elasticity operator ${\bf L}$ and all $\psi \in \mathcal D'(\mathbb R^3)^3$, we have 
\bel{equivalent}
\psi={\bf L} \mathcal N \psi =  \mathcal N{\bf L}\psi, \quad \mbox{in } \mathcal D'(\mathbb R^3)^3.  
\ee
Let $\gamma^*: \Hnh(\Gamma)^3\to H^{-1}_{\mathop{\rm comp}}(\mathbb R^3)^3$, $\mathcal {T_{\bf n}}^*: \Hh(\Gamma) ^3\to  H^{-1}_{\mathop{\rm comp}}(\mathbb R^3\textbackslash \Gamma)^3$ be the  adjoint of the trace operator $\gamma$ and the adjoint of the normal derivative operator $\mathcal {T_{\bf n}}$ respectively, defined in Section~\ref{ssec:not}. We then give an equivalent definition of the single and double layer potential: 
\bel{other-def}
\SL = \mathcal N\gamma^*, \quad \DL = \mathcal N {\mathcal T_{\bf n}^*}. 
 \ee  

\subsection{Properties of layer potentials}
\label{sec:properties}
We are now listing a selection of known results of layer potentials that will be used in the following.
Let us first recall the following theorem given in \cite{BUI} (see also \cite[Section 6.7]{steinbach2007numerical}): 
\begin{theorem}
\label{double-in/out}
Let $\phi\in \Hnh(\Gamma)^3$   and  the single layer potential $\SL$, the adjoint double layer boundary operator $\DLO^*$ be defined by \eqref{single-layer}, \eqref{double-boundary*} respectively. Then the interior and exterior normal traces of the stress tensor satisfy 
 \bel{boundary-neuman}
 \mathcal T_{\bf  n}^- \SL \phi=     {1\over 2 } \phi+  \mathcal \DLO^*\phi, \qquad   
 \mathcal T_{\bf  n}^+ \SL \phi =  -{1\over 2 } \phi+  \mathcal \DLO^*\phi, \qquad \mbox{on } \Hnh(\Gamma)^3. 
 \ee
\end{theorem}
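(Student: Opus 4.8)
The plan is to establish the jump relations for the conormal derivative of the single layer potential by combining the mapping properties already recorded with a duality argument against the double layer potential, following the classical template for second-order elliptic systems (as in \cite{McLean,sauter2010boundary,steinbach2007numerical}). The two identities in \eqref{boundary-neuman} are equivalent to the pair
\bel{pv-and-jump}
\mathcal T_{\bf n}^- \SL\phi + \mathcal T_{\bf n}^+\SL\phi = 2\,\DLO^*\phi, \qquad \jump{\mathcal T\,\SL\phi} = \phi,
\ee
so it suffices to prove these two statements. The first is essentially a matter of taking the average of the one-sided conormal traces and recognizing the principal-value kernel $\left(\mathcal T_{{\bf n},x}(G)\right)^\top(x-y)$ appearing in \eqref{double-boundary*}: away from the diagonal the integrand is smooth and the two one-sided limits agree with the principal value, while the local singularity, being odd to leading order in $x-y$, contributes nothing to the symmetric part. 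The genuine content is therefore the jump identity $\jump{\mathcal T\,\SL\phi} = \phi$.

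\textbf{Step 1 (reduction to a density argument).} Since $\SL\colon \Hnh(\Gamma)^3 \to H^1(\mathbb R^3\setminus\Gamma)^3$ is continuous and $\mathcal T_{\bf n}^\mp$ are continuous from the relevant local Sobolev spaces into $\Hnh(\Gamma)^3$, both sides of \eqref{pv-and-jump} are continuous in $\phi \in \Hnh(\Gamma)^3$; hence it is enough to verify them for $\phi$ in a dense subset, e.g. $\phi\in C^\infty(\Gamma)^3$ (or for $\Gamma$ smooth, and then transferred to the Lipschitz case by the mapping properties). For such smooth densities $\SL\phi$ is, on each side, classically differentiable up to $\Gamma$, and one may compute the traces of $\mathcal T(\SL\phi){\bf n}$ pointwise.

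\textbf{Step 2 (the jump).} For the jump, I would use the representation \eqref{other-def}, $\SL = \mathcal N\gamma^*$, together with the distributional identity \eqref{equivalent}, ${\bf L}\mathcal N\psi = \psi$. Applying ${\bf L}$ to $u:=\SL\phi$ gives ${\bf L}u = \gamma^*\phi$ in $\mathcal D'(\mathbb R^3)^3$, while Lemma~\ref{lemma-single} gives ${\bf L}u=0$ in $\mathbb R^3\setminus\Gamma$; testing ${\bf L}u$ against $v\in C_c^\infty(\mathbb R^3)^3$ and integrating by parts separately over $\Omega^-$ and $\Omega^+$, using that $\jump{\SL\phi}=0$, produces exactly $-\int_\Gamma \jump{\mathcal T u}\cdot \gamma v$, and matching with $\langle \gamma^*\phi, v\rangle = \int_\Gamma \phi\cdot\gamma v$ for all $v$ yields $\jump{\mathcal T\,\SL\phi} = \phi$. (Equivalently, one argues directly from the second Green/Betti identity for ${\bf L}$ on a thin shell around $\Gamma$.)

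\textbf{Step 3 (the principal-value average) and the obstacle.} The average identity $\frac12(\mathcal T_{\bf n}^- + \mathcal T_{\bf n}^+)\SL\phi = \DLO^*\phi$ is the place where the Lipschitz regularity of $\Gamma$ bites: one must show that the one-sided nontangential limits of $\mathcal T(\SL\phi){\bf n}$ exist and that their average equals the principal value defining $\DLO^*$. I would handle this by the standard jump-relation analysis for the elastostatic (Kelvin) kernel — decomposing $\mathcal T_{{\bf n},x}(G)(x-y)$ into its leading homogeneous-degree-$(-2)$ part plus a milder remainder, noting the leading part is the gradient of a Calderón--Zygmund kernel whose one-sided limits differ by a multiple of the identity (the $\pm\frac12\phi$ term, arising from the solid-angle contribution of the local half-space) while its symmetric average is the principal-value operator. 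I expect this step — rigorously justifying the existence of the one-sided traces and the $\pm\frac12$ solid-angle jump on a merely Lipschitz boundary — to be the main obstacle; however, since the statement is quoted verbatim from \cite{BUI} and \cite[Section~6.7]{steinbach2007numerical}, I would in the write-up cite those references for this classical computation and present Steps~1--2 in detail, remarking that the decomposition of $\mathcal T_{{\bf n},y}(G)$ needed here is recorded explicitly in Section~\ref{sec:layer-potentials}.
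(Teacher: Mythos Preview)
The paper does not actually prove this theorem: it is introduced with ``Let us first recall the following theorem given in \cite{BUI} (see also \cite[Section 6.7]{steinbach2007numerical})'' and stated without proof. Your proposal therefore goes strictly beyond what the paper does; the sketch you give (reduce to the jump identity via $\SL = \mathcal N\gamma^*$ and ${\bf L}\mathcal N = \mathrm{Id}$, then identify the average with the principal-value operator) is the standard route and is correct in outline, and your own instinct to ultimately cite \cite{BUI,steinbach2007numerical} for the Lipschitz-boundary technicalities in Step~3 matches exactly how the paper handles the result.
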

We now show several jump conditions relating to the boundary layer potentials above which can be found, for example, in \cite[Theorem 6.10]{McLean}.
\begin{theorem}
\label{Th:jump-relations}
Let $\Omega^-\in \mathbb R^3$ be a bounded Lipschitz domain with  boundary $\Gamma$.  Consider the single and double layer potentials defined by \eqref{single-layer} and \eqref{double-layer} respectively. Then it holds 
\bel{jump-relation}
\begin{array}{l}
\llbracket \SL \phi \rrbracket = 0, \quad \llbracket \DL \varphi \rrbracket = -\varphi, \quad \mbox{on } \Hh(\Gamma)^3, \\
\llbracket \mathcal T \SL \phi \rrbracket  =  \phi, \quad \llbracket \mathcal T \DL \varphi \rrbracket  = 0, \quad \mbox{on } \Hnh(\Gamma)^3
\end{array}
\ee
for all $\phi\in \Hnh(\Gamma)^3, \varphi\in \Hh(\Gamma)^3$. 
\end{theorem}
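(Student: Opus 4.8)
The plan is to establish the four jump relations by combining the mapping properties already recorded with the distributional identity \eqref{equivalent} and the alternative representation \eqref{other-def}, rather than by a direct computation with the explicit kernel \eqref{Green}. Two of the relations are already essentially in hand: the continuity $\jump{\SL\phi}=0$ was noted immediately after \eqref{single-layer}, and the Neumann jump $\jump{\mathcal T\SL\phi}=\phi$ follows by subtracting the two identities in Theorem~\ref{double-in/out} (the exterior trace minus the interior trace gives $-\tfrac12\phi - \DLO^*\phi - (\tfrac12\phi + \DLO^*\phi)$, so $\jump{\mathcal T\SL\phi}=\mathcal T^-_{\bf n}\SL\phi-\mathcal T^+_{\bf n}\SL\phi=\phi$). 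So the genuine work is the double layer: $\jump{\DL\varphi}=-\varphi$ and $\jump{\mathcal T\DL\varphi}=0$.

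For these I would use $\DL=\mathcal N\mathcal T_{\bf n}^*$ from \eqref{other-def}. The key structural fact is that $\mathcal T_{\bf n}^*$ is, up to sign, the distribution $\varphi\mapsto \mathcal T\big(\,\cdot\,\big)$ tested against the surface measure; concretely, for a test field $\psi$ one has $\langle \mathcal T_{\bf n}^*\varphi,\psi\rangle = \langle \varphi, \mathcal T_{\bf n}\psi\rangle_\Gamma$, so $\mathcal T_{\bf n}^*\varphi$ is supported on $\Gamma$ and acts as a (vector-valued) first-order distributional layer. Applying ${\bf L}\mathcal N=\mathrm{Id}$ from \eqref{equivalent} shows ${\bf L}\DL\varphi = \mathcal T_{\bf n}^*\varphi$, which is zero off $\Gamma$ (consistent with Lemma~\ref{lemma-double}) and encodes the jump data on $\Gamma$. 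The strategy is then the standard one: integrate by parts (Green's second identity for ${\bf L}$) over $\Omega^-$ and over a large ball minus $\overline{\Omega^-}$ against $G(x-\cdot)$ for a fixed $x\notin\Gamma$, collect the boundary terms, and let the outer radius tend to infinity using the $O(|x|^{-1})$ decay of $G$ and the $O(|x|^{-2})$ decay of its conormal derivative. This produces the Green representation formula $u = \DL(\gamma u) - \SL(\mathcal T_{\bf n}u)$ for any $u$ with ${\bf L}u=0$ in $\Omega^-$ (and the analogous exterior formula), and matching the two sides across $\Gamma$, together with the already-known single-layer jumps, forces $\jump{\DL\varphi}=-\varphi$ and $\jump{\mathcal T\DL\varphi}=0$. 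Alternatively, and perhaps more cleanly, I would simply cite \cite[Theorem 6.10]{McLean} after verifying that the abstract hypotheses there (strong ellipticity of ${\bf L}$, Lipschitz $\Gamma$, the fundamental solution normalization \eqref{fundamental}) are met in our setting, since the theorem is stated for general second-order strongly elliptic systems.

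The main obstacle is the rigorous justification of the principal-value and trace passages for a merely Lipschitz boundary: the interior and exterior Dirichlet and conormal traces of $\DL\varphi$ must be shown to exist in $\Hh(\Gamma)^3$ and $\Hnh(\Gamma)^3$ respectively, and the formal boundary-term manipulations must be interpreted in the correct duality pairing, not pointwise. One must also take care that ${\bf L}$ here is only \emph{strongly} elliptic (not pointwise positive definite on all of $\mathbb R^{3\times 3}$, because of the symmetrization in $e(u)$), so Korn-type arguments or the Ne\v{c}as theory are what underpins the trace mapping properties in \eqref{trace} and \eqref{conormal-strong}; these are exactly the hypotheses under which the cited jump relations hold, so once that check is made the result follows. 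I expect the cleanest writeup to be: verify the hypotheses, invoke \cite{McLean} for the two double-layer relations, and give the one-line derivation above for the two single-layer relations from Theorem~\ref{double-in/out} and the continuity of $\SL\phi$.
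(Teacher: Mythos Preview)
Your proposal is correct and aligns with the paper's treatment: the paper does not give a proof of Theorem~\ref{Th:jump-relations} at all but simply states the result with a reference to \cite[Theorem~6.10]{McLean}, which is precisely your recommended ``clean writeup.'' Your additional observations---that $\jump{\SL\phi}=0$ is already recorded after \eqref{single-layer} and that $\jump{\mathcal T\SL\phi}=\phi$ follows by subtracting the two identities in Theorem~\ref{double-in/out}---are valid elaborations (note a small sign slip in your parenthetical: the difference should read $(\tfrac12\phi+\DLO^*\phi)-(-\tfrac12\phi+\DLO^*\phi)=\phi$), but they go beyond what the paper itself provides.
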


We now consider the invertibility of the single  layer boundary operator \eqref{single-layer-boundary} (see \cite[Theorem 10.7]{McLean} or \cite[Theorem 6.36]{steinbach2007numerical}.
\begin{lemma}
\label{positive}
Let $\Omega^- \subset \mathbb R^3 $ be a bounded domain with  Lipschitz boundary $\Gamma$. 
If $\mu>0$ and $\lambda\ge 0$, the single layer boundary operator $\SLO$ defined by \eqref{single-layer-boundary} is coercive, i.e.
\[
	\langle  \SLO\phi ,\phi \rangle_{\Hh(\Gamma)\times \Hnh(\Gamma)}
	>c \, \|\phi\|^2_{\Hnh(\Gamma)},\qquad\forall \phi \in \Hnh(\Gamma)^3.
\]
\end{lemma}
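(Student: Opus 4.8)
The plan is to reduce the claimed $\Hnh$-coercivity to a volume-energy estimate for the single layer potential, using the jump relations of Theorem~\ref{Th:jump-relations} together with Green's first identity for the elasticity operator $\bf L$, and then to close the argument with a Poincaré/Korn-type inequality and an inf-sup (norm-equivalence) argument on the boundary. First I would fix $\phi\in\Hnh(\Gamma)^3$ and set $u:=\SL\phi\in H^1_{\mathop{\rm loc}}(\mathbb R^3)^3$, which satisfies ${\bf L}u=0$ in $\mathbb R^3\setminus\Gamma$ by Lemma~\ref{lemma-single}, with $\jump{u}=0$ and $\jump{\mathcal T u}=\phi$ by Theorem~\ref{Th:jump-relations}. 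Applying Green's first identity $\int_{\Omega}(\mathcal T v\,{\bf n})\cdot\gamma v = \int_\Omega\big(2\mu\,|e(v)|^2+\lambda\,(\mathop{\rm Tr}e(v))^2\big)$ separately on $\Omega^-$ and on a large ball $B_R\cap\Omega^+$, letting $R\to\infty$ (using the $O(|x|^{-1})$ decay of $G$ so that the outer sphere contribution vanishes), and combining the two via the jump relations, yields the fundamental identity
\[
\langle\SLO\phi,\phi\rangle_{\Hh(\Gamma)\times\Hnh(\Gamma)}
=\int_{\mathbb R^3\setminus\Gamma}\Big(2\mu\,|e(u)|^2+\lambda\,\big(\mathop{\rm Tr}e(u)\big)^2\Big)\,dx.
\]
For $\mu>0$ and $\lambda\ge0$ the integrand is pointwise nonnegative, so the left-hand side is $\ge 0$; moreover it vanishes only if $e(u)\equiv0$ on both sides, i.e. $u$ is a rigid displacement on $\Omega^-$ and on $\Omega^+$, and the decay at infinity forces $u\equiv0$ on $\Omega^+$, hence by $\jump{u}=0$ also $\gamma u=0$ and then $u\equiv0$ on $\Omega^-$, whence $\phi=\jump{\mathcal T u}=0$. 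This gives strict positivity; the remaining task is to upgrade this to the quantitative bound with a fixed constant $c>0$.

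For the quantitative estimate I would argue that the bilinear form $a(u,v):=\int_{\mathbb R^3\setminus\Gamma}\big(2\mu\,e(u):e(v)+\lambda\,\mathop{\rm Tr}e(u)\,\mathop{\rm Tr}e(v)\big)$ controls $\|u\|_{H^1(\Omega^-)^3}^2$ modulo rigid displacements via Korn's second inequality on $\Omega^-$, and controls a suitable weighted $H^1$-norm on $\Omega^+$ via a Korn inequality on the exterior domain (or on annuli, exploiting again the decay of $\SL\phi$ to eliminate the rigid-displacement kernel). Since the trace operators $\gamma^\mp$ and $\mathcal T^\mp_{\bf n}$ are bounded, $\|\phi\|_{\Hnh(\Gamma)}=\|\jump{\mathcal T u}\|_{\Hnh(\Gamma)}\lesssim \|u\|_{H^1(\Omega^-)^3}+\|u\|_{H^1_w(\Omega^+)^3}$, which together with the Korn bounds gives $\|\phi\|_{\Hnh(\Gamma)}^2\lesssim a(u,u)=\langle\SLO\phi,\phi\rangle$, i.e. the asserted coercivity. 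An equivalent and perhaps cleaner route is to invoke that $\SLO:\Hnh(\Gamma)^3\to\Hh(\Gamma)^3$ is a bounded, self-adjoint (the Green's function $G$ is symmetric), injective operator which is the sum of a $\Hnh$-elliptic principal part and a compact perturbation; ellipticity of the principal part follows from the explicit form \eqref{Green}, whose leading singularity is a positive-definite constant combination of the Laplace kernel, so classical Gårding-type arguments plus the established injectivity yield coercivity.

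The main obstacle is the exterior-domain part of the energy argument: on the unbounded $\Omega^+$ one must justify both the vanishing of the contribution from the sphere at infinity in Green's identity and the validity of a Korn-type inequality with the correct (weighted) norm, since $\Omega^+$ is not a bounded Lipschitz domain and the naive Korn inequality fails there. I would handle this by working on truncated annular domains $\Omega^+\cap B_R$, using the Caccioppoli-type decay estimates for $\SL\phi$ (it decays like $|x|^{-1}$ with gradient like $|x|^{-2}$), passing to the limit $R\to\infty$, and invoking a Korn inequality on annuli with constant independent of $R$ — all of which are standard but slightly technical; alternatively one cites the scalar-diffusion analogues in \cite{McLean,steinbach2007numerical} whose proofs carry over verbatim once the elasticity energy identity above is in hand.
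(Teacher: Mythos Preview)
Your proposal is correct and is precisely the standard energy-identity argument for coercivity of the elastic single layer operator. Note, however, that the paper does not supply its own proof of this lemma: it simply records the statement and refers the reader to \cite[Theorem~10.7]{McLean} and \cite[Theorem~6.36]{steinbach2007numerical}. What you have written is essentially a sketch of the proof one finds in those references (energy identity via Green's formula and the jump relations, positivity of the elastic energy density for $\mu>0,\ \lambda\ge 0$, then either Korn-plus-trace bounds or a G\aa rding/injectivity argument to obtain the uniform constant), so there is nothing to compare beyond saying that your route coincides with the textbook one the paper cites. Your identification of the exterior-domain step (vanishing of the far-field contribution and the need for a weighted Korn inequality on $\Omega^+$) as the only genuinely technical point is accurate; this is handled in the cited references exactly along the lines you indicate.
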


\begin{corollary}[Invertibility of the single layer boundary operator]
\label{Corol:invertible}
Let $\Omega^- \subset \mathbb R^3 $ be a bounded domain with  Lipschitz boundary $\Gamma$ and $\mu>0$ and $\lambda\ge 0$. Then, the  single layer boundary operator $\SLO:\Hnh(\Gamma )^3
 \to \Hh(\Gamma)^3 $  is invertible. 
\end{corollary}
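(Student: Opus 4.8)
The plan is to obtain invertibility directly from the coercivity established in Lemma~\ref{positive} by invoking the Lax--Milgram lemma.

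First I would record that $\SLO$ is a bounded linear operator from $\Hnh(\Gamma)^3$ to $\Hh(\Gamma)^3$; this is classical (see, e.g., \cite{McLean} or \cite[Chapter 6]{steinbach2007numerical}) and can be read off from the mapping properties of the single layer potential $\SL$ together with the continuity of the trace operator $\gamma$. Consequently the bilinear form
\[
a(\phi,\psi) := \langle \SLO\phi,\psi\rangle_{\Hh(\Gamma)\times\Hnh(\Gamma)}, \qquad \phi,\psi\in\Hnh(\Gamma)^3,
\]
is well defined and bounded, with $|a(\phi,\psi)|\le \|\SLO\|\,\|\phi\|_{\Hnh(\Gamma)}\,\|\psi\|_{\Hnh(\Gamma)}$, and by Lemma~\ref{positive} it is coercive on $\Hnh(\Gamma)^3$ with some constant $c>0$.

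Next, since $\Hnh(\Gamma)^3$ is a Hilbert space and $\Hh(\Gamma)^3=\bigl(\Hnh(\Gamma)^3\bigr)^*$ by definition, the Lax--Milgram lemma applies: for every $f\in\Hh(\Gamma)^3$, regarded as the continuous functional $\psi\mapsto\langle f,\psi\rangle_{\Hh(\Gamma)\times\Hnh(\Gamma)}$ on $\Hnh(\Gamma)^3$, there is a unique $\phi\in\Hnh(\Gamma)^3$ with $a(\phi,\psi)=\langle f,\psi\rangle$ for all $\psi\in\Hnh(\Gamma)^3$, that is, $\SLO\phi=f$. Hence $\SLO$ is a bijection. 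Boundedness of the inverse then follows either from the open mapping theorem or, more directly, by choosing $\psi=\phi$ in the coercivity estimate and using the duality bound, $c\,\|\phi\|_{\Hnh(\Gamma)}^2\le\langle\SLO\phi,\phi\rangle\le\|\SLO\phi\|_{\Hh(\Gamma)}\,\|\phi\|_{\Hnh(\Gamma)}$, so $\|\phi\|_{\Hnh(\Gamma)}\le c^{-1}\|\SLO\phi\|_{\Hh(\Gamma)}$.

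I do not anticipate a genuine obstacle here; the only point needing a little care is the consistent identification of $\bigl(\Hnh(\Gamma)^3\bigr)^*$ with $\Hh(\Gamma)^3$ through the pivot duality pairing, so that solving the variational problem is genuinely the same as inverting $\SLO$ \emph{onto all of} $\Hh(\Gamma)^3$ rather than merely onto a dense subspace. As an alternative route, one may note that $\SLO$ is symmetric — immediate from $G_{ij}=G_{ji}$ in \eqref{Green} — and combine this with coercivity to conclude via the Riesz representation theorem.
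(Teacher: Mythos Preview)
Your proof is correct and matches the paper's intent: the paper states this result as an immediate corollary of Lemma~\ref{positive} without writing out any further argument, and your Lax--Milgram derivation is precisely the standard way to pass from coercivity and boundedness of $\SLO$ to its invertibility. The remark on the duality identification $\bigl(\Hnh(\Gamma)^3\bigr)^*=\Hh(\Gamma)^3$ is a welcome clarification of a point the paper leaves implicit.
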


\section{Real vector spherical harmonics}
\label{sec:VSH}

\subsection{Surface gradient}
In the following, we introduce the real vector spherical harmonics. We begin with some conventions of the gradient. On a given domain $\Omega$, consider  a scalar valued function $f:\Omega\to \mathbb R^3$ and  a column-vector valued  function $F:\Omega\to  \mathbb R^3$, we define 	their gradients by
\bel{gradient-vector}
\nabla f(x)\in \mathbb R^3,~\mbox{with}~(\nabla f)_i = { \del f\over \del x_i}, \qquad
\nabla F(x) \in \mathbb R^{3 \times 3 }, ~\mbox{with}~(\nabla F)_{ij} ={ \del F_i\over \del x_j}. 
\ee
 Note in particular that $\nabla f$ is a column-vector while $\nabla F$ are row-wise gradients for each component $F_i$.

Restricting the considerations to the unit ball $\Omega=B$ and it surface $\partial\Omega=\mathbb S^2$, we denote by 
\bel{surface1}
	\nabla_{\! \rm s}=\hat \theta {\del\over \del\theta}+\hat \phi{1\over \sin\theta}{\del\over \del \phi}
\ee
 the surface gradient operator and $\hat r, \hat \theta, \hat \phi$ are radial, polar and azimuthal unit vectors which are supposed to be row vectors. Let $f$ denote a scalar function and $F$ a vector-valued function, i.e. $f\in \mathbb R$ and $F\in \mathbb R^3$ and with the convention of the gradient field, the surface gradient \eqref{surface1} can alternatively be written as
\bel{surface}
\begin{array}{l}
\nabla_{\! \rm s} f = \nabla f- {\bf n} ({\bf n} ^\top \nabla f),\\
\nabla_{\! \rm s} F = \nabla F- ( \nabla  F {\bf n}) {\bf n} ^\top,\\
 \end{array}
\ee
where $\nabla$ is the gradient in $\mathbb R^3$ based on the convention~\eqref{gradient-vector}. It is immediate to verify that 
\[
\nabla_{\! \rm s} f ^\top  {\bf n} =0, \quad \nabla_{\! \rm s}F {\bf n}=0. 
\]

\subsection{Definition of vector spherical harmonics}
The construction of the vector spherical harmonics is based on the 
scalar real spherical harmonics defined on the unit sphere $\mathbb S^2$ denoted by $(Y_{\ell m})_{l\geq 0}^{|m|\leq \ell}$ which are normalized such that 
\[
\langle Y_{\ell m}, Y_{\ell'm'}\rangle_{\mathbb S^2} = \int_{\mathbb S^2} Y_{\ell m} Y_{\ell'm'} = \delta_{\ell\ell'}\delta_{m'm'}.
\]
The vector spherical harmonics $V_{\ell m},W_{\ell m},X_{\ell m}: \mathbb S^2 \to \mathbb R^3 $ of degree $\ell\geq 0$ and order $m$, $|m|\leq\ell$  are given by 
\bel{vector-harmonics}
\aligned 
& V_{\ell m} := \nabla_{\! \rm s} Y_{\ell m}(\theta,\phi) - (\ell+1)Y_{\ell m}(\theta,\phi) \hat r,\\
& W_{\ell m} :=\nabla_{\! \rm s}  Y_{\ell m}(\theta,\phi)+\ell Y_{\ell m}(\theta,\phi) \hat r, \\
& X_{\ell m} :=\hat r \times \nabla_{\! \rm s} Y_{\ell m} (\theta,\phi).
\endaligned
\ee
The  symbol $\times$ represents the cross product in $\mathbb R^3$.  
We refer to  Appendix A for some explicit expressions of the vector spherical harmonics for the first few degrees. 
The vector spherical harmonics satisfy the following orthogonal properties: 
\bel{orthogonal}
\aligned 
&\int_{\mathbb S^2 } V_{\ell m} \cdot W_{\ell'm'} =0, ~\int_{\mathbb S^2} W_{\ell m}  \cdot  X_{\ell'm'}=0, ~ \int_{\mathbb S^2} X_{\ell m}   \cdot  V_{\ell'm'}=0, 
\\
&\int_{\mathbb S^2} V_{\ell m}  \cdot  V_{\ell'm'}=\delta_{\ell\ell'}\delta_{m'm'} (\ell+1) (2\ell+1), ~ \int_{\mathbb S^2} W_{\ell m}  \cdot  W_{\ell'm'}= \delta_{\ell\ell'}\delta_{m'm'} \ell(2\ell+1), \\& \int_{\mathbb S^2} X_{\ell m}  \cdot  X_{\ell'm'}= \delta_{\ell\ell'}\delta_{m'm'} \ell(\ell+1). 
\endaligned 
\ee
The scalar spherical harmonics (and thus the vector spherical harmonics) can be extended to any sphere $\Gamma_r(x_0) = \partial B_r (x_0)$ by translation and scaling. 
We will introduce the following scaled scalar product on $\Gamma_r(x_0)$ given by 
\bel{inner-product}
\langle u, v\rangle_{\Gamma_r(x_0)}= {1\over r^2}\int_{\Gamma_r(x_0)}u(s)\cdot v(s)ds= \int_{\mathbb S^2} u(x_0+ rs')\cdot v(x_0+rs')ds'. 
\ee
In practice, the  exact value of the scalar product \eqref{inner-product} cannot be computed explicitly in general. With a set $\{s_t, w_t\}_{t=1}^{T_g}$ of integration points and weights on the unit sphere,  the scalar product is approximated  by the quadrature rule
 \bel{quadrature}
 \langle u, v\rangle_{\Gamma_r(x_0), t} = \sum\limits_{t=1} ^{T_g} w_t \, u(x_0+ r s_ t )\cdot v(x_0+ r s_ t ). 
 \ee
 In the numerical tests below in Section~\ref{sec:8},  we will use the Lebedev quadrature points ~\cite{integral},
 which have the property that scalar spherical harmonics up to a certain degree $N_g$ are integrated exactly. This relationship is displayed in Table~\ref{tab:Lebedev}. 
 It can be noticed that the number of points increases quadratically with $N_g$.

Further, the family of vector spherical harmonics gives a complete basis of $L^2(\Gamma_r(x_0))^3$ and any real function $f\in L^2(\Gamma_r(x_0))^3$ can be represented as
\bel{expand}
	f(x)
	=
	\sum\limits_{\ell=0}^\infty\sum\limits_{m=-\ell}^\ell [v]_{\ell m} V_{\ell m} \Big({x-x_0\over  r }\Big) +  [w]_{\ell m}W_{\ell m} \Big({x-x_0\over  r}\Big)+ [x]_{\ell m} X_{\ell m} \Big({x-x_0\over  r }\Big), 
\ee
where $[v]_{\ell m}, [w]_{\ell m}, [x]_{\ell m} \in \mathbb R$. 

\begin{table}
\label{tab:Lebedev}
\centering
{\footnotesize
\begin{tabular}{||c|cccccccccccccc||} \hline\hline
$N_g$ & 3 & 5 & 7 & 9 & 11 & 13& 15 & 17 & 19 & 21 & 23 & 25 & 27 & 29 \\ \hline
$T_g$ & 6 & 14 & 26 & 38 & 50 & 74 & 86 & 110 & 146 &  170&  194 &  230 &  266 &  302  \\ \hline\hline 
$N_g$ & 31 & 35 & 41 & 47 & 53 & 59 & 65 & 71 & 77 & 83 & 89 & 95 & 101 & 107 \\ \hline
$T_g$ &  350 &  434 &  590 &  770 &  974 & 1202 & 1454 & 1730 &  2030  &  2354 & 2702 & 3074 & 3470  &  3890 \\ \hline\hline 
\end{tabular}
}
\caption{Degree $N_g$ and number of points $T_g$ of Lebedev quadrature rules such that spherical haromonics up to degree $N_g$ are integrated exactly with $T_g$ points.}
\end{table}

\subsection{Properties of the derivatives}
We give some derivative properties of the surface gradient \eqref{surface} that shall be useful in the upcoming analysis. In the following, let $u$ be the a scalar-valued function, $F,G$ be vector-valued functions and ${\bf  A } $ be a matrix-valued function.  We have the following product rule: \bel{product-rules}
\begin{array}{l}
\nabla_{\! \rm s} (uF)=      F   \nabla_{\! \rm s}  u ^\top   + u \nabla_{\! \rm s} F, \\
\nabla_{\! \rm s}(F^\top G)=  \nabla_{\! \rm s} F^\top G  +  \nabla_{\! \rm s} G^\top F. 
\end{array}
\ee
We also have the property for the cross product: 
\bel{gradient-cross}
\nabla_{\! \rm s} (F\times G)= \nabla_{\! \rm s} F\times G -  \nabla_{\! \rm s} G\times F. 
\ee
Later proof also requires the triple product
\bel{cross-inner}
\big( {\bf  A } \times F \big)^\top G =  {\bf  A }^\top \big( F\times G \big) =  \big( G\times {\bf  A} \big)^\top F. 
\ee
Now let $h= h(r)$ be a scalar function which does not depend on the polar angles and $u= u(\theta,\phi), H=H(\theta,\phi)$ a scalar and a vector valued function respectively depending only on the polar angles. Then there holds 
 \bel{grad}
 \begin{array}{l}
 \nabla (hu)= h_r    u \hat r   + {1\over r} h \nabla_{\! \rm s} u, \\
\nabla (hH)= h_r    H \hat r ^\top  + {1\over r} h \nabla_{\! \rm s} H. 
\end{array}
\ee
For the scalar function $h= h(r)$, denote by $h_r, h_{rr}$ the first and second derivative. Then we have 
\bel{divVSH}
\aligned 
& \mathop{\rm div}(hV_{\ell m})= -(\ell +1) \Big(h_r+{ \ell +2\over r} h \Big) Y_{\ell m},\\
&\mathop{\rm div}(hW_{\ell m})= \ell  \Big(h_r-{ \ell -1\over r} h \Big) Y_{\ell m},\\
&\mathop{\rm div}(hX_{\ell m})=0, 
\endaligned
\ee
and 
\bel{laplaceVSH}
\aligned 
& \Delta (h V_{\ell m})= \Big(h_{rr}+{ 2\over r} h_r - {(\ell +1)(\ell +2)\over r^2} h  \Big) V_{\ell m},\\
&\Delta(h W_{\ell m})=  \Big(h_{rr}+{ 2\over r} h_r- {(\ell-1)\ell \over r^2} h  \Big) W_{\ell m},\\
&\Delta(h X_{\ell m})=  \Big(h_{rr}+{ 2\over r} h_r - {\ell (\ell +1)\over r^2} h  \Big) X_{\ell m}.
\endaligned
\ee
Equations~\eqref{divVSH}, \eqref{laplaceVSH} are given in~\cite{Hill}. 
Finally, the following identities hold, resulting  directly from the definition of the vector spherical harmonics: 
\bel{VtoS}
\nabla_{\! \rm s} Y_{\ell m} = {1\over 2\ell+1} (\ell  V_{\ell m}+ (\ell +1)W_{\ell m}), \quad Y_{\ell m}\hat r=  {1\over 2\ell+1} ( W_{\ell m}-V_{\ell m}). 
\ee

\section{Spectral properties of the layer potentials}

\label{sec:spectral}
We will give the main results in Section~\ref{sec:spectral1}, prepare some preliminary results in Section~\ref{sec:spectral2} and finally provide the proofs in Section~\ref{sec:spectral3}. 
\subsection{Main results}
\label{sec:spectral1}
Consider the single layer potential $\SL$ and the single layer boundary operator $\SLO$ defined by \eqref{single-layer} and \eqref{single-layer-boundary}, we have the following result. 
\begin{theorem}
\label{H&N}
  Let $\underline {Y_{\ell m}}$ be the matrix such that 
  \bel{marix-VSH}
\underline {Y_{\ell m}}= (V_{\ell m}|W_{\ell m}|X_{\ell m}):= (Y_{\ell m}^1|Y_{\ell m}^2|Y_{\ell m}^3). 
  \ee
Then we have: 
\begin{enumerate}
\item On the unit sphere $\mathbb S^2$, 
 \[
\SLO \underline {Y_{\ell m}}(x) = \underline {Y_{\ell m}}  A_{\SLO,\ell}, 
\]
where $\SLO$ is the single layer boundary operator  defined by \eqref{single-layer-boundary}  and $ A_{\SLO,\ell}$ is a constant matrix given by \bel{aon}
\aligned
 A_{\SLO,\ell}& = \begin{bmatrix}
      { (3\ell+1)\mu +\ell\lambda\over  ( 2\ell+3)(2\ell+1) \mu(2\mu+\lambda)}  &0  & 0 \\
    0 &{  (3\ell+2) \mu+ (\ell+1)\lambda \over (2\ell-1)(2\ell+1) \mu(2\mu+\lambda)} & 0 \\
    0&0& {1\over \mu (2\ell+1)}
\end{bmatrix} =  \mathop{\rm diag}(\tau_{\SLO,\ell}^1,\tau_{\SLO,\ell}^2, \tau_{\SLO,\ell}^3 ). 
\endaligned 
\ee
\item When  $|x| <1$, we have 
\[
(\SL  \underline {Y_{\ell m}})(x)=  \underline {Y_{\ell m}}   \left({x\over |x|}\right)A_{\SL,\ell}^{in}(x), 
\]
where  $\SL$ is  the single layer potential given by \eqref{single-layer} and the matrix  $  A_{\SL,\ell}^{in}(x)$ has the form
\bel{ain}
   A_{\SL,\ell}^{in}(x) = \begin{bmatrix}
   { (3\ell+1)\mu +\ell \lambda\over  ( 2\ell+3)(2\ell +1) \mu(2\mu+\lambda)} |x|^{\ell+1}  &0 & 0 \\
    {(\ell+1)(\mu+ \lambda)\over 2(2\ell+1)\mu (2\mu+ \lambda )} (|x| ^{\ell+1}- |x|^{\ell- 1})  & {  (3\ell+2)\mu+ (\ell+1)\lambda\over (2\ell-1)(2\ell+1) \mu(2\mu+\lambda)} |x|^{\ell-1} & 0 \\
    0&0& { 1\over(2\ell+1)\mu}|x|^\ell
\end{bmatrix}.
\ee
\item When $|x|>1$, we have 

\[
(\SL\underline {Y_{\ell m}} )(x)=  \underline {Y_{\ell m}} \left({x\over |x|}\right) A_{\SL,\ell}^{out}(x), 
\]
where the matrix $A_{\SL,\ell}^{out}(x)$ is given by  
\bel{aout}
 A_{\SL,\ell}^{out}(x) = \begin{bmatrix}
    {(3\ell+1)\mu +\ell \lambda \over ( 2\ell+3)(2\ell+1) \mu(2\mu+\lambda)}  |x|^{-\ell-2}  & {  \ell  (\mu+ \lambda )\over 2 (2\ell+1) \mu(2\mu+\lambda)}  (|x|^{-\ell-2}-|x|^{-\ell}) & 0 \\
  0 &  {  (3\ell+2)\mu+ (\ell+1)\lambda \over (2\ell-1)(2\ell+1) \mu(2\mu+\lambda)}  |x|^{-\ell} & 0 \\
    0&0& { 1\over(2\ell+1)\mu}|x|^{-\ell-1}
\end{bmatrix}. 
\ee
\end{enumerate}
\end{theorem}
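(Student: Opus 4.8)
The plan is to compute the single layer potential $\SL\underline{Y_{\ell m}}$ and its boundary restriction $\SLO\underline{Y_{\ell m}}$ by reducing everything to the scalar Newtonian potential on the sphere. The starting point is to split the Green's function \eqref{Green} as $G(x) = c_1 \frac{1}{|x|}\mathop{\rm Id} + c_2 \frac{x x^\top}{|x|^3}$ with $c_1 = \frac{\lambda+3\mu}{8\pi\mu(\lambda+2\mu)}$ and $c_2 = \frac{\lambda+\mu}{8\pi\mu(\lambda+2\mu)}$. For the first term, $\int_\Gamma \frac{1}{|x-y|}\,\varphi(y)\,dy$ applied componentwise is just the classical single layer potential for the Laplacian. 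For a density of the form $\varphi = f\, g$ where $f$ is a scalar spherical harmonic $Y_{\ell m}$ and $g$ is a fixed direction field, one uses the well-known expansion $\frac{1}{|x-y|} = \sum_{\ell} \frac{|x|^\ell}{|y|^{\ell+1}} \frac{4\pi}{2\ell+1}\sum_m Y_{\ell m}(\hat x)Y_{\ell m}(\hat y)$ (interior) and its exterior analogue, together with the $L^2(\mathbb S^2)$-orthonormality of the $Y_{\ell m}$, to collapse the surface integral. This produces the scalar factors $\frac{|x|^\ell}{2\ell+1}$ (interior) or $\frac{|x|^{-\ell-1}}{2\ell+1}$ (exterior) — these are exactly what appear in the $X$-column of \eqref{ain}, \eqref{aout} and in the lower-right entry of \eqref{aon}.

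The genuinely new work is the $\frac{x x^\top}{|x|^3}$ term. Here I would write $\frac{(x-y)(x-y)^\top}{|x-y|^3} = -\nabla_x\nabla_x^\top |x-y|$ (as a $3\times 3$ Hessian, since $\nabla\nabla^\top |z| = \frac{1}{|z|}\mathop{\rm Id} - \frac{zz^\top}{|z|^3}$), so that $\int_\Gamma \frac{(x-y)(x-y)^\top}{|x-y|^3}\varphi(y)\,dy = c_1'\,\mathrm{(Laplace\ SLP)} - \nabla\nabla^\top\!\big(\mathrm{scalar\ potential\ of\ }\varphi\big)$ up to constants. Thus everything reduces to: (i) the scalar single layer potential of each component of $\underline{Y_{\ell m}}$, and (ii) applying $\nabla$ and $\nabla\nabla^\top$ to a radial-times-angular function of the form $h(|x|)Y_{\ell m}(\hat x)$ and to $h(|x|)Y_{\ell m}(\hat x)\hat r$. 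Step (ii) is precisely what the derivative identities \eqref{grad}, \eqref{divVSH}, \eqref{laplaceVSH} and the conversion formulas \eqref{VtoS} are set up to handle: applying $\nabla$ to $h Y_{\ell m}$ gives $h_r Y_{\ell m}\hat r + \frac{h}{r}\nabla_{\!\rm s}Y_{\ell m}$, which by \eqref{VtoS} is a linear combination of $V_{\ell m}$ and $W_{\ell m}$; iterating once more (and using $\nabla_{\!\rm s}\hat r$, handled via \eqref{product-rules}) keeps one inside the span of $\{V_{\ell m}, W_{\ell m}\}$ with radial coefficients that are polynomials in $|x|$ of the types $|x|^{\ell+1}, |x|^{\ell-1}$ (interior) and $|x|^{-\ell-2}, |x|^{-\ell}$ (exterior). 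Collecting these coefficients into a $3\times 3$ matrix acting on the right of $\underline{Y_{\ell m}}(\hat x)$ yields \eqref{ain} and \eqref{aout}; the boundary matrix \eqref{aon} is then obtained by setting $|x|=1$ in either formula (they agree there, consistent with $\jump{\SL\phi}=0$ from Theorem~\ref{Th:jump-relations}), and one checks the off-diagonal entries vanish at $|x|=1$.

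A cleaner variant — which I would actually prefer to carry out — avoids the Hessian bookkeeping by observing that $\SL\phi$ solves ${\bf L}\,\SL\phi = 0$ in $B$ and in $\mathbb R^3\setminus\overline B$ (Lemma~\ref{lemma-single}), is continuous across $\mathbb S^2$ (Theorem~\ref{Th:jump-relations}), decays at infinity, and satisfies the Neumann jump $\jump{\mathcal T\,\SL\phi} = \phi$. So the strategy is: (a) find the general ${\bf L}$-harmonic field inside $B$ of the form $\underline{Y_{\ell m}}(\hat x)\,M^{in}(|x|)$ — plugging the ansatz $h(r)V_{\ell m}, h(r)W_{\ell m}, h(r)X_{\ell m}$ into ${\bf L}u = -\mu\Delta u - (\mu+\lambda)\nabla\mathop{\rm div}u = 0$ and using \eqref{divVSH}, \eqref{laplaceVSH}, \eqref{grad}, \eqref{VtoS} gives an ODE system in the radial profiles whose bounded-at-$0$ solutions are spanned by $|x|^{\ell+1}$ and $|x|^{\ell-1}$ (and $|x|^\ell$ in the $X$-sector); similarly $|x|^{-\ell-2}, |x|^{-\ell}, |x|^{-\ell-1}$ outside; (b) impose continuity at $|x|=1$ and the jump of the traction $\mathcal T_{\bf n}$ across $\mathbb S^2$, which is a small linear system in each degree $\ell$ and each of the three sectors; (c) solve it to pin down the constants, recovering the matrices \eqref{ain}, \eqref{aout}, \eqref{aon}. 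The single new normalization constant is fixed by matching against the already-known scalar Laplace single layer potential in the $X$-sector, or by one explicit evaluation of \eqref{single-layer} at a convenient point.

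The main obstacle I anticipate is bookkeeping rather than conceptual: getting the traction operator $\mathcal T_{\bf n} = \gamma(\mathcal T\,\cdot\,{\bf n})$ right on fields of the form $h(r)V_{\ell m}$, etc., since $\mathcal T\varphi\,{\bf n} = 2\mu\,e(\varphi)\hat r + \lambda(\mathop{\rm div}\varphi)\hat r$ requires computing the $\hat r$-contraction of the strain tensor of a VSH-valued field, which mixes the $V$ and $W$ components through $\nabla_{\!\rm s}\hat r$ and the $r$-derivatives. This is where sign errors and misplaced factors of $(2\ell+1)$ are most likely to creep in, and where the orthogonality relations \eqref{orthogonal} and identities \eqref{VtoS} must be used carefully to re-express the result back in the $(V_{\ell m}|W_{\ell m}|X_{\ell m})$ basis. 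Once the traction action on each radial power is tabulated, assembling and solving the $2\times 2$ (resp.\ $1\times 1$) jump systems per degree is routine linear algebra, and verifying that the resulting interior/exterior matrices agree on $|x|=1$ provides a useful internal consistency check.
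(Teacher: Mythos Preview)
Your ``cleaner variant'' is exactly the proof the paper gives: separation of variables via the ansatz $f(r)V_{\ell m}+g(r)W_{\ell m}+h(r)X_{\ell m}$, the ODE system obtained from \eqref{divVSH}--\eqref{VtoS}, the six radial solutions (three regular at the origin, three decaying at infinity), and then the two jump relations $\llbracket\SL\phi\rrbracket=0$, $\llbracket\mathcal T\,\SL\phi\rrbracket=\phi$ to pin down all six constants. The paper also isolates exactly the step you flag as the main obstacle --- computing $\mathcal T_{\bf n}$ on $h(r)V_{\ell m}$, $h(r)W_{\ell m}$, $h(r)X_{\ell m}$ --- as a separate preliminary lemma (formula \eqref{Tfgh}). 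One small remark: no extra ``normalization constant'' needs to be fixed by matching with the scalar Laplace potential; the inhomogeneous traction jump $\llbracket\mathcal T\,\SL\phi\rrbracket=\phi$ already determines everything, so your step (c) by itself closes the system.

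Your first approach --- splitting $G$ into the $|x|^{-1}\mathop{\rm Id}$ piece and the Hessian piece $\nabla\nabla^\top|x-y|$ and reducing to scalar potentials --- is a genuinely different and valid route that the paper does not take. It trades the ODE/jump-condition linear algebra for the spherical-harmonic expansion of the kernel $|x-y|$ (not just $|x-y|^{-1}$) and explicit Hessian bookkeeping on $h(r)Y_{\ell m}$. The advantage is that it is fully constructive and never uses the PDE characterization of $\SL\phi$; the disadvantage is that the intermediate expressions are heavier, and the $V$--$W$ mixing has to be tracked through two differentiations rather than read off from a linear system. Either method lands on the same matrices.
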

The following result is a corollary of Theorem~\ref{H&N}. 
\begin{corollary}
\label{corollaryH&N}
 Let $\underline {Y_{\ell m}}$ be the a matrix defined by  \eqref{marix-VSH}. 
Then, on the unit sphere $\mathbb S^2$, there holds
 \[
\DLO ^*  \underline {Y_{\ell m}} = \DLO  \underline {Y_{\ell m}} = \underline {Y_{\ell m}}  A_{ \DLO ^*,\ell},
\]
where $ \DLO $ is the  double layer boundary operator \eqref{double-boundary*}  with its adjoint $\DLO ^* $ and $A_{\DLO^*,\ell}$ is a constant and diagonal matrix: 
\bel{adon}A_{\DLO^*,\ell} = \begin{bmatrix}
      -  {2(2\ell^2+6\ell+1)\mu -3 \lambda\over 2 (2\ell+1)(2\ell+3) (2\mu+\lambda)}  &0  & 0 \\
    0 & {2(2\ell^2-2\ell-3 )\mu-  3 \lambda\over  2 (2\ell+1)(2\ell-1) (2\mu+\lambda)}   & 0 \\
    0&0& {1\over 2\mu (2\ell+1)}
\end{bmatrix} = \mathop{\rm diag}(\tau_{\DLO ^*,\ell}^1,\tau_{\DLO ^*,\ell}^2, \tau_{\DLO ^*,\ell}^3 ). \ee
\end{corollary}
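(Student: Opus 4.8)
The plan is to derive Corollary~\ref{corollaryH&N} directly from Theorem~\ref{H&N} together with the interior/exterior traction relations for the single layer potential in Theorem~\ref{double-in/out}. Adding the two relations in~\eqref{boundary-neuman} gives $\DLO^*\phi=\tfrac12\bigl(\mathcal T_{\bf n}^-\SL\phi+\mathcal T_{\bf n}^+\SL\phi\bigr)$, so it suffices to compute, column by column, the interior and exterior tractions of $\SL\underline{Y_{\ell m}}$ from the explicit formulas $(\SL\underline{Y_{\ell m}})(x)=\underline{Y_{\ell m}}(x/|x|)\,A_{\SL,\ell}^{in}(x)$ for $|x|<1$ and $(\SL\underline{Y_{\ell m}})(x)=\underline{Y_{\ell m}}(x/|x|)\,A_{\SL,\ell}^{out}(x)$ for $|x|>1$ provided by Theorem~\ref{H&N}. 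Since every entry of $A_{\SL,\ell}^{in}$ and $A_{\SL,\ell}^{out}$ is a constant times a monomial $|x|^a$, the whole computation reduces to evaluating the stress field $\mathcal T u\,{\bf n}$ on functions of the form $u=|x|^a Z_{\ell m}$, $Z\in\{V,W,X\}$, and then restricting to $\mathbb S^2$.

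The second step is an auxiliary traction formula for such $u$. Being positively homogeneous of degree $a$, $u$ satisfies $(\nabla u){\bf n}=a\,u$ on $\mathbb S^2$ by Euler's identity, while the pointwise relation $(\nabla u)^\top x=\nabla(u\cdot x)-u$ controls $(\nabla u)^\top{\bf n}$; hence on $\mathbb S^2$ one has $e(u){\bf n}=\tfrac12\bigl((a-1)u+\nabla(u\cdot x)\bigr)$ and $\mathcal T u\,{\bf n}=2\mu\,e(u){\bf n}+\lambda\,(\mathop{\rm div}u)\,{\bf n}$. The radial part $u\cdot x$ is read off from $\hat r\cdot V_{\ell m}=-(\ell+1)Y_{\ell m}$, $\hat r\cdot W_{\ell m}=\ell Y_{\ell m}$, $\hat r\cdot X_{\ell m}=0$ (immediate from~\eqref{vector-harmonics} and $\nabla_{\! \rm s}Y_{\ell m}\perp{\bf n}$), its gradient from~\eqref{grad}, and the divergence term from~\eqref{divVSH}; re-expanding everything in $V_{\ell m},W_{\ell m},X_{\ell m}$ via~\eqref{VtoS} yields $\mathcal T(|x|^a X_{\ell m})\,{\bf n}\big|_{\mathbb S^2}=\mu(a-1)X_{\ell m}$ together with an explicit $2\times2$ matrix, depending on $a,\ell,\mu,\lambda$, describing the action on the pair $(V_{\ell m},W_{\ell m})$.

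Inserting the exponents that actually occur — $a=\ell$ and $a=-\ell-1$ on the $X$-columns, and $a\in\{\ell+1,\ell-1\}$ (interior) resp. $a\in\{-\ell-2,-\ell\}$ (exterior) on the $V,W$-columns — weighted by the coefficients of~\eqref{ain} and~\eqref{aout}, and averaging the interior and exterior values, one checks that the off-diagonal $V\!\leftrightarrow\!W$ contributions cancel and that the diagonal entries simplify, after extracting the common factor $(2\ell+1)\mu(2\mu+\lambda)$, to exactly the $\tau_{\DLO^*,\ell}^j$ displayed in~\eqref{adon}; no $\tfrac12\,\mathop{\rm Id}$ has to be subtracted, the averaging in~\eqref{boundary-neuman} having already removed it. It then remains to show $\DLO\underline{Y_{\ell m}}=\underline{Y_{\ell m}}A_{\DLO^*,\ell}$ with the \emph{same} (diagonal) matrix, for which I would use that $\DLO^*$ is the $L^2(\mathbb S^2)^3$-adjoint of $\DLO$ (realizing the $\Hh(\Gamma)^3$--$\Hnh(\Gamma)^3$ duality): pairing $\DLO Y_{\ell m}^i$ with an arbitrary $Y_{\ell'm'}^j$ and invoking the eigenrelation for $\DLO^*$ just obtained together with the orthogonality~\eqref{orthogonal} shows this pairing vanishes unless $(i,\ell,m)=(j,\ell',m')$, and completeness~\eqref{expand} then forces $\DLO Y_{\ell m}^i=\tau_{\DLO^*,\ell}^i\,Y_{\ell m}^i$.

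The only genuine difficulty is the bookkeeping in the middle steps: correctly tracking the homogeneity degrees, the surface-gradient product rules, and the $V,W$ re-expansions so that the cross terms cancel and the rational expressions in $\mu,\lambda$ collapse to~\eqref{adon}. Nothing here is conceptually deep, but it is error-prone; computing \emph{both} the interior and the exterior tractions — rather than one of them and subtracting $\tfrac12$ — is worthwhile precisely because their difference must reproduce the jump $\jump{\mathcal T\SL\phi}=\phi$ of Theorem~\ref{Th:jump-relations}, which provides a built-in consistency check.
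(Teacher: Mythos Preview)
Your proposal is correct and, for the $\DLO^*$ part, follows the same route as the paper: the paper also writes $2\DLO^*V_{\ell m}=\mathcal T_{\bf n}^+\SL V_{\ell m}+\mathcal T_{\bf n}^-\SL V_{\ell m}$ from~\eqref{boundary-neuman} and then evaluates the tractions using the explicit form of $\SL V_{\ell m}$ from Theorem~\ref{H&N}. The only cosmetic difference is that the paper has already packaged the traction of $f(r)V_{\ell m}+g(r)W_{\ell m}+h(r)X_{\ell m}$ into the ready-made formula~\eqref{Tfgh} (derived via surface-gradient identities in a preliminary lemma), whereas you redo this computation via Euler's identity and $(\nabla u)^\top x=\nabla(u\cdot x)-u$; the two calculations are equivalent.

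The one place where you genuinely diverge is the passage from $\DLO^*$ to $\DLO$. The paper invokes Lemma~\ref{stars}, which shows by a direct symmetry of the kernel (using that on a sphere $(x-y)\cdot{\bf n}(x)=(y-x)\cdot{\bf n}(y)$) that $\DLO v=\DLO^* v$ for every $v\in L^2(\mathbb S^2)^3$. Your argument instead uses that $\DLO^*$ is the $L^2$-adjoint of $\DLO$, the eigenrelation just proved, orthogonality~\eqref{orthogonal}, and completeness~\eqref{expand}. Both are valid; the paper's lemma is a stronger pointwise statement and reusable for any density, while your argument is self-contained and avoids the kernel computation but only yields the equality on the eigenbasis (which is all that is needed here).
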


\begin{remark}
\label{-1/2}
Recall that the Lam\'e constats $\mu, \lambda$ satisfy $\mu>0, 2\mu+3\lambda>0$, we can verify that the eigenvalue $\tau_{\DLO ^*,\ell}^k$ of the adjoint double layer boundary operator  is $-{1\over 2}$ if and only if $\ell=1, k=2$. And the eigenvectors associated with the eigenvalue $-{1\over 2}$ are $W_{1m}$, $m=\pm 1,0$. 
\end{remark}
The following theorem gives explicit expressions of the double layer potential. 
\begin{theorem}
\label{H&N'}
Let $\underline {Y_{\ell m}}$ be  given by \eqref{marix-VSH} and $\DL$ the double layer potential on the unit sphere introduced in \eqref{double-layer}, we have:
\begin{enumerate}
\item For $|x|<1$,
\[
(\DL \underline{Y_{\ell m}})(x) =  \underline {Y_{\ell m}} \Big({x\over |x|}\Big) A_{\DL,\ell}^{in}(x)
\]
where 
\bel{ain'}
   A_{\DL,\ell}^{in}(x) = \begin{bmatrix}
a^{in,\DL,\ell}_{11}  |x|^{\ell+1}  &a^{in,\DL,\ell}_{12}   |x|^{\ell+1}& 0 \\
 a^{in,\DL,\ell}_{21,1} |x| ^{\ell+1}+ a^{in,\DL,\ell}_{21,2} |x|^{\ell-1} &a^{in,\DL,\ell}_{22,1} |x|^{\ell+1}+a^{in,\DL,\ell}_{22,2}  |x|^{\ell-1}& 0 \\
    0&0& -{\ell+1\over(2\ell+1)\mu}|x|^\ell
\end{bmatrix}.
\ee
\item For $|x|>1$,
\[
(\DL \underline{Y_{\ell m}})(x) =  \underline {Y_{\ell m}} \Big({x\over |x|}\Big)A_{\DL,\ell}^{out}(x)
\]
where 
\bel{aout'}
   A_{\DL,\ell}^{out}(x) = \begin{bmatrix}
a^{out,\DL,\ell}_{11,1}|x|^{-\ell-2} +a^{out,\DL,\ell}_{11,2}|x|^{-\ell}&a^{out,\DL,\ell}_{12,1} |x|^{-\ell-2} + a^{out,\DL,\ell}_{12,2}|x|^{-\ell} & 0 \\
    a^{out,\DL,\ell}_{21} |x|^{-\ell}  &a^{out,\DL,\ell}_{22}  |x|^{-\ell}& 0 \\
    0&0& { \ell\over(2\ell+1)\mu}|x|^{-\ell-1}
\end{bmatrix}.
\ee\end{enumerate}
The constants $a^{in,\DL,\ell}_{ij}, a^{out,\DL,\ell}_{ij}  $ are listed in Appendix B. 
\end{theorem}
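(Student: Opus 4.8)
The plan is to reduce the computation of $\DL\underline{Y_{\ell m}}$ to quantities already computed in Theorem~\ref{H&N} by exploiting the jump relations of Theorem~\ref{Th:jump-relations} together with the fact (Lemma~\ref{lemma-double}) that $\DL\varphi$ is ${\bf L}$-harmonic on $\mathbb R^3\backslash\Gamma$. Concretely, for each column $Y_{\ell m}^k$ of $\underline{Y_{\ell m}}$, the function $u := \DL Y_{\ell m}^k$ solves ${\bf L}u = 0$ separately in $B$ and in $\mathbb R^3\backslash\overline{B}$, so on each sphere $|x|=$ const it must expand in the basis $\{V_{\ell m},W_{\ell m},X_{\ell m}\}$ of the \emph{same} degree $\ell$ and order $m$ (since ${\bf L}$ commutes with rotations and the double layer kernel is convolution-type on the sphere, no coupling across $(\ell,m)$ or across the three families beyond the $V$--$W$ mixing already visible in Theorem~\ref{H&N} can occur). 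The radial dependence is then forced: the interior solution must be a combination of the ${\bf L}$-harmonic radial profiles $|x|^{\ell+1}$ and $|x|^{\ell-1}$ attached to $V_{\ell m},W_{\ell m}$ respectively (plus the extra $|x|^{\ell+1}$ mode that can ride on $W$), and symmetrically $|x|^{-\ell-2},|x|^{-\ell}$ in the exterior; $X_{\ell m}$ decouples entirely with profile $|x|^\ell$ resp.\ $|x|^{-\ell-1}$. This fixes the \emph{form} of the matrices $A_{\DL,\ell}^{in}$, $A_{\DL,\ell}^{out}$ in \eqref{ain'}--\eqref{aout'} up to the scalar coefficients.

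To pin down the coefficients I would set up a small linear system per column from the four interface conditions in \eqref{jump-relation}: $\jump{\DL\varphi} = -\varphi$ and $\jump{\mathcal T\DL\varphi} = 0$. The first gives, reading off the $V,W,X$ components on $|x|=1$, two (or one, for $X$) scalar equations relating the interior and exterior coefficients to the components of $\varphi = Y_{\ell m}^k$ in the $V/W/X$ decomposition — here I use \eqref{VtoS} to resolve $\nabla_{\!\rm s}Y_{\ell m}$ and $Y_{\ell m}\hat r$ when needed. The second condition, continuity of the traction $\mathcal T_{\bf n}\DL\varphi$, requires applying the stress operator \eqref{stress} to each $|x|^{\pm\text{power}}\,(V$ or $W$ or $X)$ field and taking the normal component; this is exactly the kind of computation underlying Theorem~\ref{double-in/out}, and the relevant ingredients are the identities \eqref{divVSH}, \eqref{laplaceVSH}, \eqref{grad} for how $\nabla$, $\mathop{\rm div}$ act on $h(r)$ times a vector spherical harmonic, plus the product rules \eqref{product-rules}. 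Matching these traction values across $|x|=1$ supplies the remaining equations, and solving the resulting $2\times2$ (or $1\times1$) systems yields the constants $a^{in,\DL,\ell}_{ij}$, $a^{out,\DL,\ell}_{ij}$ recorded in Appendix~B.

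An alternative, and probably cleaner, route is via the representation formula $\DL = \mathcal N\mathcal T_{\bf n}^*$ from \eqref{other-def}, or more directly the Calderón-type identity linking $\DL\varphi$ to $\SL$ applied to a traction: since $\gamma\DL$ and $\SLO$ are already diagonalized (Theorem~\ref{H&N}, Corollary~\ref{corollaryH&N}), one can write $\DL\varphi$ as $\SL$ of the appropriate density plus a correction and transport the known matrices $A_{\SL,\ell}^{in/out}$, $A_{\DLO^*,\ell}$ through. Either way, the \emph{main obstacle} is the bookkeeping for the traction operator $\mathcal T_{\bf n} = \gamma(\mathcal T\cdot\,{\bf n})$ acting on a general $h(r)V_{\ell m}$ or $h(r)W_{\ell m}$: one needs $e(u)=\tfrac12(\nabla u + \nabla u^\top)$ and then $\nabla u\,{\bf n}$ for these anisotropic-looking but explicitly known fields, and the $V$--$W$ mixing means the $2\times2$ blocks are genuinely non-diagonal, so sign and normalization errors (compounded by the non-unit $L^2$ norms in \eqref{orthogonal}) are easy to make. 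I would double-check the final constants against two independent consistency tests: (i) the trace of $A_{\DL,\ell}^{in}$ on $|x|=1$ minus that of $A_{\DL,\ell}^{out}$ on $|x|=1$ must reproduce $-\mathrm{Id}$ in the $V,W,X$ coordinates via \eqref{jump-relation}, and (ii) the $X$-block, which decouples, must match the known scalar double-layer eigenvalue $-\tfrac{\ell+1}{(2\ell+1)\mu}\cdot(\text{interior})$ consistent with $\tau^3_{\DLO^*,\ell} = \tfrac{1}{2\mu(2\ell+1)}$ and the half-jump relations.
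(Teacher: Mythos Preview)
Your primary approach is correct and essentially identical to the paper's: the paper simply states that the proof of Theorem~\ref{H&N'} ``follows the same structure to that of Theorem~\ref{H&N}'' by writing $\DL Y_{\ell m}^k$ in the separated ansatz built from the six ${\bf L}$-harmonic radial solutions already found there and then fixing the coefficients via the jump relations $\llbracket \DL\varphi\rrbracket = -\varphi$, $\llbracket \mathcal T\DL\varphi\rrbracket = 0$, with the traction computed through~\eqref{Tfgh}. Your alternative Calder\'on-type route and the consistency checks are reasonable additions but not pursued in the paper.
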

The results of Theorems~\ref{H&N}, \ref{H&N'} and Corollary~\ref{corollaryH&N}
 can be extended to any sphere $\Gamma_r(x_0) = \partial B_r(x_0)$, which is, a sphere centered at $x_0$ with radius $r_0$,  by setting $\underline {Y_{\ell m}}\Big({x-x_0\over|x-x_0| }\Big)$ and using the following scaling in $r$: 
\bel{H&N-scaling}
\begin{array}{l}
(\SL \underline {Y_{\ell m}})(x)= r\underline {Y_{\ell m}}\Big({x-x_0\over|x-x_0| }\Big)A_{\SL,\ell}^{in/out}\Big({x-x_0\over r }\Big), \quad( \DL \underline {Y_{\ell m}})(x) = \underline {Y_{\ell m}}\Big({x-x_0\over|x-x_0| }\Big)A_{\DL,\ell}^{in/out}\Big({x-x_0\over r }\Big), \\
(\SLO \underline {Y_{\ell m}})(x)= r\underline {Y_{\ell m}}\Big({x-x_0\over|x-x_0| }\Big)A_{\SLO,\ell}, \quad (\DLO^*\underline {Y_{\ell m}})(x) =(\DLO\underline {Y_{\ell m}})(x) =  \underline {Y_{\ell m}}\Big({x-x_0\over|x-x_0| }\Big)A_{\DLO^*,\ell}. 
\end{array}
\ee

\subsection{Preliminary lemmas}
\label{sec:spectral2}
To prove the results of Section~\ref{sec:spectral1} in the upcoming Section~\ref{sec:spectral3}, we derive first several preliminary lemma.
\begin{lemma}
For a scalar function $h=h(r)$, we have the following identities
\bel{T_N}
\aligned  &2e\Big( h(r)V_{\ell m} \Big){\bf n}| _{\mathbb S^2}
=\Big({( 3\ell+2)h_r (1)- \ell ( \ell+2) h(1)\over 2\ell+1}\Big)V_{\ell m}+ \Big({-(\ell+1)h_r(1)-(\ell+1)(\ell+2)h(1)\over 2\ell+1} \Big)W_{\ell m}, \\
& 2e \Big(h(r)W_{\ell m}\Big) {\bf n}| _{\mathbb S^2}
=  \Big({-\ell h_r (1)+\ell(\ell -1) h(1)\over 2\ell+1}\Big)V_{\ell m}+ \Big({(3\ell+1)h_r(1)+(\ell-1)(\ell+1) h(1)\over 2\ell+1} \Big)W_{\ell m}, 
\\  & 2e\Big(h(r)X_{\ell m})\Big) {\bf n}| _{\mathbb S^2}= \Big(h_r(1)-h(1) \Big)X_{\ell m}, 
\endaligned 
\ee
 where ${\bf n}$ is the outward pointing unit normal vector of the unit sphere $\mathbb S^2$.  
\end{lemma}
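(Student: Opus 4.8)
The plan is to compute the three quantities in \eqref{T_N} directly from the definitions, reducing everything to the radial/angular decomposition established in Section~\ref{sec:VSH}. The key observation is that on $\mathbb S^2$ the outward normal is $\mathbf{n} = \hat r$, and the vector spherical harmonics split cleanly into a tangential part (a multiple of $\nabla_{\!\rm s} Y_{\ell m}$ or $\hat r \times \nabla_{\!\rm s} Y_{\ell m}$) and a radial part (a multiple of $Y_{\ell m}\hat r$). So the strategy is: first write $2e(hF)\mathbf{n} = (\nabla(hF) + \nabla(hF)^\top)\hat r$ using the gradient conventions \eqref{gradient-vector}, then apply the product rule \eqref{grad} to get $\nabla(hF) = h_r F\hat r^\top + \tfrac1r h\,\nabla_{\!\rm s} F$, and finally evaluate at $r=1$ and resolve the result back into the $V_{\ell m},W_{\ell m},X_{\ell m}$ basis using \eqref{VtoS}.

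First I would treat the $X_{\ell m}$ case, which is the cleanest: since $X_{\ell m} = \hat r \times \nabla_{\!\rm s} Y_{\ell m}$ is purely tangential and orthogonal to both $\hat r$ and $\nabla_{\!\rm s} Y_{\ell m}$, the term $\nabla(hX_{\ell m})\hat r$ contracts to $h_r X_{\ell m}(\hat r^\top \hat r) = h_r X_{\ell m}$ at $r=1$, while the transpose term $(\nabla(hX_{\ell m}))^\top\hat r$ contributes $\tfrac1r h (\nabla_{\!\rm s} X_{\ell m})^\top \hat r$; one needs the surface-gradient identity $(\nabla_{\!\rm s} X_{\ell m})^\top\hat r = -X_{\ell m}$, which follows from differentiating $X_{\ell m}^\top \hat r = 0$ along the sphere (using the product rule \eqref{product-rules} and that $\nabla_{\!\rm s}\hat r$ applied appropriately gives the identity on the tangent plane — equivalently this is the standard $\nabla_{\!\rm s}(X^\top\hat r)=0$ computation). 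That yields $(h_r(1)-h(1))X_{\ell m}$ as claimed.

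For the $V_{\ell m}$ and $W_{\ell m}$ cases I would write $hV_{\ell m} = h\,\nabla_{\!\rm s} Y_{\ell m} - (\ell+1)hY_{\ell m}\hat r$ and handle the two summands separately. For a term of the form $h\,\nabla_{\!\rm s} Y_{\ell m}$ one uses \eqref{grad} (second line with $H=\nabla_{\!\rm s} Y_{\ell m}$) to get $h_r(\nabla_{\!\rm s} Y_{\ell m})\hat r^\top + \tfrac1r h\,\nabla_{\!\rm s}(\nabla_{\!\rm s} Y_{\ell m})$; contracting the symmetrized gradient with $\hat r$ at $r=1$ produces $h_r(1)\nabla_{\!\rm s} Y_{\ell m}$ from the first piece plus a contribution from $\nabla_{\!\rm s}(\nabla_{\!\rm s} Y_{\ell m})^\top\hat r$, which by the same "differentiate the orthogonality relation" trick equals $-\nabla_{\!\rm s} Y_{\ell m}$ (since $(\nabla_{\!\rm s} Y_{\ell m})^\top\hat r = 0$ is actually automatic, but the normal component of the surface Hessian picks up the curvature term). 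For a term of the form $hY_{\ell m}\hat r$ one uses \eqref{grad} (first line, treating $hY_{\ell m}$ as scalar times $\hat r$ — more carefully, $\nabla(h Y_{\ell m}\hat r)$ via the product and the known $\nabla\hat r = \tfrac1r(\mathrm{Id}-\hat r\hat r^\top)$) and symmetrizes. Collecting all contributions gives expressions in $\nabla_{\!\rm s} Y_{\ell m}$ and $Y_{\ell m}\hat r$ with coefficients linear in $h_r(1)$ and $h(1)$, and then one substitutes \eqref{VtoS} to convert to the $V_{\ell m},W_{\ell m}$ basis and matches coefficients.

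The main obstacle is bookkeeping the second-order angular terms correctly: the contraction $(\nabla_{\!\rm s}(\nabla_{\!\rm s} Y_{\ell m}))^\top \hat r$ and the normal derivative of $\hat r$ both generate curvature ($1/r$) contributions that must be pinned down with the right sign and the right multiple, and it is easy to drop a factor of $\ell$ or $(\ell+1)$ when reassembling via \eqref{VtoS}. I would organize this by first establishing a small auxiliary identity — essentially $2e(hF)\mathbf n|_{\mathbb S^2} = h_r(1)F + h(1)\,(\text{tangential/normal projection correction of }F)$ for the three archetypes $F\in\{\nabla_{\!\rm s} Y_{\ell m},\,Y_{\ell m}\hat r,\,\hat r\times\nabla_{\!\rm s} Y_{\ell m}\}$ — and then the three formulas in \eqref{T_N} follow by linear combination and one application of \eqref{VtoS}. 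A useful sanity check at the end: the $X$-row coefficient $h_r(1)-h(1)$ should be consistent with the Laplacian/divergence identities \eqref{divVSH}, \eqref{laplaceVSH}, and the symmetry of the resulting $2\times2$ coefficient blocks (after accounting for the orthogonality weights in \eqref{orthogonal}) reflects the self-adjointness of $\mathcal T_{\mathbf n}$, which I would use to catch arithmetic slips.
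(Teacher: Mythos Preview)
Your proposal is correct and follows essentially the same route as the paper: split $2e(hF)\mathbf n|_{\mathbb S^2}$ via \eqref{grad} into $h_r(1)(F+\hat r\, F^\top\hat r)+h(1)(\nabla_{\!\rm s}F+\nabla_{\!\rm s}F^\top)\hat r$, evaluate the surface-gradient contractions with $\hat r$, and resolve the result through \eqref{VtoS}. Your organization around the three archetypes $\nabla_{\!\rm s}Y_{\ell m}$, $Y_{\ell m}\hat r$, $\hat r\times\nabla_{\!\rm s}Y_{\ell m}$ and the unified identity $(\nabla_{\!\rm s}F)^\top\hat r=-F$ for tangential $F$ (obtained by differentiating $F^\top\hat r=0$) is a mild streamlining of the paper's case-by-case computation---the paper instead works directly with $V_{\ell m},W_{\ell m},X_{\ell m}$ and uses the cross-product rules \eqref{gradient-cross}--\eqref{cross-inner} for the $X_{\ell m}$ term---but the substance is identical.
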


\begin{proof}
We consider $h(r) V_{\ell m}$ first.  Following \eqref{product-rules} and \eqref{grad}, we have, 
\bel{DV}
\aligned 
 \Big(\nabla  (h(r)V_{\ell m})+ \nabla  (h(r)V_{\ell m})^\top \Big){\bf n}|_{\mathbb S^2}= & h_r(1)\big( V_{\ell m}  \hat r ^\top +  \hat r V_{\ell m}^\top\Big)  {\bf n}+ h(r)(\nabla_{\! \rm s} V_{\ell m}+ \nabla_{\! \rm s} V_{\ell m}^\top) {\bf n}| _{\mathbb S^2}\\ 
 = & h_r(1)\big( V_{\ell m}+ \hat r V_{\ell m}^\top \big)\hat r+ h(1)(\nabla_{\! \rm s} V_{\ell m}+ \nabla_{\! \rm s} V_{\ell m}^\top) \hat r,
 \endaligned 
\ee
where we also use the fact that ${\bf n}$ is  equal to the radial  basis $\hat r $ on the unit sphere $\mathbb S^2$. 
Consider now the first term $ h_r(1)( V_{\ell m}+ \hat r V_{\ell m}^\top )  \hat r $. To compute $ \hat r V_{\ell m}^\top   \hat r$, we need first the  relation $\nabla_{\! \rm s}  Y_{\ell m}^\top \hat r=0$ following from \eqref{surface}.  Then according to the definition of the vector spherical harmonics $V_{\ell m}$ given by \eqref{vector-harmonics}, we have 
\bel{turn}
(\hat r  V_{\ell m}^\top) \hat r= ( \hat r \nabla_{\! \rm s}  Y_{\ell m}^\top ) \hat r - (\ell+1)  Y_{\ell m} \hat r \hat r ^\top \hat r= -(\ell+1) Y_{\ell m} \hat r=  {(\ell+1) \over 2\ell+1} ( -W_{\ell m}+V_{\ell m}). 
\ee
Therefore, the first term in \eqref{DV} yields 
\[
 V_{\ell m}+ \hat r V_{\ell m}^\top  \hat r={ 3\ell+2\over 2\ell+1 }V_{\ell m} -{\ell+1\over 2\ell+1}W_{\ell m}. 
\]
Now consider the second term $(\nabla_{\! \rm s} V_{\ell m}+ \nabla_{\! \rm s} V_{\ell m}^\top) \hat r$.  According to \eqref{surface}, we have $\nabla_{\! \rm s} V_{\ell m}  \hat r=0$.  The definition of $V_{\ell m}$ \eqref{vector-harmonics} gives 
\[
\aligned 
\nabla_{\! \rm s} V_{\ell m} ^\top  \hat r= \nabla_{\! \rm s}( \nabla_{\! \rm s}Y_{\ell m})^\top\hat r  -(\ell+1) \nabla_{\! \rm s}(Y_{\ell m} \hat r )^\top\hat r.
\endaligned 
\]
We compute the two terms separately. Using the product rule  \eqref{product-rules}, we have 
\[
\nabla_{\! \rm s}( \nabla_{\! \rm s}Y_{\ell m})^\top \hat r= \nabla_{\! \rm s}( \nabla_{\! \rm s}Y_{\ell m} ^\top \hat r) -  \nabla_{\! \rm s} \hat r^\top  \nabla_{\! \rm s} Y_{\ell m}= -(\mathop{\rm Id} - \hat r \hat r ^\top)^\top \nabla_{\! \rm s} Y_{\ell m}= -\nabla_{\! \rm s} Y_{\ell m}. 
\]
For  $  \nabla_{\! \rm s}(Y_{\ell m} \hat r )^\top \hat r $, we use  \eqref{product-rules} and have 
\[
 \nabla_{\! \rm s}(Y_{\ell m} \hat r )^\top \hat r =  \nabla_{\! \rm s} Y_{\ell m}  \hat r^\top   \hat r+ (Y_{\ell m}  \nabla_{\! \rm s} \hat r ) ^\top \hat r =  \nabla_{\! \rm s}Y_{\ell m} + Y_{\ell m} \nabla_{\! \rm s} \hat r^\top  \hat r=  \nabla_{\! \rm s}Y_{\ell m}, 
\]
where we use the fact that $\nabla_{\! \rm s} \hat r$ is a symmetric matrix  and $\nabla_{\! \rm s} \hat r \hat r=0$. 
 Therefore, there holds
\bel{nabla-V}
\nabla_{\! \rm s} V_{\ell m} ^\top \hat r= -(\ell+2)  \nabla_{\! \rm s}  Y_{\ell m} =-{\ell+2\over 2\ell+1} \big(\ell V_{\ell m}+ (\ell+1)W_{\ell m}\big). 
\ee
The  computation   of $e\big(h(r)V_{\ell m}\big){\bf n}$ is thus completed in view of \eqref{turn} and \eqref{nabla-V}. 
A similar computation gives $h(r)W_{\ell m}$. Consider now $h(r)X_{\ell m}$.  Similar to \eqref{DV}, we have to compute the sum: 
\[
 e\big(h(r)X_{\ell m}\big){\bf n}|_{\mathbb S^2}= h_r(1)\big( X_{\ell m}+ \hat r X_{\ell m}^\top \big)\hat r+ h(1)(\nabla_{\! \rm s} X_{\ell m}+ \nabla_{\! \rm s} X_{\ell m}^\top) \hat r.
\] Notice that  $X_{\ell m}= \hat r \times \nabla_{\! \rm s} Y_{\ell m}$ is orthogonal to $\hat r$ and it follows immediately that $ X_{\ell m}^\top\hat r =0$. Further, by \eqref{surface}, there holds $\nabla_{\! \rm s} X_{\ell m}\hat r=0$. Then, it remains to consider $X_{\ell m}\hat r$ and $\nabla_{\! \rm s} X_{\ell m}^\top \hat r$. For the term   $\nabla_{\! \rm s} X_{\ell m} ^\top \hat r$, we use the relation \eqref{gradient-cross}  and have 
\[
\nabla_{\! \rm s} X_{\ell m} ^\top \hat r= \big(\nabla_{\! \rm s} (\hat r \times \nabla_{\! \rm s} Y_{\ell m} )\big)^\top\hat r=(\nabla_{\! \rm s}  \hat r  \times  \nabla_{\! \rm s} Y_{\ell m})^\top  \hat r+  \big(\hat r\times \nabla_{\! \rm s} (\nabla_{\! \rm s} Y_{\ell m} ) \big)^\top \hat r. 
\]
Both terms  can be computed by \eqref{cross-inner}: 
\[
 \big(   \hat r\times\nabla_{\! \rm s} (\nabla_{\! \rm s} Y_{\ell m} ) \big)^\top \hat r= \nabla_{\! \rm s} (\nabla_{\! \rm s} Y_{\ell m} )  ^\top  (\hat r\times \hat r)   = 0, 
\]
and 
\[
\aligned 
(\nabla_{\! \rm s}  \hat r  \times  \nabla_{\! \rm s} Y_{\ell m})^\top  \hat r = &   (\hat r\times \nabla_{\! \rm s}    \hat r)^\top \nabla_{\! \rm s} Y_{\ell m} = \big( \hat r\times (\mathop{\rm Id} - \hat r \hat r ^\top) \big) ^\top \nabla_{\! \rm s} Y_{\ell m}  \\
= & (  \hat r\times \mathop{\rm Id}  ) ^\top \nabla_{\! \rm s} Y_{\ell m}   =   \nabla_{\! \rm s} Y_{\ell m}\times \hat r.
\endaligned 
\]
This gives
\[
\nabla_{\! \rm s} X_{\ell m}^\top  \hat r= - X_{\ell m}. 
\]
Then we get the result for $h(r) X_{\ell m}$.  
\end{proof}
By \eqref{divVSH}, \eqref{T_N}, we get, for a displacement  $h(r)V_{\ell m}+ g(r)W_{\ell m}+ h(r)X_{\ell m}$, it holds that 
\bel{Tfgh}
\aligned 
 \mathcal T_{\bf n} ^-\big(f(r)&V_{\ell m}+ g(r)W_{\ell m}+ h(r)X_{\ell m}\big)= \mathcal T_{\bf n} ^+\big(f(r)V_{\ell m}+ g(r)W_{\ell m}+ h(r)X_{\ell m}\big)
\\=& \Big({ \mu \over 2\ell+1}\big(( 3\ell+2) f_r (1)- \ell(\ell+2) f(1)-\ell g_r (1)+\ell (\ell -1) g(1)\big)
\\& + {\lambda\over 2\ell+1 }\big((\ell+1 )f_r(1)+ (\ell+1)(\ell+2)f(1)- \ell g_r(1)+ \ell(\ell-1)g(1)\big)\Big) V_{\ell m}
\\& + \Big({ \mu \over 2\ell+1}\big(-(\ell +1) f_r (1)- (\ell+1) (\ell+2) f(1)+(3\ell+1)g_r (1)+(\ell+1)(\ell-1) g(1)\big)
\\& + {\lambda\over 2\ell+1 }\big(-(\ell+1 )f_r(1)-(\ell+1) (\ell+2)f(1)+\ell g_r(1)-\ell (\ell -1)g(1)\Big) W_{\ell m}\\& +\mu  \big(h_r(1)-  h(1)\big)X_{\ell m}. 
\endaligned 
 \ee
The flowing lemma concerns the double layer boundary operator and its adjoint. 
In particular, on a sphere, we have the following lemma. 
\begin{lemma}
\label{stars}
Let $ \DLO $ be the double layer boundary operators defined by \eqref{double-boundary} on a sphere and $\DLO^* $ its adjoint operators.  Then  for $v\in L^2(\mathbb S^2)^3$, we have $\DLO  v =\DLO^*  v $.
\end{lemma}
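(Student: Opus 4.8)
The plan is to reduce the identity $\DLO v = \DLO^* v$ to a pointwise identity between the matrix-valued kernels of $\DLO$ and $\DLO^*$ on $\mathbb S^2$. Both operators are given by principal-value integrals against a Calder\'on--Zygmund type kernel on the smooth surface $\mathbb S^2$, hence both extend to bounded operators on $L^2(\mathbb S^2)^3$; consequently it suffices to show that the kernels occurring in \eqref{double-boundary} and \eqref{double-boundary*} coincide off the diagonal, i.e. that
\[
\mathcal T_{{\bf n},y}(G)(x-y) = \big(\mathcal T_{{\bf n},x}(G)(x-y)\big)^\top \qquad\text{for } x\neq y,\ x,y\in\mathbb S^2 .
\]

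The first step is to make the traction kernel of the Kelvin solution \eqref{Green} explicit. Writing $z$ for the argument, $\rho=|z|$, and denoting by $\mathcal T_{\bf m}G(z)$ the matrix obtained by applying $\mathcal T$ to $G$ in its own variable and contracting with a unit vector ${\bf m}$, a direct differentiation of $G_{ij}(z)=C_1\delta_{ij}/\rho+C_2\,z_iz_j/\rho^3$ (with $C_1,C_2$ read off from \eqref{Green}) yields
\[
\mathcal T_{\bf m}G(z)=\frac{\alpha}{\rho^3}\Big({\bf m}\,z^\top-z\,{\bf m}^\top-(z\cdot{\bf m})\,\mathrm{Id}\Big)-\frac{\beta}{\rho^5}\,(z\cdot{\bf m})\,z\,z^\top,\qquad \alpha=\tfrac{\mu}{4\pi(\lambda+2\mu)},\ \ \beta=\tfrac{3(\lambda+\mu)}{4\pi(\lambda+2\mu)}.
\]
Since $\nabla_y\big(G(x-y)\big)=-(\nabla G)(x-y)$ while $\nabla_x\big(G(x-y)\big)=(\nabla G)(x-y)$, the two kernels in question are $\mathcal T_{{\bf n},y}(G)(x-y)=-\,\mathcal T_{{\bf n}(y)}G(z)$ and $\big(\mathcal T_{{\bf n},x}(G)(x-y)\big)^\top=\big(\mathcal T_{{\bf n}(x)}G(z)\big)^\top$, so the claimed identity is equivalent to $-\,\mathcal T_{{\bf n}(y)}G(z)=\big(\mathcal T_{{\bf n}(x)}G(z)\big)^\top$ with $z=x-y$.

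It then remains to feed in the two elementary facts that hold on the unit sphere: for $x,y\in\mathbb S^2$ one has ${\bf n}(x)=x$, ${\bf n}(y)=y$, whence
\[
{\bf n}(x)-{\bf n}(y)=x-y=z,\qquad {\bf n}(x)\cdot z=\tfrac12\rho^2=-\,{\bf n}(y)\cdot z .
\]
In $\mathcal T_{\bf m}G(z)$ the two symmetric contributions — the $(z\cdot{\bf m})\,\mathrm{Id}$ term and the $(z\cdot{\bf m})\,zz^\top$ term — are invariant under transposition and carry the scalar factor $z\cdot{\bf m}$, which changes sign when ${\bf m}$ passes from ${\bf n}(x)$ to ${\bf n}(y)$; this sign exactly compensates the overall minus sign on the left, so these contributions agree. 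The remaining contribution $\tfrac{\alpha}{\rho^3}({\bf m}z^\top-z{\bf m}^\top)$ is antisymmetric, so transposition already flips its sign, and equality of the two sides reduces to ${\bf n}(x)z^\top-z{\bf n}(x)^\top={\bf n}(y)z^\top-z{\bf n}(y)^\top$, i.e. $\big({\bf n}(x)-{\bf n}(y)\big)z^\top=z\big({\bf n}(x)-{\bf n}(y)\big)^\top$, which holds because ${\bf n}(x)-{\bf n}(y)=z$. Hence the kernels of $\DLO$ and $\DLO^*$ coincide on $\mathbb S^2$, and $\DLO v=\DLO^* v$ for all $v\in L^2(\mathbb S^2)^3$; the same computation, with $z/r$ and $\rho^2/(2r)$ in place of $z$ and $\rho^2/2$, gives the statement on an arbitrary sphere $\Gamma_r(x_0)$. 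The only genuine work is the explicit evaluation of $\mathcal T_{\bf m}G$ and the careful tracking of the three independent sources of signs — differentiation in $x$ versus $y$, the transpose in \eqref{double-boundary*}, and ${\bf n}(x)$ versus ${\bf n}(y)$ — which is where I expect the main (though essentially routine) difficulty to lie.
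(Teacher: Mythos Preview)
Your proof is correct and follows essentially the same route as the paper: both compute the Kelvin traction kernel explicitly, split it into a symmetric part proportional to $(z\cdot{\bf m})$ and an antisymmetric part ${\bf m}z^\top-z{\bf m}^\top$, and then close using the two sphere identities ${\bf n}(x)\cdot z=-{\bf n}(y)\cdot z$ and ${\bf n}(x)-{\bf n}(y)=z$. The only difference is presentational --- you organize the argument in matrix form with a clean symmetric/antisymmetric split, while the paper writes out the components and the explicit term $x_jy_i-x_iy_j$ --- but the content is the same.
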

\begin{proof}
 Indeed, we have 
\[
\aligned 
\del_ {x_k}G_ {ji}(x-y)= & {1 \over 8 \pi\mu |x-y|^3 }\bigg  (-{\lambda+3\mu \over \lambda+2\mu}  \delta _ {ij} (x_ k -y_k)\\ & + {\lambda+\mu \over \lambda+2\mu} \Big ((x_ i-y_i) \delta_ {jk}+ (x_j-y_j) \delta_ {ik}-{ 3 (x_i-y_i)(x_j-y_j)(x_k-y_k)  \over |x-y|^2} \Big) \bigg), 
\endaligned 
\]
and 
\[
  \mathop{\rm Tr}  e_x(G_ j)\mathop{\rm Id}= {\mathop{\rm div}} _x G_ j (x-y )=  - {(x_j -y_ j) \over 4  \pi  |x-y|^3 }  { 1   \over \lambda+2\mu}. \]
Hence, we have 
\begin{align*}
\Big(2\mu e_x( G_j) + &\lambda  \mathop{\rm Tr}  e_x(G_ j)\mathop{\rm Id} \Big)_{ik} {x_k \over |x|} 
\\ 
= &  -{1 \over 4\pi (\lambda +2 \mu)|x-y|^3 }\Big ({\mu \over \lambda+3\mu} ( \delta _ {ij} (x_ k -y_k)+\delta _ {jk} (x_ i -y_i)+\delta _ {ik} (x_ j -y_j))
\\
& +{ 3 (x_i-y_i)(x_j-y_j)(x_k-y_k)  \over |x-y|^2} \Big) \bigg){x_k \over |x|}
\end{align*}
The same result holds for  $ \Big(2\mu e_y(G_i)+ \lambda \mathop{\rm Tr}e_y(  G_ i)\mathop{\rm Id}\Big)_{jk} {y_k \over |y|} $ by replacing $x$ by $y$. 
Further, for $x, y\in \mathbb S^2$, the following relation holds: 
\[
(x-y)\cdot {x \over |x|}= 1- {x\cdot y \over r} = (y-x)\cdot {y \over |y|}. 
\]
Therefore, we have 
\[
\aligned
(\DLO^* v)_i (x) =& \sum\limits_{j} \int_\Gamma {1 \over 4\pi (\lambda +3 \mu)|x-y|^3}\Bigg( \Big(-{\mu \over \lambda+3\mu}  \delta _ {ij}-{ 3 (x_i-y_i)(x_j-y_j)  \over |x-y|^2} \Big)(x-y)\cdot {x \over |x|} v_ j (y)\\
& + {\mu \over \lambda+3\mu} (x_j y_ i -x_i y_j) v_j (y)\Bigg)dy \\
=&  \sum\limits_{j} \int_\Gamma {1 \over 4\pi (\lambda +2  \mu)|x-y|^3}\Bigg (\Big(-{\mu \over \lambda+3\mu}  \delta _ {ij}-{ 3 (y_i-x_i)(y_j-x_j)  \over |x-y|^2} \Big) (y-x)\cdot {y \over |y|} v _ j (y)\\
& + {\mu \over \lambda+3\mu} (x_j y_ i -x_i y_j) v_j (y)\Bigg)dy= (\DLO v)_i (x).
\endaligned
\]
\end{proof}

 \subsection{Proof of  the principal results}
\label{sec:spectral3}
We are now ready to prove  Theorem~\ref{H&N}. 
\begin{proof}[Proof of  Theorem~\ref{H&N}] 
Consider the single layer potential $\SL $  defined by \eqref{single-layer}. Note that for any $\phi\in \Hnh(\mathbb S^2)^3 $, as announced in Lemma~\ref{lemma-single}, $u= \SL \phi$ satisfies the following linear isotropic elasticity system 
\bel{Deltav=0}
{\bf L}u = -\mathop{\rm div}\Big(2 \mu e(u)  +\lambda  \mathop{\rm Tr} e(u)\mathop{\rm Id}\Big) =0, \quad  \text{in $\mathbb R^3 \textbackslash \mathbb S^2$ }. 
\ee  
Now we determine $u=\SL V_{\ell m}$ by means of separation of variables in spherical coordinates. That is, we  propose the Ansatz  displacement field $ \SL V_{\ell m}$ as a function of the spherical coordinates of  form $ u=\SL V_{\ell m} = h(r)V_{\ell m}+g(r)W_{\ell m}+h(r)X_{\ell m}$ where $f,g,h$ are three scalar functions of $r$ to be determined.  Using  the relation 
\[
\mathop{\rm div}( \mathop{\rm Tr} e(u)\mathop{\rm Id})  = \nabla (\mathop{\rm div} u), 
\] and plugging the Ansatz into \eqref{Deltav=0}, together with  \eqref{divVSH}-\eqref{VtoS}, we have the following equation:
\bel{Ode}
\begin{array}{l}
\Bigg[\Big(\mu+ {\ell+1\over 2\ell+1}(\mu+\lambda)\Big)\Big(f_{rr}+ {2\over r} f_r - {(\ell+1)(\ell +2)\over r^2}f\Big)\\\hspace{4cm}- {\ell \over 2\ell+1}(\mu+\lambda) \Big(g_{rr}- {2\ell -1\over r}g_r+ {(\ell-1)(\ell+1)\over r^2} g\Big)\Bigg] V_{\ell m}\\
+\Bigg[\Big(\mu+ {\ell \over 2\ell+1}\Big(\mu+\lambda)\Big)\Big(g_{rr}+ {2\over r} g_r - {(\ell-1)\ell \over r^2}g\Big)- {\ell +1\over 2\ell+1}(\mu+\lambda) \Big(f_{rr}+{2\ell +3\over r}f_r+ {\ell(\ell+2)\over r^2} f\Big)\Bigg] W_{\ell m}\\
+ \mu  \Big[h_{rr}+{ 2\over r} h - {\ell(\ell+1)\over r^2} h  \Big] X_{\ell m}=0.
\end{array}
\ee
Since $V_{\ell m}, W_{\ell m}, X_{\ell m}$ is an orthogonal basis, all the coefficients of $V_{\ell m}, W_{\ell m}, X_{\ell m}$ must be zero in \eqref{Ode}. Let first  $\ell\geq 1$ and we have six sets of analytical solutions to \eqref{Ode} reading 
\begin{center}
\begin{tabular}{c |c c c |c c c | } 
& (i) & (ii) & (iii) &(iv) &(v)& (vi)\\ 
 \hline
 $f$  & $r^{-\ell-2} $& $- {\ell \over 2\ell+1}(\mu+\lambda)r^{-\ell}$ & $0$ &${2 \over 2l+3} \Big(\mu+{ \ell \over 2\ell+1}(\mu+\lambda)\Big)r^{\ell+1}$ &$0$ &$0$ \\
 $g$ &$0$ & ${2 \over 2\ell -1}\Big(\mu+{\ell +1\over 2\ell+1}(\mu+\lambda)\Big)r^{-\ell}$ &$0$&$ {\ell+1 \over 2\ell+1} (\mu+\lambda)r^{\ell+1}$&$r^{\ell-1}$&$0$ \\
 $h$ &$0$& $0$& $r^{-\ell-1}$ &$0$&$0$&$r^\ell$
\end{tabular}
\end{center}
in which $(i),(ii),(iii)$ are admissible only  for $|r|>0$ while $(iv),(v),(vi)$ are unbounded when $|r|\to \infty$.  Now, we consider the exterior and the interior of the unit sphere separately  in  which we aim to get $\SL V_{\ell m}$. 
For the sake of simplicity, write 
\[
\begin{array}{l}
p_{12}= -{\ell\over 2\ell+1}(\mu+\lambda), \quad p_{22}= {2 \over 2\ell-1}\Big(\mu+{\ell+1\over 2\ell+1} (\mu+\lambda)\Big), \\ q_{11}={2 \over 2l+3} \Big(\mu+{ \ell\over 2\ell+1}(\mu+\lambda)\Big)r^{\ell+1}, \quad q_{21}= {\ell+1 \over 2\ell+1} (\mu+\lambda). 
\end{array}
\]
Write  $\SL  V_{\ell m} $ as a linear combination of the three solutions in the exterior and three in the interior of the unit sphere:  
\bel{v-in-out}
 \SL  V_{\ell m} = \begin{cases}
a_{\ell m}^{in}q_{11}r^{\ell+1} V_{\ell m}+ \Big(a_{\ell m}^{in}q_{21}r^{\ell+1}+b^{in}_{\ell m} r^{\ell-1}\Big) W_{\ell m}+c^{in}_{\ell m} r^\ell X_{\ell m} & |r|<1,\\
\Big(a^{out}_{\ell m} r^{-\ell-2} + b^{out}_{\ell m}p_{12}r^{-\ell} \Big)V_{\ell m}+ b^{out}_{\ell m} p_{22} r^{-\ell} W_{\ell m}+c^{out}_{\ell m} r^{-\ell-1} X_{\ell m}& |r|>1, 
\end{cases}
\ee
where $a^{in/out}_{\ell m},b^{in/out}_{\ell m},c^{in/out}_{\ell m}\in \mathbb R$ are constants to be determined.  In order to determine these six unknown constants, we use the jump relation given by Theorems~\ref{Th:jump-relations}: 
\bel{two}
\llbracket \SL  V_{\ell m} \rrbracket= 0, \quad \llbracket \mathcal T  \SL  V_{\ell m}\rrbracket= V_{\ell m}. 
\ee
Since $V_{\ell m}, W_{\ell m},X_{\ell m}$ is an orthogonal basis, it follows immediately from the first equality in \eqref{two} that 
\[ 
\aligned 
a_{\ell m}^{in}q_{11}=  a^{out}_{\ell m}+ b^{out}_{\ell m} p_{12}
 \quad  a_{\ell m}^{in}q_{21}+b^{in}_{\ell m} = b^{out}_{\ell m} p_{22},
\quad c^{in}_{\ell m}  = c^{out}_{\ell m}. 
\endaligned
\] 
Now take the inner product of the second equation \eqref{two} with $V_{\ell m}, W_{\ell m},X_{\ell m}$. With the computations in \eqref{Tfgh}, we have 
\begin{align*}
2\ell -1=& \mu  a_{\ell m}^{in}q_{11} (3\ell+2)(\ell+1)+ \lambda _{\ell m}^{in}q_{11} (\ell+1)^2 - (\mu+\lambda)\Big(a_{\ell m}^{in} q_{21}  \ell(\ell+1)  + b_{\ell m}^{in} \ell(\ell-1)\Big) \\
&- \mu \Big(a^{out}_{\ell m} (3\ell+2)(-\ell-2)- b^{out}_{\ell m}p_{12}(3\ell+2) \ell\Big) \\&- \lambda \Big((\ell+1)(\ell-2)a^{out}_{\ell m}-b^{out}_{\ell m}p_{12} (\ell+1) l \Big)-(\mu+\lambda)b^{out}_{\ell m}p_{22}\ell^2, 
 \\ 
 0=& \mu \Big(  a_{\ell m}^{in} q_{21}  (3\ell+1) (\ell+1)+  b_{\ell m}^{in} (3\ell+1) (\ell-1)\Big) 
+ \lambda \Big(a_{\ell m}^{in} q_{21} \ell(\ell+1)+ b^{in}_{\ell m} \ell(\ell-1)\Big)\\& -(\mu+\lambda) a_{\ell m}^{in}q_{11} (\ell+1)^2+\mu b^{out}_{\ell m} p_{22} \ell(3\ell+1)+\lambda b^{out}_{\ell m} p_{22} \ell^2 \\&-( \mu+\lambda) \Big(a^{out}_{\ell m} (\ell+1)(\ell+2)+b^{out}_{\ell m}p_{12}(\ell+\ell) \ell\Big),
\\
 0=&  \mu (\ell c^{in}_{\ell m}  +(\ell +1)c^{out}_{\ell m}).
\end{align*}
Hence, we conclude 
\[
\aligned 
 &a_{\ell m}^{in} = {1\over 2 \mu (2\mu+ \lambda )},  \qquad a_{\ell m}^{out}={1 \over 2\ell+3}  \Big(\mu+{ \ell \over 2\ell+1}(\mu+\lambda)\Big) \big( \mu(2\mu + \lambda)\big)^{-1}, \\ &b_{\ell m}^{in} = - {\ell+1 \over 2\ell+1} (\mu+\lambda)  \big(2 \mu(2\mu + \lambda)\big)^{-1},\qquad  
b^{out}_{\ell m}=c^{in}_{\ell m} = c^{out}_{\ell m}=0. 
\endaligned 
\]
Similar computations following the same logic give the results for $W_{\ell m}, X_{\ell m}$ for $\ell \geq 1$. 

In the case where $\ell=0$, we only have to treat $V_{00}$ since $W_{00}=X_{00}=0$.  Using \eqref{v-in-out}--\eqref{two}, we have 
 \bel{l0single}
\SL  V_{00}= \begin{cases} { 1\over 3(2\mu+ \lambda)} r^2  V_{00} & r\leq 1,\\
 { 1\over 3(2\mu+ \lambda)} r^{-1}  V_{00}& r>1. 
\end{cases}
\ee
Notice that the result  \eqref{l0single} is  indeed consistent with the cases where $\ell \geq 1$. 
We have proved therefore the theorem.
\end{proof}
Corollary~\ref{corollaryH&N} can now be deduced. 
\begin{proof}[Proof of Corollary~\ref{corollaryH&N}]
 As is shown in \eqref{boundary-neuman}, we have
\[
 2\DLO  ^*V_{\ell m} =  \mathcal T_{\bf n} ^+ \SL V_{\ell m} +  \mathcal T_{\bf n} ^- \SL  V_{\ell m}. 
\] 
Again, apply the computation in \eqref{Tfgh},  we have 
\[
  \mathcal T^-_{\bf n} \SL V_{\ell m}+ \mathcal T_{\bf n}^+ \SL V_{\ell m}= {-2(2\ell^2+6\ell+1)\mu + 3 \lambda\over (2\ell+1)(2\ell +3) (2\mu+\lambda)}V_{\ell m}. 
\]
Hence, we have \[
\DLO^*V_{\ell m}={2(2\ell^2+6\ell+1)\mu + 3 \lambda\over 2 (2\ell+1)(2\ell+3) (2\mu+\lambda)}. 
\] 
Similar computations give the results for the other two components $W_{\ell m}, X_{\ell m}$. Further, Lemma~\ref{stars} provides the result for the double layer operator $\DLO$.  
\end{proof}
Finally, the proof of Theorem~\ref{H&N'} follows the same structure to that of Theorem~\ref{H&N} in employing the jump relations 
\[
\llbracket \DL \phi \rrbracket = -\phi, \quad \llbracket \mathcal T \DL \phi \rrbracket =0
\]
for the choices $\phi =V_{\ell m}, W_{\ell m}, X_{\ell m} $ respectively. 

\section{Application}
\label{sec:case-study}
We study here a case of an elasticity problem involving several spherical inclusions as an application of the results in the above sections, derive an integral equation formulation and propose a Galerkin formulation thereof based on the vectorial spherical harmonics.

\subsection{Problem setting}
Set  the sets of indices $J_1,J_2,J$ such that $M+1\in J_2$, $ J_1 \cap J_2=\emptyset$ and
 \bel{J1&J2}
 J_1\cup J_2 =\{1,2,\dots, M,M+1\}, \qquad J=  J_1\cup J_2 \textbackslash \{M+1\}, 
 \ee
and let $\Omega_i\subset \mathbb R^3$, $i \in J$ be non-overlapping balls, centered at $x_i
\in \mathbb R^3$ with radius $r_i$, all contained in an additional ball $B_R$  centred at the origin with the radius $R$.   
 Moreover,  define the domains
 \bel{Omegas}
 \Omega_{M+1} = \mathbb R^3\textbackslash B_R, \quad \Omega_0 := B_R \textbackslash  \bigcup\limits_{i\in J_1\cup J_2} \overline \Omega_i, \quad \Omega:= B_R \textbackslash  \bigcup\limits_{i\in J_2} \overline \Omega_i.
\ee 
Denote the boundaries $\Gamma_i= \del \Omega_i$, $ i\in \{0\} \cup J_1\cup J_2$. 
Then, it holds that $\Gamma_0 =\bigcup\limits_{i\in J_1\cup J_2} \Gamma_i$. 
Set further ${\bf n}_0$ as the outward pointing unit normal vector with respect to the domain $\Omega_0$ and ${\bf n}_i$ the outward pointing normal vector with respect to each domain  $\Omega_i$, $i\in J_1\cup J_2$. Then it holds that ${\bf n}_i=- {\bf n}_0$. 
 We refer to Figure~\ref{Geometry configuration} for an illustration of geometry configuration. 

\begin{figure}
\centering
\includegraphics[height = 2.5in]{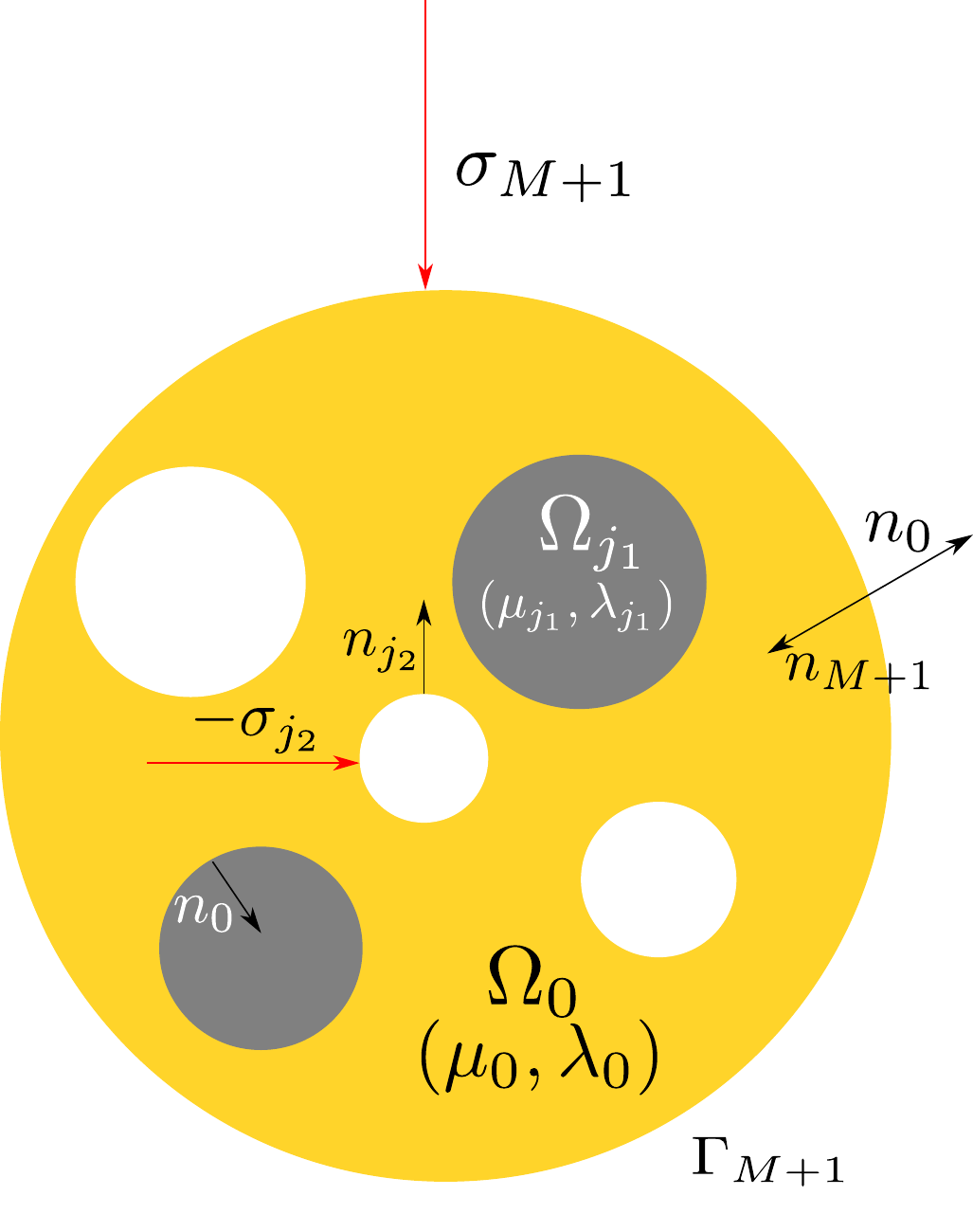} 
\caption{Geometry setting of the model where  $j_1\in J_1$ and $j_2\in J_2$. }\label{Geometry configuration}
\end{figure}

In the numerical example presented below, we assume that each inclusion $\Omega_i$, $i\in  J_1$ is filled with an isotropic elastic medium associated with Lam\'e parameters $\mu_i, \lambda_i$.  The remaining background domain $\Omega_0$ is filled with medium of  Lam\'e constants $\mu_0, \lambda_0$. Further, we denote  $
{\mathcal T^i_{{\bf n}_j}}^{\pm}$  the normal derivative operator acting on the boundary $\Gamma_j$ with Lam\'e constants $\mu_j, \lambda_j$ and the normal vector ${\bf n}_j $:
\[
{\mathcal T^i_{{\bf n}_j}}^{\pm} u = \gamma_j^\pm\Big( (2\mu_i e(u)+ \lambda_i \mathop {\rm Tr}e(u) \mathop {\rm Id}){\bf n}_j\Big)
\] 
where $\gamma^\pm_j$ are the exterior and interior the trace operators on $\Gamma_j$,  following the notations given by \eqref{trace} by taking $\Omega^- = \Omega_j$.  
Define the parameter 
 $s_i$ by 
\bel{si}
s_i =\begin{cases}
-1& i=M+1, \\
1& \mbox{else}. 
\end{cases}
\ee
 In particular, write 
\bel{s-jump}
\llbracket \mathcal T u \rrbracket =  {\mathcal T^0_{s_i{\bf n}_i}}^- u -  {\mathcal T^0_{s_i {\bf n}_i}}^+u, \qquad \mbox{on}~\Gamma_ i, ~i\in J_1\cup J_2
\ee
and it is obvious that 
\[
 {\mathcal T^0_{s_i{\bf n}_i}}^\pm = s_i {\mathcal T^0_{{\bf n}_i}}^\pm. 
\]
For given $f_i\in \Hnh(\Gamma_i)^3$, we impose the transmission condition:
\[
\llbracket u \rrbracket= 0, \qquad\llbracket \mathcal T u \rrbracket = f_i, \qquad \mbox{on}~\Gamma_i, i\in J_1. 
\]
Further, we let the domain $\Omega$ be subjected  of a given stress tensor on its boundary. Indeed, we let each part of the boundary   $\Gamma_i$, $i\in J_2$ be  subjected to a  given stress tensor $\sigma_i \in H ^{1/2}(\Gamma_i)^3$:
\bel{stress-relation}
	 {\mathcal T^0_{{\bf n}_i}}^+ u=-s_i\sigma_i. 
\ee

Then, we consider the solution $u\in \Vn{\Omega}$ to the following interface problem: 
\bel{casestudy}
\begin{array}{rlll}
-\mathop {\rm div}\big(2\mu_i e(u)+ \lambda_i \mathop {\rm Tr }e(u) \mathop {\rm Id}\big) &\hspace{-7pt}= 0,  &\mbox{in } \Omega_i,~i\in \{0\}\cup J_1, \\
\llbracket u \rrbracket &\hspace{-7pt} =0, &  \mbox{on }\Gamma_i~i\in J_1, \\
\llbracket \mathcal T u \rrbracket &\hspace{-7pt} =f_i, &  \mbox{on }\Gamma_i~i\in J_1, \\
 {\mathcal T^0_{{\bf n}_i}}^+ u  &\hspace{-7pt}=-s_i\sigma_i,  &\mbox {on } \Gamma_i, i\in J_2.
\end{array}
\ee
Standard arguments involving the Lax-Milgram theorem yields the well-posedness of the problem. 
\begin{remark}
In our case, we pre-defined the index set $J_2$ with the condition  $M+1\in J_2$. However, by equipping the domain $\Omega_{M+1}$ (which is indeed the exterior domain of the sphere $B_R$) with Lam\'e parameters $\mu_{M+1}, \lambda_{M+1}$,  one can also relax the setting and consider the case where $M+1\in J_1$. 
\end{remark}

\subsection{Integral equation}
Let $u\in \Vn{\Omega}$ denote the solution to \eqref{casestudy} satisfying $\llbracket u \rrbracket=0$ on $\Gamma_i$ for $i\in J_1$ and define $\nu$ on $\Gamma_0$ by
\[
	\nu= \gamma_0^- u
	\qquad\mbox{as well as}\qquad
	\quad \nu_i= \nu|_{\Gamma_i}, \quad \forall i\in J_1\cup J_2 ,
\]  
where $\gamma_0^-$ is the trace operator $\gamma_0^- \colon H^1(\Omega_0)^3 \rightarrow \Hh(\Gamma_0)^3$  defined by \eqref{trace} for $\Omega^-=\Omega_0$.  
To deduce an integral equation for $\nu$, we first introduce an auxiliary problem: find a solution 
$v\in H^1(\Omega )^3$
to \bel{auxiliary}
\begin{array}{rll}
-\mathop {\rm div}\big(2\mu_0 e(v)+ \lambda_0  \mathop {\rm Tr }e(v) \mathop {\rm Id}\big) &\hspace{-7pt} = 0, ~&\mbox{in } \Omega \textbackslash \Gamma_0, 
\\
\gamma_0^-  v&\hspace{-7pt}= \gamma_0^-  u, ~&\mbox {on }\Gamma_0. 
\end{array}
\ee
The auxiliary  problem \eqref{auxiliary} admits a unique solution  $v$ in $\Omega$ and observe that
\[
v=u, \quad~\mbox{in}~\Omega_0,
\] 
but is different in all $\Omega_i$, $i\in J_1$.
Further,  there exists a global density $\phi$  supported on $\Gamma_0$ such that 
\bel{globle-present}
\begin{array}{ll}
v=\SL_ G^0 \phi=  \sum\limits_{i\in J_1\cup J_2} \SL^0_i \phi_i \quad &\text{in}~\Omega_0 \\
v=\SLO_ G^0 \phi=   \sum\limits_{i\in J_1\cup J_2} \SLO^0_i \phi_i \quad & \mbox{on}~ \Gamma_0, 
\end{array}
\ee
where $\SL_G^0$ is the global layer potential \eqref{single-layer} with Lam\'e parameters $\mu_0,\lambda_0$ defined on the whole boundary $\Gamma_0$ while $\SL_i^0$ is the local single layer potential with Lam\'e parameters $\mu_0,\lambda_0$ defined locally on the sphere $\Gamma_i$ and $\SLO_ G^0,  \SLO^0_i$ their corresponding single layer boundary operators \eqref{single-layer-boundary}. Further, according to Theorem~\ref{Th:jump-relations}, the density $\phi_i$ is given by  the jump relation 
\bel{phiSm}
\phi_i=\llbracket  \mathcal T  v  \rrbracket =\Big( {\mathcal T_{s_i{\bf n}_i}^0}^- v -{\mathcal T_{s_i{\bf n}_i}^0 }^+ v\Big)= s_i\Big(  {\mathcal T_{{\bf n}_i}^0 }^- u  -{\mathcal T_{{\bf n}_i}^0}^+ v\Big), \quad~\mbox{on}~\Gamma_i,  i\in J_1\cup J_2.  \ee
The last equivalence in \eqref{phiSm} is obtained because  $u=v$ on $\Omega_0$. 
Further, both solutions $u, v$ can be represented by some local densities in each domain $\Omega_i$: 
\[
\begin{array}{ll}
u|_{\Omega_i}=\SL_i \varphi_i, \quad & i\in J_1 \\
v|_{\Omega_i}=\SL_i^0 \psi_i\quad & i\in J_1\cup J_2,
\end{array}\]
where $\SL_i$ is the local layer potential with Lam\'e parameters $\mu_i,\lambda_i$ while $\SL_i^0$ with $\mu_0,\lambda_0$, both of which are defined locally on the sphere $\Gamma_i$. For the corresponding  single layer boundary operators $\SLO_i^0, \SLO_i$ defined by \eqref{single-layer-boundary}, we have 
\[
\SLO_i^0 \psi_i= \SLO_i \varphi_i = \nu_i, \quad i\in J_1. 
\] 
According to Corollary~\ref{Corol:invertible}, the single layer boundary operators $\SLO_i^0, \SLO_i$ are invertible, so we have  
\bel{inverse-values}
 \begin{array}{ll}
\psi_i= {\SLO_i^0}^{-1} (\gamma_i v)=    {\SLO_i^0}^{-1} \nu_i,  \quad & \mbox{on }~\Gamma_i, i\in J_1\cup J_2, 
\\ 
\varphi_i= \SLO_i^{-1}(\gamma_i u ) = \SLO_i^{-1}\nu_i,\quad & \mbox{on }~\Gamma_i, i\in J_1.
 \end{array}
\ee 
Now, according to Theorem~\ref{double-in/out}, we have 
 \bel{double-local}
 \begin{array}{ll}
{\mathcal T_{{\bf n}_i}^0}^- \SL_i^0  \psi_i =   s_i{1\over 2 } \psi_i  +  {\DLO_i^0}^*\psi_i,  \quad  &\mbox{on }~\Gamma_i, i\in J_1 \cup J_2, \\
{\mathcal T_{{\bf n}_i}^i}^-  \SL_i \varphi_i = s_i  {1\over 2 } \varphi_i+   \mathcal \DLO^*_i\varphi_i, \quad & \mbox{on }~\Gamma_i, i\in J_1, \\
 \end{array}
 \ee
 where $ {\DLO_i^0}^*$, $ \DLO_i^*$ are adjoint double layer boundary operators \eqref{double-boundary*} defined locally on $\Gamma_i$ with Lam\'e constants $\mu_0, \lambda_0$ and $\mu_i, \lambda_i$ respectively.  In problem \eqref{casestudy}, we have 
 \[
 {\mathcal T_{{\bf n}_i}^0 }^+   u=  \begin{cases}
 {\mathcal T_{{\bf n}_i}^i}^-  \SL_i  \varphi_i-f_i, & \mbox{on }~\Gamma_i, i\in J_1, \\
 - s_i\sigma_i, & \mbox{on }~\Gamma_i, i \in J_2, 
 \end{cases}
  \]
  where $f_i\in \Hnh(\Gamma_i)^3,\sigma_i\in \Hh(\Gamma_i)^3$ are given transmission conditions and Neuman boundary conditions respectively. Providing \eqref{phiSm}--\eqref{double-local}, we have 
  \[
  \phi_i=\begin{cases}
  {1\over 2} ({\SLO_i^0}^{-1}- \SLO_i^{-1}) \nu_i +  ({\DLO_i^0}^*{\SLO_i^0}^{-1}-{\DLO_i}^* \SLO_i^{-1}) \nu_i+f_i, &i\in J_1, \\
{1\over 2}  {\SLO_i^0}^{-1} \nu_i +s_i {\DLO_i^0}^*{\SLO_i^0}^{-1}\nu_i+s_i\sigma_i,&i \in J_2.
  \end{cases}
  \]
 According to \eqref{globle-present}, we have 
 \bel{integral-equation}
 (\mathop{\rm Id} - \SLO_G^0\mathcal L)\nu =  \SLO_G^0\Sigma
 \ee
 where $\mathcal L$ is an operator defined on each $\Gamma_i$: 
 \begin{align*}
\mathcal L|_{\Gamma_i} &= \mathcal L_i  = {1\over 2} ({\SLO_i^0}^{-1}- \SLO_i^{-1}) +({\DLO_i^0}^*{\SLO_i^0}^{-1}-{\DLO_i}^* \SLO_i^{-1}), & i\in J_1,
\\
\mathcal L|_{\Gamma_i} &=\mathcal L_i  = {1\over 2}  {\SLO_i^0}^{-1} \nu_i +s_i {\DLO_i^0}^*{\SLO_i^0}^{-1}\nu_i ,  & i\in J_2, 
 \end{align*}
  and the  vector $\Sigma$ satisfies 
 \[
 \Sigma|_{ \Gamma_ i}  =\begin{cases}
f_i,& i\in J_1,\\
 \sigma_i,& i \in J_2. 
 \end{cases}
 \]

\subsection{Galerkin approximation}
Introduce $V_{N,i}, i\in J_1\cup J_2$   the set spanned by vector spherical harmonics \eqref{vector-harmonics} on the sphere $\Gamma_i$ with a maximum degree $N$: 
\bel{vl}
V_{N,i}=\Big\{\sum\limits_{\ell=0}^N \sum\limits_{m=-\ell}^\ell  \sum\limits_{k=1}^3 [y_i]_{\ell m}^k Y_{\ell m}^{ki}(x)  \Big| [y_i]_{\ell m}^k\in \mathbb R\Big\},  
\ee
where we write $Y_{\ell m}^1=V_{\ell m}, Y_{\ell m}^2=W_{\ell m}, Y_{\ell m}^3=X_{\ell m} $  and 
$
Y_{\ell m}^{ki}(x)= Y_{\ell m}^k \left({x-x_i\over r_i}\right). 
$
Define also the global set
\bel{vl-global}
V_{N}=\bigotimes\limits_{i\in J_1\cup J_2} V_{N,i}.
\ee
We look for the approximation of  $\nu_N\in V_N$ to \eqref{casestudy} with 
\bel{Galerkin}
\forall i\in J_1\cup J_2,  \forall v_{N,i}\in V_{N,i}: \quad  \langle \nu_{N,i}-\mathcal V^0_G\mathcal L \nu_{N},v_{N,i}\rangle_{\Gamma_i} =  \langle \mathcal V^0_G \Sigma, v_{N,i}\rangle_{\Gamma_i}.
\ee
In practice, we use the quadrature \eqref{quadrature} to approximate the inner product and the approximate solution $\nu_N$  on each $\Gamma_i$ thus satisfies 
\[
\forall i\in J_1\cup J_2,  \forall v_{N,i}\in V_{N,i}: \quad  \langle \nu_{N,i}-\mathcal V^0_G\mathcal L \nu_{N},v_{N,i}\rangle_{\Gamma_i,t} =  \langle \mathcal V^0_G\Sigma, v_{N,i} \rangle_{\Gamma_i,t}. 
\]
Denote by $\mathcal M= 3(N+1)^2 (|J_1|+|J_2|)$ the number of degrees of freedom. The $\mathbb R^{\mathcal M}$-vector  collecting all the coefficients $[y]_{\ell m}^k$  denoted by ${\bf \Lambda}$ yields the linear system
\bel{linear}
  ({\bf D}- {\bf N}){\bf \Lambda} ={\bf F},
\ee
where by \eqref{orthogonal}, \eqref{inner-product}, the $\mathcal M \times \mathcal M$ diagonal matrix ${\bf D}$ is given by
\[
[D_{ii}]_{\ell m,\ell m}^{11} =(2\ell+1)(\ell+1), 
\qquad
[D_{ii}]_{\ell m,\ell m}^{22} =(2\ell+1)\ell, 
\qquad
[D_{ii}]_{\ell m,\ell m}^{33} =(\ell+1)\ell, 
\] 
and where $\bf  {N}$ is a $\mathcal M\times \mathcal M $ matrix with coefficients
\bel{N}
[N_{ij}]_{\ell m,l'm'}^{kk'}=\big\langle\mathcal V_G^0 \mathcal L_jY_{\ell'm'}^{k'},  Y_{\ell m}^{ki}\big\rangle_{\Gamma_i,t}.
\ee
The right hand side  ${\bf F}\in \mathbb R^{\mathcal M}$ is given by its coefficients:
\bel{bfF}
[F_i]_{\ell m}^k=\langle \mathcal V_G^0
\Sigma,Y_{\ell m}^{ki} \rangle_{\Gamma_i,t}. 
\ee
To derive the entries of the matrix \eqref{N}, recall the spectral results in Lemma~\ref{H&N} so that we have on $\Gamma_{j}$
\begin{align*}
{\SLO_j}^{-1}Y_{\ell'm'}^{k'j}&= {1\over r_j \tau^{k'j}_{\SLO,\ell'}}Y_{\ell'm'}^{k'j}, 
&{\SLO_j^0 }^{-1} Y_{\ell'm'}^{k'j} & ={1\over  r_j\tau^{k'0}_{\SLO,\ell'}}Y_{\ell'm'}^{k'j}, 
\\ 
\DLO^*_j Y_{\ell'm'}^{k'j}&=  \tau^{k'j}_{\DLO^*,\ell'}Y_{\ell'm'}^{k'j}, 
&\DLO^{*0}_j Y_{\ell'm'}^{k'j}&= \tau^{k'0}_{\DLO^*,\ell'}Y_{\ell'm'}^{k'j},
\end{align*}
where $\tau^{k'j}_{\SLO,\ell'}, \tau^{k'j}_{\DLO^*,\ell'}$ are eigenvalues of the single and double layer  boundary operator with Lam\'e constants $\mu_j, \lambda_j$ given by  \eqref{aon}, \eqref{adon} respectively. Hence, we have 
\[
\mathcal L_ j Y_{\ell'm'}^{k'j} = {1\over r_j} C_{j\ell'k'}  Y_{\ell'm'}^{k'j}, 
\]
where the constant $ C_{j\ell'k'}$ reads 
\[
 C_{j\ell'k'}= \begin{cases}
{1\over 2}\Big({1\over  \tau^{k'0}_{\DLO^*,\ell'}} -{ 1\over  \tau^{k'j}_{\SLO,l'} }\Big)+  \Big({\tau^{k'0}_{\DLO^*,\ell'}  \over \tau^{k'0}_{\SLO,\ell'}} -{\tau^{k'j}_{\DLO^*,\ell'}  \over \tau^{k'j}_{\SLO,\ell'} }\Big)& j\in J_1, \\
{1/2 +s_i\tau^{k'0}_{\DLO^*,\ell'} \over \tau^{k'0}_{\SLO,\ell'}}&j\in J_2.
 \end{cases} 
\]
Recall that $s_i$ denotes the parameter defined by \eqref{si}. 
Further, by \eqref{ain}, \eqref{aout}, 
\[
(\SL_j^0Y_{\ell'm'} ^{k'})(x)= \begin{cases}
r_j\Big[\underline {Y_{\ell'm'}}\Big({x-x_j\over|x-x_j|}\Big) A_{\SLO, \ell'}^{in}\Big({x-x_j\over r_j}\Big)\Big]_{k'} & |x-x_j|\leq r_j, \\
r_j\Big[\underline {Y_{\ell'm'}}\Big({x-x_j\over|x-x_j|}\Big)A_{\SLO, \ell'}^{out}\Big({x-x_j\over r_j}\Big) \Big]_{k'} & |x-x_j| > r_j, \end{cases} 
\]
where  $[\cdot]_{k'}$ denotes the $k'$-th column of the obtained matrix.  Hence, the coefficient $[N_{ij}]_{\ell m,\ell'm'}^{kk'}$ of the matrix ${\bf N}$ \eqref{N} reads 
\[
\aligned 
& [N_{ij}]_{\ell m,\ell'm'}^{kk'}=  \langle \mathcal V_G^0 \mathcal L_j Y_{\ell'm'}^{k'}, Y_{\ell m}^{ki}\rangle_{\Gamma_i, t}
\\  
&= C_{j\ell'k'}\sum\limits_{t=1}^{T_g}w_tY_{\ell m}^{ki}(s_t)  \SL_j ^0Y_{\ell'm'}^{k'}(x_i+r_i s_t)
= C_{j\ell'k'}\sum\limits_{t=1}^{T_g}w_t Y_{\ell m}^{ki}(s_t) \Big[\underline {Y_{\ell'm'}}\Big({y_{ij}^t\over|y_{ij}^t|}\Big) A_{\SLO, \ell'}^{f(j)}\Big({y_{ij}^t\over r_j}\Big)\Big]_{k'}, 
\endaligned 
\]
where $y_{ij}^t=x_i+r_is_t-x_j  $ and $f(j)$ takes the value 
\[
f(j) =\begin{cases}
in & j =M+1, \\
out & else. 
\end{cases}
\]
In particular, when $i=j$, we use \eqref{orthogonal} for the exact value of the inner product and obtain: 
\[
[N_{ii}]_{\ell m,\ell'm'}^{kk'}=  \sum\limits_{l'=0}^N\sum\limits_{m'=-\ell'}^{\ell'} C_{i\ell'k'} \tau_ {\SLO,\ell} ^{k',0} \langle Y_{\ell m}^{ki},  Y_{\ell'm'}^{k'i}  \rangle_{\Gamma_i}. 
\]
Finally, the right-hand side vector ${\bf F}$ with coefficient $[F_i]_{\ell m}^k$ is given by:
\[\aligned 
 {[F_i]}_{\ell m}^{k}=&\sum\limits_{\ell'=0}^N\sum\limits_{m'=-\ell'}^{\ell'}\sum\limits_{k' =1}^3\Bigg(   r_i  [\Sigma_i ]_{\ell'm'}^{k'}  \tau^{k'0}_{\SLO,\ell'}  \langle Y_{\ell'm'}^{k'i},  Y_{\ell m}^{ki} \rangle_{\Gamma_i} \\& + \sum\limits_{\mathclap{\substack{j \in J_1\cup J_2\\ j\neq  i }}}r_ j \sum\limits_{t=1}^{T_g} [\Sigma_j]_{\ell'm'}^{k'}w_tY_{\ell m}^{ki}(s_t)  \Big[\underline {Y_{\ell'm'}}\Big({y_{ij}^t\over|y_{ij}^t|}\Big) A_{\SLO, \ell'}^{f(j)}\Big({y_{ij}^t\over r_j}\Big)\Big]_{k'}\Bigg),~i\in J_1\cup J_2
\endaligned 
\]
where $[\Sigma_i]_{\ell m}^k\in \mathbb R$  is determined by the right-hand side vector $\Sigma$ in \eqref {integral-equation} : 
\[
\Sigma|_{\Gamma_i}=\sum\limits_{\ell=0}^N\sum\limits_{m=-\ell}^\ell\sum\limits_{k=1}^3 [\Sigma_i]_{\ell m}^k Y_{\ell m}^{ki}(x). 
\]
\begin{remark}
A similar physical model called ``Finite Cluster Model'' was considered in~\cite{Urklain} in where an algebraic formulation is derived through the use of M2L-operators (using the fast multipole method terminology). 
However, with the jump relations given in~\eqref{jump-relation}, the algebraic formulation of the  ``Finite Cluster Model'' can be proven to be equivalent to the discrete integral formulation~\eqref{Galerkin} presented above. 

It shall be noted that the mathematical framework introduced here through the use of layer potentials and boundary operators in order to derive an integral equation~\eqref{integral-equation} defining the exact solution and and the subsequent introduction of the Galerkin discretization~\eqref{Galerkin} allows a mathematical analysis which will be subject of an upcoming work.
\end{remark}
\section{Numerical tests}
\label{sec:8}

For all following computations, we chose the number of Lebedev integration points $T_g$ such that, for given $N$, products of two scalar spherical harmonics of maximal degree $N$, thus spherical harmonics of degree $2N$, are integrated exactly.  
The number of points can then be extracted from Table~\ref{tab:Lebedev}.

\subsection{One sphere model}
We start with a simple model involving only one single sphere whose solution can be computed analytically in order to assess the convergence  of the method in this simple setting. 
For simplicity,  let $\mathbb S^2 \subset  \mathbb R^3$ be the unit sphere on which a stress tensor $
\sigma\in \Hh(\mathbb S^2)^3$ is imposed and let the Lam\'e constants be $\mu_0=\lambda_0=1$.  Let ${\bf n}$ be the outward pointing normal vector with respect to the unit sphere $\mathbb S^2$. The solution $u\in \Vn{B}$ to  the problem 
\[
\begin{array}{r}
-\mathop {\rm div}\big(2  e(u)+   \mathop {\rm Tr }e(u) \mathop {\rm Id}\big) = 0, \quad \mbox{in }  B , \\
 {\mathcal T_{\bf n} } ^- u  =\sigma \quad \mbox {on } \mathbb S^2, 
\end{array}
\]
reads
\bel{integral-u}
u(x)= \big(\SL (1+\DLO^*)^{-1}\sigma \big)(x), \quad \forall x\in\mathbb R^3, 
\ee
with  $\SL$ being the single layer potential \eqref{single-layer}  and $\DLO^*$ the adjoint of the double layer boundary operator \eqref{double-boundary*}.  For the given tensor $\sigma$, if there exists an integer $\ell_{\mathop {\rm ex}}$ such that we can expand~$\sigma$ by means of vector spherical harmonics up to order $\ell_{\mathop {\rm ex}}$:
\[
\sigma(x) = \sum\limits_{\ell=1}^{\ell_{\mathop {\rm ex}} }\sum\limits_{m=-\ell}^{\ell}\sum\limits_{k=1}^3 [\Sigma]_{\ell m}^k Y_{\ell m}^k(x),
\]
then the exact  solution restricted to the sphere $\Lambda_{\mathop {\rm ex}}  = u|_{\mathbb S^2}$ is  given explicitly by 
\bel{lambda_integral}
\Lambda_{\mathop {\rm ex}} =  \sum\limits_{\ell=1}^{\ell_{\mathop {\rm ex}} }\sum\limits_{m=-\ell}^{\ell}\sum\limits_{k=1}^3 {\tau_{\SLO,\ell}^k \over {1\over 2} +  \tau_{\DLO^*,\ell}^k} [\Sigma]_{\ell m}^k Y_{\ell m}^k(x),
\ee
where $\tau_{\SLO,\ell}^k, \tau_{\DLO^*,\ell}^k$ are the eigenvalues of the single layer boundary operator and the adjoint double layer boundary operator given by \eqref{aon} and \eqref{adon} resp.  They only concern  in computing \eqref{lambda_integral} is that the denominator tends to zero if $\tau_{\DLO^*,\ell}^k$ approaches $-1/2$. Recall that according to Remark~\ref{-1/2}, the only possible eigenvectors of $\DLO^*$ with the eigenvalue $-1/2$ are $W_{1,-1}, W_{1,0}, W_{1,1}$. According to Appendix A, we see that they are constant and parallel to the cartesian basis ${\bf e}_i, i=1,2,3$.  To ensure that \eqref{lambda_integral} is well defined and these modes avoided, we simply impose that 
\[
\int_{\mathbb S^2} \sigma=0. 
\] 
We consider the following four cases: 
\bei
\item  Case 1. $\sigma = -(x, y, z)^\top$. 
\item Case 2. $\sigma = -(x,0,0)^\top$. 
\item Case 3. $\sigma = -(x^7, y^7, z^7)^\top$. 
\item Case 4. $\sigma = -\big(\sin (2\pi x), \sin (2\pi y), \sin (2\pi z)\big)^\top$.
\eei

Table~\ref{one-sphere-model} lists the $L^2$ norm of the numerical solution on the unit sphere $||\Lambda_s||_{L^2}$ in each case with different degrees of vector spherical harmonics and the relative error is defined by 
\bel{Re}
{\tt Re} ={||\Lambda_s- \Lambda_{{\rm ex}}||_{L^2} \over  ||\Lambda_{{\rm ex}}||_{L^2}}. 
\ee

 The exact solutions  $\Lambda_{{\rm ex}}$  in the first three cases are exactly computed by the  \eqref{integral-u}, \eqref{lambda_integral}  while the in the last cases, the ``exact'' solution is obtained by taking a large enough $\ell_{\mathop {\rm ex}}$ (in this case $\ell_{\mathop {\rm ex}}$ = 50). 

\begin{table*}
 \centering
\ra{0.8}
\scalebox{0.7}{
\begin{tabular}{@{}crrrrrrrr@{}}\toprule
$N$ & Case 1 && Case 2 && Case 3 && Case 4
\\
\midrule
2 & 0 & & 0  & & 1.985e-01& & 5.375e-01 &\\
5 & 0 & & 0  & & 4.020e-03& & 6.797e-02 &\\
8 & 0 & & 0  & & 4.796e-09& & 1.370e-04 &\\
11 & 0 & & 0 & & 0 & & 7.892e-13&\\
14 & 0 & & 0 & & 0 & &7.097e-13&\\
\bottomrule
\end{tabular}}
\caption{Relative error ${\tt Re}$ of the approximation to the one-sphere model. \label{one-sphere-model}}
\end{table*}

\subsection{Convergence with respect to the degree $N$}
We study now the convergence of the error measured in the $L^2$ norm with respect to the degree $N$ of the vector spherical harmonics. 
Using the notation introduced in Section~\ref{sec:case-study}, we test a model with $M=2$ and 
\[
	J_1=\{1\}, 	\quad J_2=\{2,3\}. 
\]
Let $\Gamma_1$ be the  sphere  centered at $( 1, 0, 0)$ respectively with radius $0.1$ and $\Gamma_2$  centered at $(-1,0,0)$ with radius $0.1$ while $\Gamma_3$ is centered at $(0,0,0)$ with radius $2$. 
The  inclusion $\Omega_1$  is filled with a medium represented by the Lam\'e constants $\mu_1=10, \lambda_1=10$  while the background domain $\Omega_0$  uses $\mu_0=1, \lambda_0=1$ as Lam\'e parameters. 

The interface condition $\llbracket \mathcal T u \rrbracket= 0$ is imposed on $\Gamma_1$ while the spheres  $\Gamma_2, \Gamma_3$  are subjected to a stress tensor $\mathcal T_{{\bf n}_2}^+ u= -\sigma_2$ respectively $\mathcal T_{{\bf n}_3}^+ u=\sigma_3$.  Table~\ref{Geo-co-N}   illustrates the parameters of the above geometry configuration.

\begin{table}
\centering

\scalebox{0.7}{
\begin{tabular}{ ccccccccc}
Set &Sphere &Center &Radius &Lam\'e constants & Stress tensor & Transmission
\\  \hline
$J_1$&$\Gamma_1  $ &   $(1,0,0)$  &0.1 & $\mu_1=10, \lambda_1=10$&---------& $\llbracket \mathcal T u \rrbracket= 0$  \\ 
$J_2$&$\Gamma_2  $ &   $(-1,0,0)$  &0.1 &--------- &$\mathcal T_{{\bf n}_2}^+ u = -\sigma_2$&--------- \\ 
$J_2$&$\Gamma_3  $ &   $(0,0,0)$  &2  &$\mu_0=1, \lambda_0=1$ &  $\mathcal T_{{\bf n}_3}^+ u = \sigma_3$&---------\\ 
 \hline
\end{tabular}
}
\caption{Geometric configuration of the case study for convergence with respect to the degree of vector spherical harmonics $N$ involving three spheres. \label{Geo-co-N} } 
\end{table}

We now test two cases to see the relation between the relative error and the degree of the spherical harmonics for different kinds of imposed stress tensors  $\sigma_2,\sigma_3$: 
\begin{enumerate}
 \item The two stress tensors are set to be smooth functions such that 
\begin{equation}
\label{eq:TC1}
 \begin{array}{l}
 \mathcal T_{{\bf n}_2}^+ u
  = -\sigma_2 
  = 10\big(\sin (2\pi (x+1)), \sin(2\pi (y+1)),  \sin(2\pi (z+1)) \big )^\top, 
 \\ 
 \mathcal T_{{\bf n}_3}^+ u
 = \sigma_3 
 = -2 \big(\sin (2\pi x), \sin(2\pi y),  \sin(2\pi z)\big)^\top.
\end{array}
\end{equation}
\item The stress tensors are set to be piecewise smooth such that 
\begin{equation}
\label{eq:TC2a}
\mathcal T_{{\bf n}_2}^+ u 
= - \sigma_2
=
\begin{cases} 
(0.2,0,0)^\top &x\geq 0, \\
-(0.2,0,0)^\top &x<0,
\end{cases}
\end{equation}

and 
\begin{equation}
\label{eq:TC2b}
\mathcal T_{{\bf n}_3}^+ u 
= \sigma_3
=
\begin{cases} 
(1,0,0)^\top &x\geq 0, \\
-(1,0,0)^\top &x<0.
\end{cases}
\end{equation}
\end{enumerate}

We compute the ``exact'' solution to the problem with a large degree of vector spherical harmonics ($N_{\rm ex}=50$) for both cases. 
In the Figure~\ref{convergence}, we illustrate  the log of the relative error \eqref{Re} with respect to the degree $N$ of spherical harmonics of the two tests above. 
We observe exponential convergence in the first case where the given stress tensor is regular. 
In the second case, the situation is less clear as an initial pre-asymptotic is followed by a very fast convergence and the asymptotic regime is not yet reached, but the absolute error is already very small.

\begin{figure}
\begin{minipage}[t]{0.2\textwidth}\centering
\includegraphics[trim=1.5in 3in 6in 3in,width=0.25\textwidth]{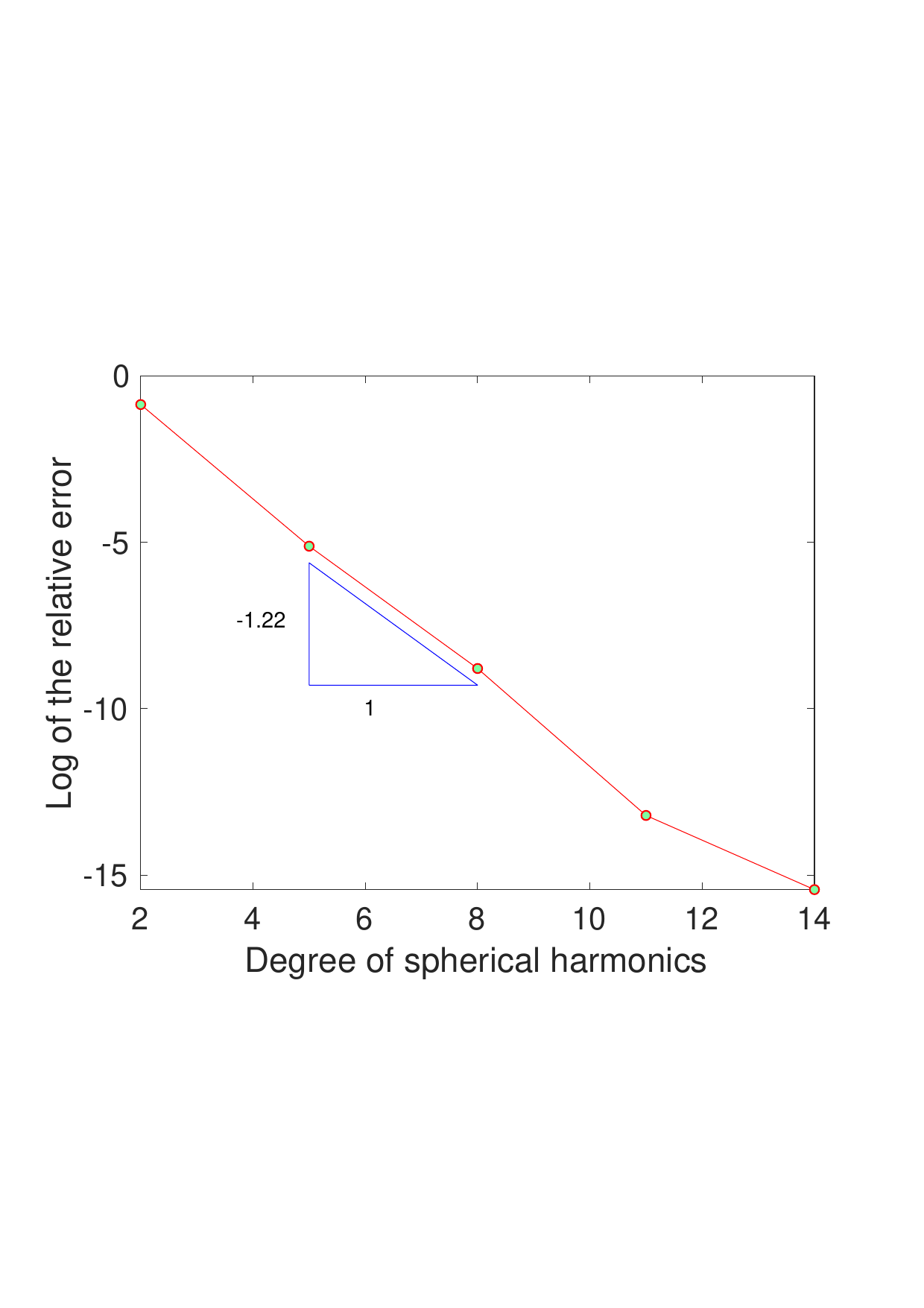}
\end{minipage}\hspace{0.28\textwidth}
\begin{minipage}[t]{0.2\textwidth}\centering
\includegraphics[trim=1.5in 2.5in 6in 3in,width=0.32\textwidth]{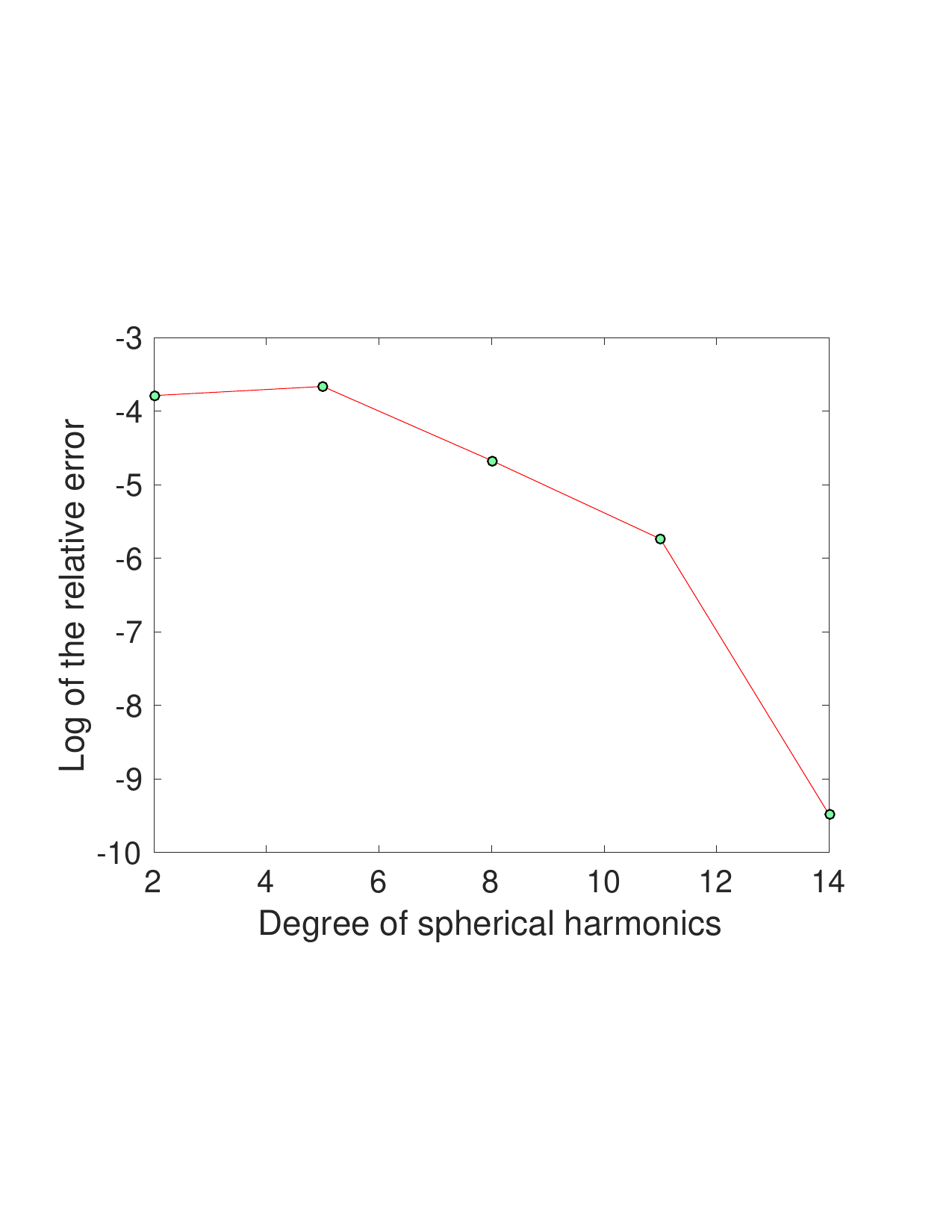}
\end{minipage}
\caption{
The $L^2$ error of the approximation with respect to the degree of spherical harmonics for the test cases \eqref{eq:TC1} (left) and \eqref{eq:TC2a}--\eqref{eq:TC2b} (right).
\label{convergence}}
\end{figure}

\subsection{Computational cost}
\label{sec:Cost}
Next, we study the computational cost of our numerical method by  considering an ``embedded model'' with $M$  inclusions by increasing the value of  $M$.  We do the following test with Matlab on an iMac with a 2,7 GHz Intel Core i5 processor. 

We consider a case  where  a stress tensor $  {\mathcal T^0{{\bf n}_{M+1}}}^+ u  = -{1\over R} (x,y,z)^\top$ is imposed  on a origin-centered  sphere   with radius $R$, denoted by $\mathbb S^2_R$.  Inclusions are taken to be all the spheres  with radii $0.1$, centered on a cubic lattice $\mathbb Z^3$ and which are contained in $\mathbb S^2_R$. We increase the number of inclusions $M$ by increasing the value of the radius $R$ of the big sphere. Table~\ref{number-radius} lists the number of spheres with respect to the radius $R$ that grows of course cubically. 

\begin{table}
\centering
\scalebox{1}{
\begin{tabular}{ccccccc}
 \hline
Radius of the big sphere & 1  & 2&3 & 4&5 \\ 
Number of total spheres & 2  & 28&94 & 252&486\\
 \hline
\end{tabular}}
\caption{Number of spheres w.r.t the radius $R$\label{number-radius} } 
\end{table}

 We fill  each small inclusion with a medium associated with the Lam\'e constants $\mu_i=10, \lambda_i=10$, $i=1,..., M$ and take the transmission condition $\llbracket \mathcal T u\rrbracket=0$ on each embedded sphere. Further, the Lam\'e constants of the background domain are fixed to be $\mu_0=1, \lambda_0=1$.   The degree of the vector spherical harmonics is chosen to be $N=3$. Further, we stop the iterative solver of the linear system when the residual is smaller than $10^{-6}$. Figure~\ref{illustration-2} illustrates the computed elastic deformation of the model computed when $R=3$.   The colorcode represents the modulus of the displacement. 
 \begin{figure}
\centering 
\includegraphics[height = 2.5in]{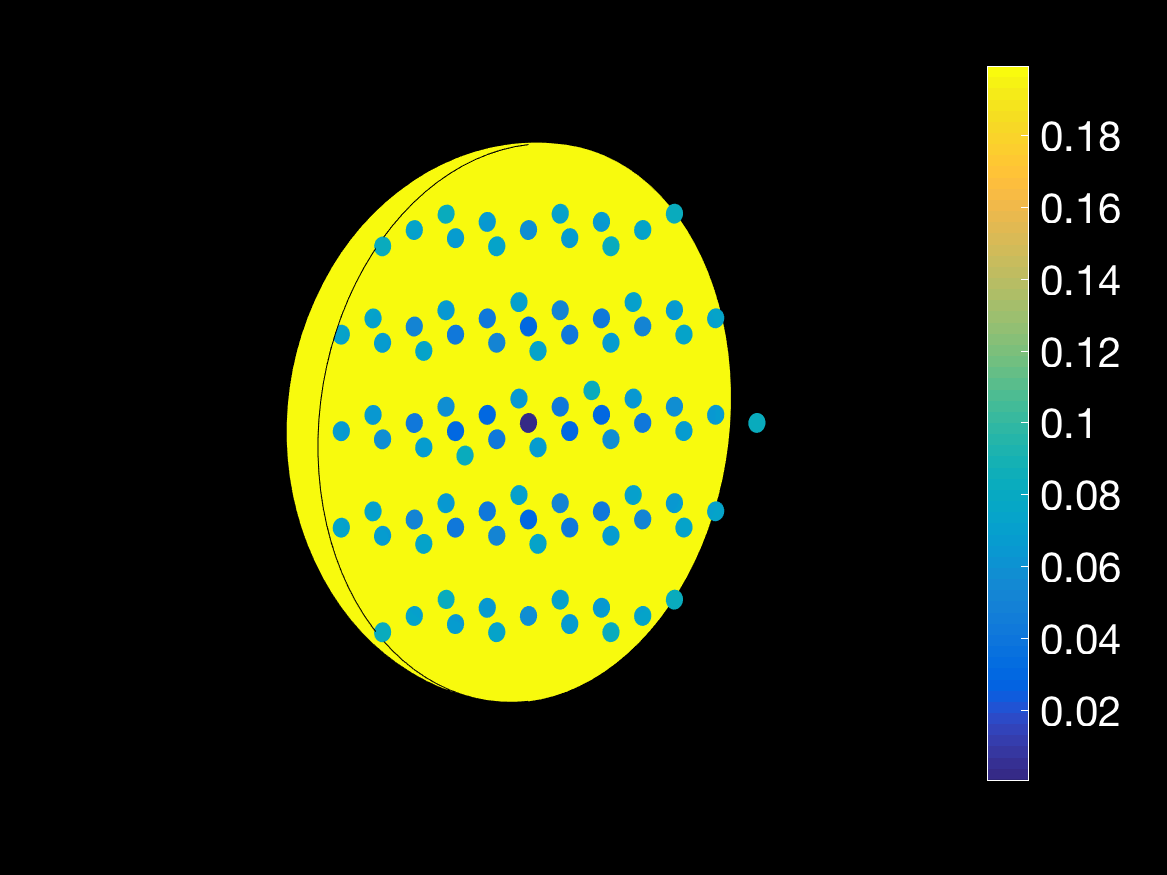}
\caption{The elastic deformation of  the embedded model when $R=3$. The colorcode represents the modulus of the displacement. 
 \label{illustration-2}}
\end{figure}

We report the result of the computational time in Figure~\ref{numberofshperes} which illustrates that the computational cost with respect to the number of spheres grows as $O(M^2)$. This is the normal scaling for an integral equation involving $M$ spheres, whose iterative solver requires a number of iterations that is independent of $M$ which we observe. 
 
\begin{figure}
\centering 
\includegraphics[trim=2in 3in 2in 2.8in,width=0.3\textwidth]{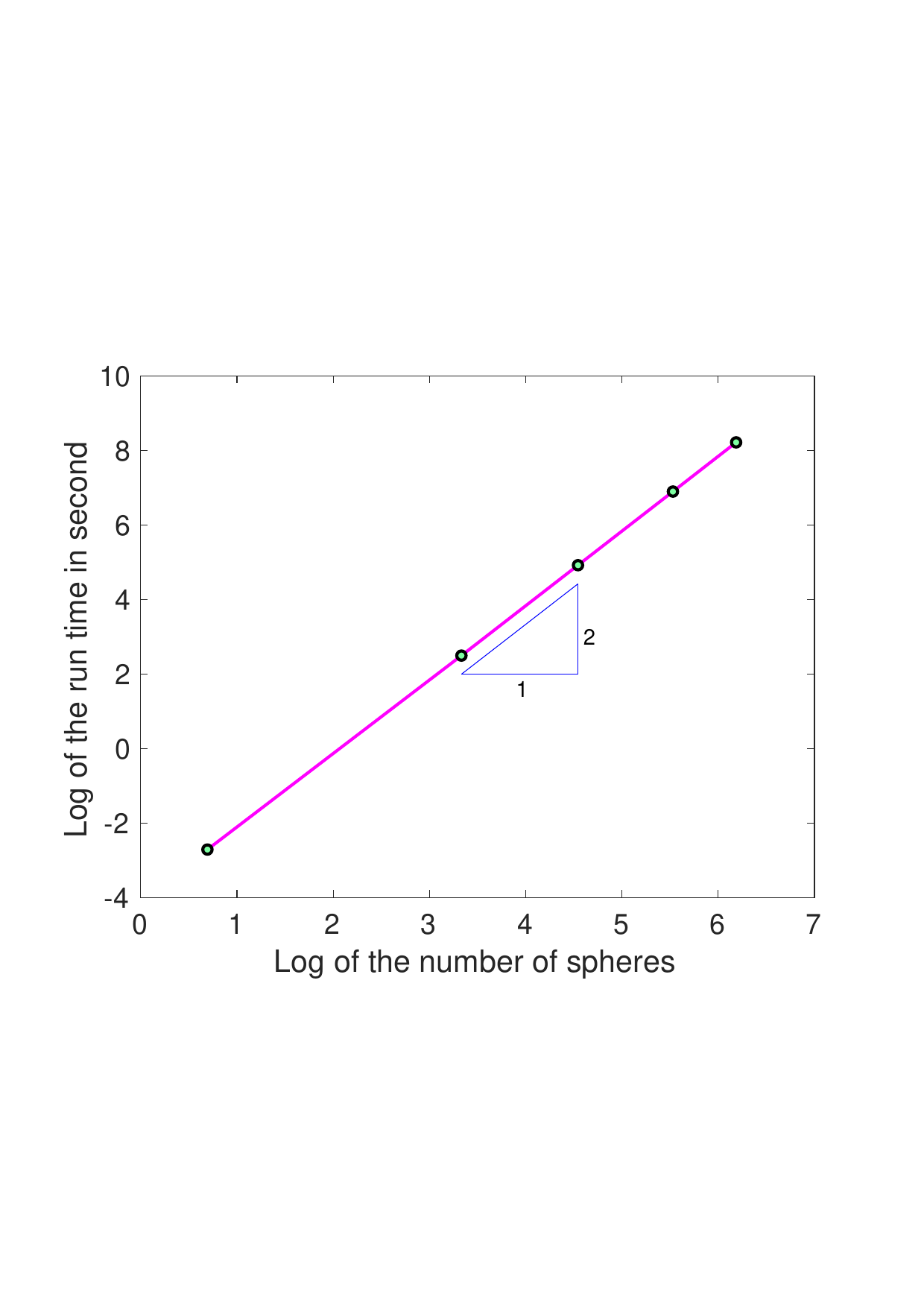}
\caption{The run time in second with respect to the number $M$ of spheres (log-log scale). 
 \label{numberofshperes}}
\end{figure}

\subsection{The effect of an inclusion}
We now consider the unit ball $B_1$ which contains an additional inclusion $\Omega_1$ in form of a sphere centered at the origin with radius $0.5$. 
We study how the displacement on the unit sphere $\mathbb S^2$ is influenced by the compressibility of the small inclusions $\Omega_1$.  We will use the Poisson's ratio as the parameter defined by  \eqref{Poisson-ratio} describing the compressibility of a substance.

Let the  stress tensor $- (x,y,z)^\top$ be imposed on the unit sphere $\mathbb S^2$ and fix the shear modulus  of the exterior shell $\Omega_0 = B_1\backslash\overline \Omega_1$ to be  $\mu_0=1$ and the shear modulus $\mu_1=1$ for the inclusion $\Omega_1$.  
We test several cases where the the exterior shell and the inclusion are associated to different Poisson's ratio  $\nu_0, \nu_1$. Recall that $\nu_0, \nu_1\in (-1,1/2)$ according to the definition, we have the limit values of the first Lam\'e parameter $\lambda_1$:
\[
\lambda_1\xrightarrow[\nu_1\to -1]{}-{2\over 3}, \quad \lambda_1\xrightarrow[\nu_1\to {1\over 2}]{}\infty. 
\]
In Figure~\ref{fig:poisson-ratio}, we plot the $L^2$ norm of the displacement on the unit sphere by letting the Poisson's ratio  $\nu_1$  vary in $[-1, 0.4998]$ with  different given values of Poisson's ratio $\nu_0$ of the background domain $\Omega_0$. In Figure~\ref{fig:illustration-poisson}, we give two solutions with different Poisson's ratios: the left solution  is obtained by setting $\nu_1=0.4995$ while the other is obtained by setting $\nu=-1$, both embedded into a background domain with $\nu_0=1/6$.

\begin{figure}
\centering
\includegraphics[trim= 0cm 7cm 0 7cm,clip, height = 2in]{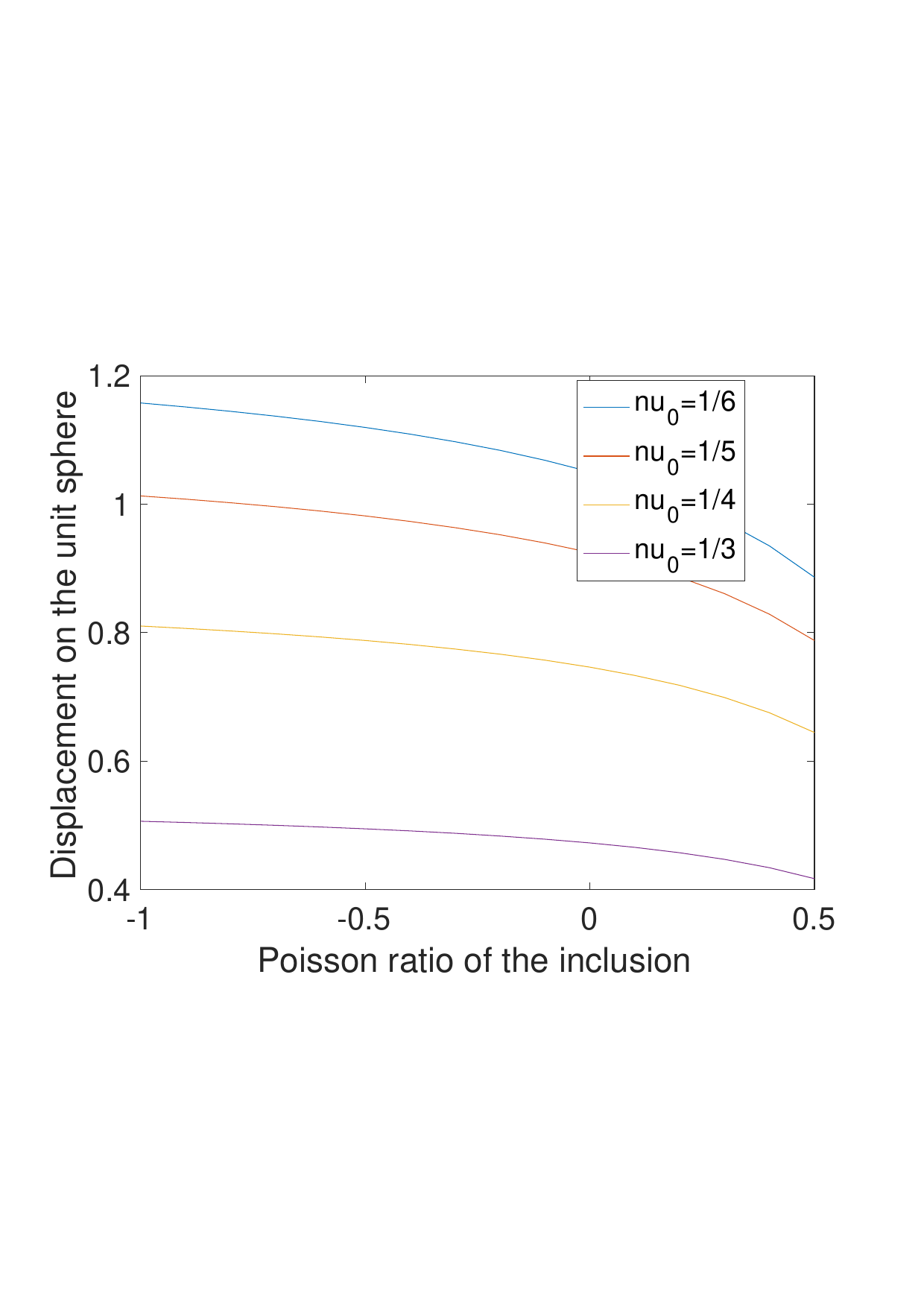} 
\caption{The  $L^2$ norm of the solution on the unit sphere with respect to the Poisson's ratio $\nu_1$ of the inclusion. Each curve is obtained by a given background Poisson's ratio $\nu_0$ specified by the legend. \label{fig:poisson-ratio}}
\end{figure}

\begin{figure}
\centering
\includegraphics[height = 2.5in]{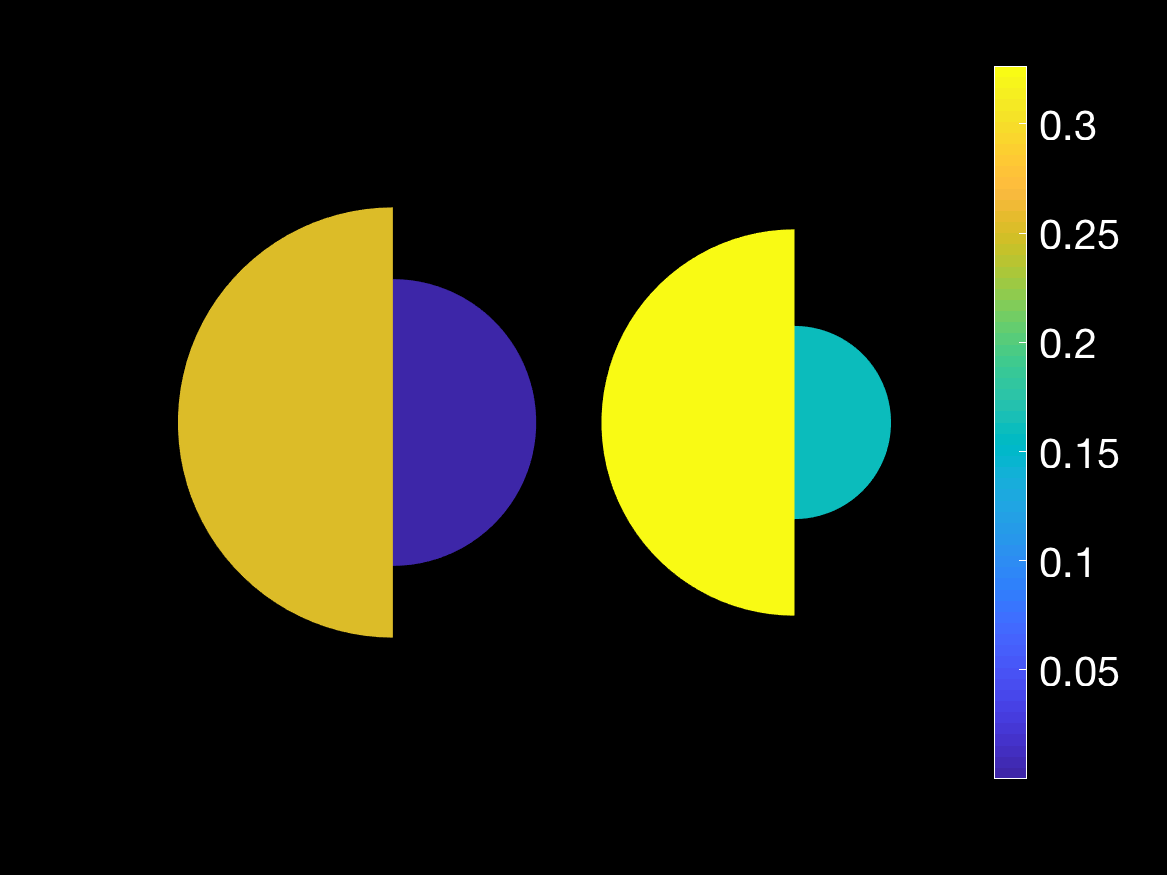} 
\caption{Two solutions with the Poisson's ratio  $\nu_0={1\over 6}$. The left solution is obtained for $\nu_1=0.4998$ while the left for $\nu_1=-1$. The colorcode represents the modulus of the displacement.\label{fig:illustration-poisson}}
\end{figure}

\section{Conclusion}
In this article, we have discussed the layer potentials and their corresponding integral operators on arbitrary bounded domains with Lipschitz boundary in the context of isotropic elasticity. 
We proved jump relations of layer potentials and the invertibility of the single layer boundary operator. In the particular case where the body is  a unit ball, we present spectral properties of the boundary operators on the base of the vector spherical harmonics. 
We then derived a second-kind integral equation for isotropic elastic materials with spherical inclusions that was then discretized by employing the vector spherical harmonics as basis functions and exploiting the spectral properties to enhance efficiency of the discretization. 
In the last part, we effect some numerical tests to asses the properties of the method: the accuracy with respect to the degree of the vector spherical harmonics and the complexity of the computational cost with respect to the number of spherical inclusions.  
We also used the method to explore how the deformation of the elastic material is effected by the value of the Poisson's ratio.


\section{Aknowledgement}
Benjamin Stamm acknowledges the funding from the German Academic Exchange Service (DAAD) from funds of the \^a Bundesministeriums f\"ur Bildung und Forschung\^a (BMBF) for the project Aa-Par-T (Project-ID 57317909). Shuyang Xiang acknowledges the funding from the PICS-CNRS as well as the PHC PROCOPE 2017 (Project N37855ZK).


\section*{Appendix A: computation of the first few vector spherical harmonics}

 We first start considering the table of vector spherical harmonics up to the second order as listed below:
 \begin{align*}
 	\ell=0: & \qquad Y_{0,0} = {1\over 2} \sqrt{1\over \pi},
	\\
 	\ell=1: & \qquad Y_{1,-1} = \sqrt {3\over 4\pi}y,  
 	& Y_{1,0} &= \sqrt {3\over 4\pi}z,  
 	& Y_{1,1} &= \sqrt {3\over 4\pi}x,
	\\
 	\ell=2: & \qquad Y_{2,-2}={1\over 2}  \sqrt{15 \over \pi}xy, 
 	& Y_{2,-1}&= {1\over 2}  \sqrt{15 \over \pi}yz,  
 	& Y_{2,0}& ={1\over 4} \sqrt {5\over \pi}(-x^2-y^2+2z^2), 
 	\\
 	& \phantom{\qquad } Y_{2,1} ={1\over 2}  \sqrt{15 \over \pi} xz,  
 	& Y_{2,2}&= {1\over 4}  \sqrt{15 \over \pi}(x^2-y^2). 
 \end{align*}
This gives first the obvious result that 
\[
	\ell=0: \qquad V_{0,0}= -{1\over 2} \sqrt{1\over \pi} (x,y,z)^\top \qquad\mbox{and}\qquad W_{00}=X_{00}=0.
\]
Using the definition of the surface gradient \eqref{surface}, we obtain 
\begin{align*}
\ell=1: &\qquad 
&\nabla_{\! \rm s} Y_{1,-1}&=\sqrt {3\over 4\pi} \big((0, 1,0)-y(x,y,z)\big)^\top,
\\
& \phantom{\qquad }
&\nabla_{\! \rm s} Y_{1,0}&= \sqrt {3\over 4\pi} \big((0, 0,1)-z(x,y,z)\big)^\top,
\\  
& \phantom{\qquad }
&\nabla_{\! \rm s} Y_{1,1}&= \sqrt {3\over 4\pi} \big((1,0,0)-x(x,y,z)\big)^\top,
\\
\ell=2: &\qquad 
&\nabla_{\! \rm s}  Y_{2,-2}&={1\over 2}  \sqrt{15 \over \pi}\big((y, x,0)-2xy(x,y,z)\big)^\top, 
\\
& \phantom{\qquad }
&\nabla_{\! \rm s} Y_{2,-1}&= {1\over 2}  \sqrt{15 \over \pi}\big((0,z,y)-2yz(x,y,z)\big)^\top,
\\
& \phantom{\qquad }
&\nabla_{\! \rm s} Y_{2,0}&={1\over 2} \sqrt {5\over \pi}\big((-x,-y,2z)-(-x^2-y^2+2z^2 )(x,y,z)\big)^\top,
\\
& \phantom{\qquad }
&\nabla_{\! \rm s} Y_{2,1}&={1\over 2}  \sqrt{15 \over \pi} \big((z,0,x)-2xz(x,y, z)\big)^\top, 
\\
& \phantom{\qquad }
&\nabla_{\! \rm s} Y_{2,2}&= {1\over 2 }  \sqrt{15 \over \pi} \big((x, -y,0)-(x^2-y^2)(x,y,z)\big)^\top. 
\end{align*}
The spherical harmonics  $V_{\ell m}$ up to  order 2 are then given as follows 
\begin{align*}
\ell=0: &\qquad 
&V_{0,0}&=  -{1\over 2} \sqrt{1\over \pi} (x,y,z)^\top,
\\
\ell=1: &\qquad 
&V_{1,-1}&=\sqrt {3\over 4\pi} \big((0, 1,0)-2y(x,y,z)\big)^\top, 
\\
& \phantom{\qquad }
&V_{1,0}&= \sqrt {3\over 4\pi} \big((0, 0,1)-2 z(x,y,z)\big)^\top,
\\
& \phantom{\qquad }
&V_{1,1}&= \sqrt {3\over 4\pi} \big((1,0,0)-2x(x,y,z)\big)^\top, 
\\
\ell=2: &\qquad 
&V_{2,-2}&={1\over 2}  \sqrt{15 \over \pi}\big((y, x,0)-5xy(x,y,z)\big)^\top,
\\
& \phantom{\qquad }
&V_{2,-1}&= {1\over 2}  \sqrt{15 \over \pi}\big((0,z,y)-5yz(x,y,z)\big)^\top,
\\
& \phantom{\qquad }
&V_{2,0}&={1\over 2} \sqrt {5\over \pi}\big((-x,-y,2z)-{5\over 2} (-x^2-y^2+2z^2 )(x,y,z)\big)^\top, ~ 
\\
& \phantom{\qquad }
&V_{2,1}&={1\over 2}  \sqrt{15 \over \pi} \big((z,0,x)-5xz(x,y, z)\big)^\top, 
\\
& \phantom{\qquad }
&V_{2,2}&= {1\over 2 }  \sqrt{15 \over \pi} \big((x, -y,0)-{5\over 2}  (x^2-y^2)(x,y,z)\big)^\top. 
\end{align*}
The spherical harmonics  $W_{\ell m}$ up to  order 2 are given by 
\begin{align*}
\ell=0: &\qquad 
& W_{0,0}&=0, 
\\
\ell=1: &\qquad 
& W_{1,-1}&=\sqrt {3\over 4\pi} (0, 1,0)^\top,
&W_{1,0}&= \sqrt {3\over 4\pi}(0, 0,1)^\top,
&W_{1,1}&= \sqrt {3\over 4\pi} (1,0,0)^\top,
\\ 
\ell=2: &\qquad 
& W_{2,-2} &={1\over 2}  \sqrt{15 \over \pi}(y, x,0)^\top,
&W_{2,-1}&= {1\over 2}  \sqrt{15 \over \pi}(0,z,y)^\top,
&W_{2,0}&={1\over 2} \sqrt {5\over \pi}(-x,-y,2z)^\top, 
\\  
& \phantom{\qquad }
& W_{2,1} &={1\over 2}  \sqrt{15 \over \pi} (z,0,x)^\top, 
&W_{2,2}&= {1\over 2 }  \sqrt{15 \over \pi} (x, -y,0)^\top. 
\end{align*}
And finally, the spherical harmonics $X_{\ell m}$ up to order 2 are given by 
\begin{align*}
\ell=0: &\qquad 
&X_{0,0}&=0,
\\
\ell=1: &\qquad 
&X_{1,-1}&=\sqrt {3\over 4\pi} (-z, 0,x)^\top,
&X_{1,0}&= \sqrt {3\over 4\pi}(y, -x,0)^\top,
\\  
& \phantom{\qquad }
&X_{1,1}&= \sqrt {3\over 4\pi} (0,z,-y)^\top,
\\ 
\ell=2: &\qquad 
&X_{2,-2}&={1\over 2}  \sqrt{15 \over \pi}(-xz, yz,x^2-y^2)^\top,
&X_{2,-1}&= {1\over 2}  \sqrt{15 \over \pi}(y^2-z^2,-xy,xz)^\top,
\\  
& \phantom{\qquad }
&X_{2,0}&={1\over 2} \sqrt {5\over \pi}(3yz,-3xz,0)^\top, 
&X_{2,1}&={1\over 2}  \sqrt{15 \over \pi} (xy, z^2-x^2,-yz)^\top, 
\\  
& \phantom{\qquad }
&X_{2,2}&= {1\over 2 }  \sqrt{15 \over \pi} (yz, xz,-2xy)^\top. 
\end{align*}
\section*{Appendix B: Entries of matrices $A_{\DL,\ell}^{in}$ and $A_{\DL,\ell}^{out}$}
The coefficients in $A_{\DL,\ell}^{in}$ and $A_{\DL,\ell}^{out}$ are given as follows: 
\label{App:B}
\begin{align*}
a^{in,\DL,\ell}_{11} &= - { (\ell+2)\big((3\ell+2)\mu+(\ell+1)\lambda\big)\big((3\ell+1)\mu+\ell\lambda\big)\over  ( 2\ell+3)(2\ell+1) ^2\mu (2\mu+\lambda)}, 
\\
a^{in,\DL,\ell}_{21,1} &= -{(\ell+1)(\ell+2)\big((3\ell+2)\mu+(\ell+1)\lambda\big)(\mu+\lambda)\over 2(2\ell+1)^2 (2\mu+ \lambda )},
\\
a^{in,\DL,\ell}_{21,2}  &= {(\ell+1)(\ell+2)\big((3\ell+2)\mu+(\ell+1)\lambda\big)(\mu+\lambda)\over 2(2\ell-1)(2\ell+1)\mu (2\mu+ \lambda )}, 
\\
a^{in,\DL,\ell}_{12} &= -{\ell(\ell-1)(\mu+\lambda)\big((3\ell+1)\mu+\ell\lambda\big)\over (2\ell+3)(2\ell+1)^2\mu (2\mu+\lambda)}, 
\\
a^{in,\DL,\ell}_{22,1} &= -{\ell(\ell-1)(\ell+1)(\mu+\lambda)^2\over2(2\ell+1)^2\mu (2\mu+\lambda)}, 
\\
a^{in,\DL,\ell}_{22,2} &=  {(\ell^3 +24\ell^2-5\ell-8)\mu^2 +2( \ell^3+6\ell^2-2\ell-2)\mu\lambda+ (\ell^3-\ell)\lambda^2\over (2\ell-1)(2\ell+1)\mu (2\mu+\lambda)},
\end{align*}
and 
\begin{align*}
a^{out,\DL,l}_{11,1}&={(\ell+1)\big((\ell^2+10\ell+4)\mu^2+(2\ell^2+8\ell+2)\mu\lambda+(\ell^2+\ell)\lambda\big)\over2(2\ell+1)(2\ell+3) \mu (2\mu+\lambda)},
\\
a^{out,\DL,\ell}_{11,2}&=-{\ell(\ell+1)(\ell+2)(\mu+\lambda) ^2\over 2(2\ell+1)^2\mu(2\mu+\lambda)}, 
\\
a^{out,\DL,\ell}_{21}&={(\ell+1)(\ell+2)(\mu+\lambda)\big((3\ell+2)\mu+(\ell+1)\lambda\big)\over (2\ell-1)(2\ell+1)^2\mu (2\mu+\lambda)}, 
\\
a^{out,\DL,\ell}_{12,1}&= -{ \ell(\ell-1)(\mu+\lambda)\big((3\ell+1)\mu+l\lambda\big)\over2  (2\ell+3)(2\ell+1)\mu (2\mu+\lambda^2)},
\\
a^{out,\DL,\ell}_{12,2}&={ \ell(\ell -1)\big((3\ell+1)\mu+\ell\lambda\big)(\mu+\lambda)\over 2 (2\ell+1)(2\ell+3)\mu (2\mu+\lambda)}, 
\\
a^{out,\DL,\ell}_{22}&={ (\ell -1)\big((3\ell+1)\mu+\ell\lambda\big)\big((3\ell+2)\mu+(\ell+1)\lambda\big)\over (2\ell-1)(2\ell+1)^2\mu (2\mu+\lambda)}. 
\end{align*}

\bibliographystyle{plain}
\bibliography{elasticity}
\end{document}